\newtheorem{thmintro}{Theorem}
\newtheorem{corintro}[thmintro]{Corollary}
\newtheorem{theorem}{Theorem}[section]
\newtheorem{lemma}[theorem]{Lemma}
\newtheorem{prop}[theorem]{Proposition}
\newtheorem{corollary}[theorem]{Corollary}
\theoremstyle{definition}
\newtheorem{remark}[theorem]{Remark}
\newtheorem{example}[theorem]{Example}
\numberwithin{equation}{section}
\newcommand{\BBB}{\mathcal{B}}
\newcommand{\UU}{\mathcal{U}}
\newcommand{\FF}{\mathbb{F}}
\newcommand{\KK}{\mathbb{K}}
\newcommand{\NN}{\mathbb{N}}
\newcommand{\QQ}{\mathbb{Q}}
\newcommand{\ZZ}{\mathbb{Z}}
\newcommand{\g}{\mathfrak{g}}
\newcommand{\h}{\mathfrak{h}}
\newcommand{\n}{\mathfrak{n}}
\newcommand{\G}{\mathfrak{G}}
\newcommand{\U}{\mathfrak{U}}
\newcommand{\llangle}{\langle\!\langle}
\newcommand{\rrangle}{\rangle\!\rangle}
\DeclareMathOperator{\ad}{ad}
\DeclareMathOperator{\charact}{char}
\DeclareMathOperator{\height}{ht}
\DeclareMathOperator{\modulo}{mod}
\DeclareMathOperator{\SL}{SL}
\DeclareMathOperator{\Ker}{Ker}
\newcommand{\inv}{^{-1}}
\newcommand{\co}{\colon\thinspace}
\DeclareMathOperator{\Lie}{L}
\begin{document}

\title[Amalgams of rational unipotent groups]{Amalgams of rational unipotent groups\\ and residual nilpotence}

\author{Pierre-Emmanuel \textsc{Caprace}}
\author{Timoth\'{e}e \textsc{Marquis}}

\thanks{TM is a F.R.S.-FNRS Research associate; PEC and TM are supported in part by the FWO and the F.R.S.-FNRS under the EOS programme (project ID 40007542).}

\begin{abstract}
We provide sufficient conditions for a free amalgamated product of torsionfree nilpotent groups to be residually nilpotent. We also characterise the residual nilpotence of certain higher-dimensional  amalgams of unipotent groups over the rationals (known as KMS groups)  in terms of their defining Cartan matrix. As an application, we give a normal form for the elements of a minimal Kac--Moody group over the rationals. 
\end{abstract}

\maketitle

\begin{flushright}
\begin{minipage}[t]{0.55\linewidth}\itshape\small
\dots le monde naturel pénètre dans le spirituel, lui sert de pâture et concourt ainsi à opérer cet amalgame indéfinissable\dots

\vspace{2mm}

\hfill\upshape \textemdash C. Baudelaire,  \emph{\OE uvres complètes}, 1931.
\end{minipage}
\end{flushright}

\section{Introduction}

\subsection{Residual nilpotence of free amalgamated products}

Following standard terminology introduced by K.~Gr\"unberg \cite{Grunberg57}, given a group property $\mathscr P$, we say that a group $G$ is \textbf{residually $\mathscr P$} if every non-trivial element of $G$ has a non-trivial image in some quotient of $G$ that satisfies the property $\mathscr P$. 
The study of residual properties of graphs of groups has a long and rich history,  originating from Magnus' theorem that free groups are residually torsionfree nilpotent \cite{Magnus35}. It has played an influential role in recent developments related to $3$-manifold groups, see \cite{AFW}. Fairly definitive results have been established characterising  the residual $p$-finitess of graphs of groups (see \cite{Wilkes19}), drawing inspiration from Higman's foundational paper on \textit{amalgams of $p$-groups} \cite{Higman64}. 

Our understanding of residual nilpotence is not nearly as complete, probably    partly by virtue of  the fact that, as opposed to finite $p$-groups, the class of nilpotent groups is not stable under group extensions. Baumslag, who devoted a large number of papers to this subject throughout his career,  wrote in 2010 that ``\textit{proving that a given group is residually torsionfree nilpotent is usually hard}'' (see \cite{Baumslag10}). While we feel that Baumslag's assertion remains true today, it is worth mentioning that an important breakthrough has recently been accomplished by Jaikin-Zapirain (see \cite{Jaikin22}).  Concerning the residual nilpotence of graphs of groups, let us merely recall here that the free product  $C_p * C_q$ of cyclic groups of distinct prime order~$p$ and~$q$ is not residually nilpotent\footnote{Indeed, finitely generated nilpotent groups are residually finite by \cite{Hirsch}. Hence a finitely generated residually nilpotent group is residually \{finite nilpotent\}, but the largest finite nilpotent quotient of $C_p * C_q$ is $C_p \times C_q$.}, showing that residual nilpotence is not stable under forming free products. By contrast, Mal'cev \cite{Mal49} showed that the free product of residually \textit{torsionfree nilpotent} groups is itself residually torsionfree nilpotent. On the other hand, a free \textit{amalgamated} product of finitely generated torsionfree nilpotent groups may fail to be residually nilpotent, or even residually finite, as shown by a remarkable example due to Baumslag \cite{Baumslag62}. To the best of our knowledge, necessary and sufficient conditions on a free amalgamated product $A*_C B$ of torsionfree nilpotent groups to be residually nilpotent (formulated intrinsically in terms of the underlying edge of groups, in the same vein as in \cite{Higman64}), are currently not known. The following result is a contribution to this problem.

\begin{thmintro}	\label{thmintro:free-amalgamated-prod}
The free amalgamated product $A*_C B$ is residually nilpotent as soon as the following conditions hold:
\begin{enumerate}
\item $A$ and $B$ are torsionfree nilpotent;
\item $C$ is a retract\footnote{A subgroup $C$ of a group $H$ is a \textbf{retract} if there is a homomorphism $\varphi \colon H \to C$ whose restriction to $C$ is the identity. In other words, $H$ splits as the semi-direct product $H \cong \Ker(\varphi) \rtimes C$.}
 of both $A$ and $B$;
\item $C$ is abelian.
\end{enumerate}
\end{thmintro}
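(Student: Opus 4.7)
I would first realize $G := A *_C B$ as a semidirect product. The retractions $\pi_A \colon A \twoheadrightarrow C$ and $\pi_B \colon B \twoheadrightarrow C$ agree on $C$, so the universal property of the amalgam yields a common retraction $\pi \colon G \twoheadrightarrow C$. Writing $A = N_A \rtimes C$ and $B = N_B \rtimes C$ with $N_A := \Ker(\pi_A)$ and $N_B := \Ker(\pi_B)$, a direct comparison of universal properties gives
\[
G \;\cong\; (N_A * N_B) \rtimes C,
\]
where $C$ acts on each free factor through its conjugation action inside $A$ and $B$. Set $K := N_A * N_B = \Ker(\pi)$. Since $N_A$ and $N_B$ are subgroups of torsion-free nilpotent groups, Mal'cev's theorem \cite{Mal49} asserts that $K$ is residually torsion-free nilpotent; in particular $\bigcap_{n \geq 1} \gamma_n(K) = \{1\}$.

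The strategy is then to prove that for each $n \geq 1$ the quotient
\[
G_n \;:=\; G/\gamma_n(K) \;=\; \bigl(K/\gamma_n(K)\bigr) \rtimes C
\]
is nilpotent. Granted this, the theorem follows at once: any non-trivial $g \in G$ either has $\pi(g) \neq 1$, in which case $g$ survives in the torsion-free abelian (hence nilpotent) quotient $C$; or else $g \in K \setminus \{1\}$, so $g \notin \gamma_n(K)$ for some $n$ and the image of $g$ in the nilpotent group $G_n$ is non-trivial. Equivalently, $G$ embeds into the pro-nilpotent completion $\varprojlim_n G_n$.

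The hard part is the nilpotence of $G_n$. The key observation is that, since $A$ is nilpotent of some class $c_A$, the iterated commutator $[c,[c,\ldots,[c,x]]]$ with $c_A$ copies of $c \in C$ and one $x \in N_A$ lies in $\gamma_{c_A+1}(A) = \{1\}$; reducing modulo $[N_A, N_A]$, this says that each $c \in C$ acts on $N_A^{ab}$ by an automorphism $\phi_c$ satisfying $(\phi_c - \mathrm{id})^{c_A} = 0$. The analogous statement holds for $B$, so $C$ acts unipotently on $K^{ab} = N_A^{ab} \oplus N_B^{ab}$. A standard Leibniz-rule argument then propagates unipotence to each graded piece $\gamma_i(K)/\gamma_{i+1}(K)$, viewed as a subquotient of $(K^{ab})^{\otimes i}$. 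Since $C$ is abelian, a Kolchin-type argument produces a $C$-invariant refinement of the lower central filtration of $K/\gamma_n(K)$ along which $C$ acts trivially on successive quotients; interleaving these flags assembles into a finite central series for $G_n$, whence nilpotence. The main subtlety to handle is that the theorem does not assume $N_A$ or $N_B$ to be finitely generated; this can be addressed either by restricting to finitely generated $\ZZ[C]$-submodules of each graded piece, or by passing to the Mal'cev completion of $K$ over $\QQ$, where the unipotent actions become pro-unipotent and the linear-algebraic arguments apply in their cleanest form.
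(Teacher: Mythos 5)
Your decomposition $G \cong (N_A * N_B) \rtimes C$, the appeal to Mal'cev, and the strategy of showing each quotient $G/\gamma_n(K)$ nilpotent all match the paper's argument (Proposition~3.3, built on Lemmas~3.1 and~3.2): the paper proves $\gamma_{r_n}(G)\subseteq\gamma_{n+1}(K)$ for an explicit $r_n$, which is exactly the statement that $G/\gamma_{n+1}(K)$ is nilpotent of bounded class. So the scaffolding of your proposal is the same as the paper's.

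Two comments on the details you leave open. First, the Kolchin detour is both risky and unnecessary. You want to pass from ``each $\phi_c$ individually unipotent'' on $K^{\mathrm{ab}}$ to joint unipotence of the whole family $\{\phi_c\}_{c\in C}$; invoking Kolchin requires a field and finite dimension, which is precisely the obstruction you flag at the end. But joint unipotence is free: since $c_1,\dots,c_{c_A}\in C\subseteq A$ and $x\in N_A\subseteq A$, the iterated commutator $[c_1,[c_2,\dots,[c_{c_A},x]]]$ already lies in $\gamma_{c_A+1}(A)=\{1\}$, so $C^{(c_A)}(N_A)=\{1\}$ (and similarly for $B$) with distinct $c_i$ allowed from the start. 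Nilpotency of $A$ and $B$ gives you the joint statement directly, with no linear algebra and no finite-generation hypothesis. Second, the ``standard Leibniz-rule argument'' propagating unipotence to $\gamma_i(K)/\gamma_{i+1}(K)$ and the subsequent assembly into a central series for $G_n$ is the technical heart of the theorem, not a routine step. The paper does this via a Hall--Witt-based induction (Lemma~3.1: $C^{(Nn-n+1)}(\gamma_n(H))\subseteq\gamma_{n+1}(H)$) combined with the Guaschi--Pereiro description of $\gamma_n(H\rtimes C)$ (Lemma~3.2), and this is exactly where the hypothesis that $C$ is abelian is actually used ($\gamma_2(C)=\{1\}$ collapses the recursion for $L_n$). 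The twisted Leibniz rule on the associated graded Lie ring does give the right heuristic, but converting it into a genuine group-theoretic bound modulo the correct term of the lower central series requires the commutator bookkeeping that the paper spells out; as written, your proposal identifies the right target but defers the load-bearing computation.
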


The free product   $C_p * C_q$ mentioned above shows that  the torsionfreeness condition in (1) cannot be discarded. Similarly, Baumslag's example from \cite{Baumslag62} shows that the condition (2) cannot be removed either.   It is  likely that the hypothesis (3) is redundant: indeed, this could follow by sharpening the commutator calculus involved in the proof of Theorem~\ref{thmintro:free-amalgamated-prod}. We shall not pursue that goal here, as our main purpose is to highlight the intriguing phenomena that one observes when passing from edges of groups to higher dimensional amalgams.

Before doing so, let us   mention that we will also provide sufficient conditions for $A*_C B$ to be residually torsionfree nilpotent, see  Remark~\ref{rmk:res-nilp-IFF-tf-res-nilp} below. As a consequence, we will see that if $U_1 \cong U_2 \cong U_3$ are three copies of the additive group of $\QQ$, then the group
$$U_1 * U_2 * U_3/ \llangle [U_1, [U_1, U_2]], [U_2, [U_2, U_1]], [U_2, [U_2, U_3]],  [U_3, [U_3, U_2]]\rrangle$$ is residually torsionfree nilpotent. Notice that the latter group is a free amalgamated product of two copies of the Heisenberg group over $\QQ$, since the group
$U_1 * U_2 / \llangle [U_1, [U_1, U_2]], [U_2, [U_2, U_1]]\rrangle$ is isomorphic to the Heisenberg group over $\QQ$ (see Example~\ref{example:simplerpresentationrank2}).

\subsection{A triangle of groups with rational Heisenberg vertex groups}

Let us now consider the group 
$$\mathcal U_{\widetilde A_2}(\QQ) := U_1 * U_2 * U_3/ \llangle [U_i, [U_i,  U_j]], \ i \neq j \in \{1, 2, 3\}\rrangle$$  
where, as before, the groups $U_1, U_2, U_3$ are copies of $\QQ$. 
The group  $\mathcal U_{\widetilde A_2}(\QQ)$  may be viewed as the fundamental group of a triangle of groups whose edge groups are $U_1$, $U_2$, $U_3$, and whose vertex groups are three copies of the Heisenberg group over $\QQ$ (see Example~\ref{example:simplerpresentationrank2}).

For each $i \in \{1, 2, 3\}$, let us fix an isomorphism $x_i \colon \QQ \to U_i$. Let $\QQ[t]$ be the polynomial ring in the indeterminate $t$. One observes easily that the assignments 
$$
x_1(a)\mapsto \begin{psmallmatrix}1&a&0\\ 0&1&0\\ 0&0&1\end{psmallmatrix}, \quad x_2(a)\mapsto \begin{psmallmatrix}1&0&0\\ 0&1&a\\ 0&0&1\end{psmallmatrix}\quad\textrm{and}\quad x_3(a)\mapsto \begin{psmallmatrix}1&0&0\\ 0&1&0\\ at&0&1\end{psmallmatrix}\quad\textrm{for all $a\in\QQ$},
$$
extend to  a group homomorphism $\varphi \colon \mathcal U_{\widetilde A_2}(\QQ) \to \SL_3(\QQ[t])$. The image of $\varphi$ is the subgroup of $\SL_3(\QQ[t])$ consisting of those matrices that are upper triangular modulo~$t$. The following result characterises the kernel of $\varphi$ as the \textbf{nilpotent residual} of $\mathcal U_{\widetilde A_2}(\QQ)$, i.e. the intersection of all terms of its lower central series. 

\begin{thmintro}\label{thmintro:A-2-tilde}
The kernel of $\varphi \colon \mathcal U_{\widetilde A_2}(\QQ) \to \SL_3(\QQ[t])$ coincides with $\gamma_\infty(\mathcal U_{\widetilde A_2}(\QQ))$. 

Moreover,  the subgroup of $ \mathcal U_{\widetilde A_2}(\QQ) $ generated by $[U_1, U_2] \cong \QQ$ and $[U_2, U_3] \cong \QQ$ is isomorphic to the free product $\QQ *  \QQ$, whereas its image under $\varphi$ is isomorphic to the Heisenberg group over $\QQ$. 

In particular, $\gamma_\infty(\mathcal U_{\widetilde A_2}(\QQ))$ contains $[[U_1, U_2], [[U_1, U_2], [U_2, U_3]]] \neq \{e\}$. Hence the group $ \mathcal U_{\widetilde A_2}(\QQ)$ is not residually nilpotent. 
\end{thmintro}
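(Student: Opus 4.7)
The plan is to establish three components from which all assertions follow: (a) the image $\varphi(\langle[U_1,U_2],[U_2,U_3]\rangle)$ in $\SL_3(\QQ[t])$ is isomorphic to the Heisenberg group over $\QQ$; (b) the abstract subgroup $\langle[U_1,U_2],[U_2,U_3]\rangle\leq \mathcal U_{\widetilde A_2}(\QQ)$ is already the free product $\QQ*\QQ$; (c) $\ker(\varphi)=\gamma_\infty(\mathcal U_{\widetilde A_2}(\QQ))$. The theorem then follows because the iterated commutator $[[U_1,U_2],[[U_1,U_2],[U_2,U_3]]]$ is killed in the two-step nilpotent image by (a) but survives nontrivially in the free product by (b), hence lies in $\gamma_\infty\setminus\{e\}$ by (c).

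For (a), I would compute directly that $\varphi([x_1(a),x_2(b)])=I+ab\,e_{13}$ and $\varphi([x_2(a),x_3(b)])=I+abt\,e_{21}$, where $e_{ij}$ denotes the standard matrix unit. The bracket identity $[I+r\,e_{13},I+s\,e_{21}]=I-rs\,e_{23}$ combined with the vanishings $e_{13}e_{23}=e_{21}e_{23}=e_{23}e_{13}=e_{23}e_{21}=0$ shows that $I-rs\,e_{23}$ is central in the subgroup generated by $\varphi([U_1,U_2])$ and $\varphi([U_2,U_3])$. Hence this subgroup is two-step nilpotent with centre $\cong\QQ$ and abelianisation $\cong\QQ\oplus\QQ$, the commutator form being the non-degenerate product $(r,s)\mapsto rs$; this identifies it with the Heisenberg group over $\QQ$.

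For (b), the idea is to lift to the intermediate group
\[
\Gamma:=U_1*U_2*U_3\big/\llangle[U_1,[U_1,U_2]],[U_2,[U_2,U_1]],[U_2,[U_2,U_3]],[U_3,[U_3,U_2]]\rrangle,
\]
which by Example~\ref{example:simplerpresentationrank2} decomposes as the amalgamated free product $H_{12}*_{U_2}H_{23}$ of two copies of the Heisenberg group over $\QQ$ along $U_2$. Since $[U_1,U_2]$ is the centre of $H_{12}$ and hence meets the edge group $U_2$ trivially (and similarly for $[U_2,U_3]\subset H_{23}$), the normal-form theorem for amalgamated free products yields $\langle[U_1,U_2],[U_2,U_3]\rangle_\Gamma\cong\QQ*\QQ$. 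Now $\mathcal U_{\widetilde A_2}(\QQ)$ is the further quotient $\Gamma/N$ where $N$ is the normal closure of the two $(1,3)$-relators $[U_1,[U_1,U_3]]$ and $[U_3,[U_3,U_1]]$. The main obstacle is to verify that $N\cap\langle[U_1,U_2],[U_2,U_3]\rangle_\Gamma=\{1\}$; my plan is to exploit the Bass--Serre tree of $\Gamma$ (with vertex stabilisers $H_{12},H_{23}$ and edge stabiliser $U_2$) together with the fact that $N$ is the normal closure of elements lying in $\langle U_1,U_3\rangle_\Gamma\cong U_1*U_3$, to rule out that any reduced Bass--Serre word in $[U_1,U_2]$ and $[U_2,U_3]$ can coincide with a product of $\Gamma$-conjugates of the $(1,3)$-relators.

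For (c), the inclusion $\gamma_\infty\subseteq\ker(\varphi)$ follows because $\im(\varphi)$ embeds into the pro-unipotent radical of the Iwahori subgroup of $\SL_3(\QQ[[t]])$, which is pro-nilpotent and hence residually nilpotent. For the reverse inclusion $\ker(\varphi)\subseteq\gamma_\infty$, I would invoke the general residual-nilpotence characterisation of KMS groups established earlier in the paper, specialised to type $\widetilde A_2$. Combining (a)--(c), the element $w:=\bigl[[x_1(1),x_2(1)],\bigl[[x_1(1),x_2(1)],[x_2(1),x_3(1)]\bigr]\bigr]$ is a nontrivial iterated commutator in $\QQ*\QQ$ by (b), satisfies $\varphi(w)=1$ by two-step nilpotence of the Heisenberg image in (a), and therefore produces a nontrivial element of $\ker(\varphi)=\gamma_\infty$, establishing the final non-residual-nilpotence assertion.
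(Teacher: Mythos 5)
Parts (a) and (c) of your proposal are sound and align with what the paper does: the matrix computation identifying the Heisenberg image is straightforward, and the identity $\ker\varphi=\gamma_\infty$ is indeed obtained by specialising the general Lie-algebra machinery (the isomorphisms $\Lie(\UU_A(\QQ))\cong\n^+_{A,\QQ}\cong\Lie(\U^+_A(\QQ))$, yielding Corollary~\ref{corintro:residnilp}) to type $\widetilde A_2$, together with the residual nilpotence of $\U^+_A(\QQ)$ which follows from its embedding in $\U^{ma+}_A(\QQ)$. Your deduction of the final non-residual-nilpotence assertion from (a), (b), (c) is also correct: $[u,[u,v]]\neq 1$ in $\QQ*\QQ$, but it dies in the two-step nilpotent Heisenberg image.

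However, part (b) is the heart of the theorem and your argument there has a genuine gap. You correctly reduce to showing $N\cap\langle[U_1,U_2],[U_2,U_3]\rangle_\Gamma=\{1\}$, where $N$ is the normal closure in $\Gamma=H_{12}*_{U_2}H_{23}$ of the two $(1,3)$-Serre relators, and you call this ``the main obstacle'' --- but then you merely restate the difficulty rather than resolving it. The Bass--Serre tree of $\Gamma$ controls subgroups that act nicely on the tree (elliptic subgroups, free factors, etc.), but $N$ is a \emph{normal} subgroup generated by conjugates of elements of $\langle U_1,U_3\rangle$; normal subgroups of amalgams are typically huge and have no useful normal form, so the Bass--Serre structure of $\Gamma$ gives essentially no traction on membership in $N$. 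There is no obvious way to ``rule out that a reduced word in $[U_1,U_2]$ and $[U_2,U_3]$ equals a product of conjugates of the $(1,3)$-relators'' by inspecting reduced forms in $\Gamma$. The paper instead works directly with the $2$-dimensional triangle complex $Y$ on which $\mathcal U_{\widetilde A_2}(\QQ)$ acts (not with an intermediate one-dimensional Bass--Serre tree): it proves girth and edge-distance estimates for the coset graphs $\Gamma(U_{ij};U_i,U_j)$ (Propositions~\ref{prop:distance-in-link} and~\ref{prop:girth}) to equip $Y$ with a $\CAT$ metric, and then constructs a $\varphi$-equivariant local isometry from the Bass--Serre tree of $\QQ*\QQ$ into $Y$; the $\CAT$ local-to-global principle upgrades this to a global isometric embedding, which forces injectivity of $\QQ*\QQ\to\mathcal U_{\widetilde A_2}(\QQ)$. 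Your approach would need an analogous quantitative geometric input to close the gap, and it is not clear the one-dimensional picture can supply it.
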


The proof has two parts. The first part, namely the characterisation of $\Ker(\varphi)$ as the nilpotent residual, relies on Lie ring techniques. In order to be more precise, let us fix some notation. Given a group $H$, we write $\gamma_1(H):=H$ and $\gamma_{n+1}(H):=[H,\gamma_n(H)]$ ($n\geq 1$) for its lower central series. The space $\Lie(H):=\bigoplus_{n\geq 1}\gamma_{n}(H)/\gamma_{n+1}(H)$ is then naturally an $\NN$-graded \textbf{Lie ring}, with Lie bracket defined by
$$[g\gamma_{m+1}(H),h\gamma_{n+1}(H)]:=[g,h]\gamma_{m+n+1}(H)\quad\textrm{for all $g\in\gamma_m(H)$ and $h\in\gamma_n(H)$,}$$ 
where $[g,h]:=g\inv h\inv gh$ is the group commutator bracket.
We shall show that the Lie ring  $\Lie(\mathcal U_{\widetilde A_2}(\QQ))$ is a Lie algebra over $\QQ$, which happens to be  isomorphic to the positive part of the affine Kac--Moody algebra of type $\widetilde A_2$ over $\QQ$. 

For the second part, namely the description of the subgroup generated by $[U_1, U_2] \cong \QQ$ and $[U_2, U_3] \cong \QQ$, we use  basic tools from CAT(0) geometry, that are available in this context through the natural action of  $\mathcal U_{\widetilde A_2}(\QQ)$ on a CAT(0) triangle complex (see the proof of Theorem~\ref{thm:freeproductKMS}).

Both parts can actually be developed in a more  general context that we now proceed to describe. 

\subsection{Amalgams of rational unipotent groups}

The group $\mathcal U_{\widetilde A_2}(\QQ)$ is a special instance belonging to a family of  amalgams of maximal unipotent subgroups of rank $2$ Chevalley groups over an arbitrary field $\KK$, known as \emph{Kac--Moody--Steinberg} (KMS) \emph{groups}.

A KMS group $\UU_A(\KK)$ over a field $\KK$ is associated with a \emph{generalised Cartan matrix} (GCM) $A=(a_{ij})_{i,j\in I}$ (see \cite[Chapter~1]{Kac}) that is \emph{$2$-spherical}, that is, such that each submatrix $A_{ij}=\begin{psmallmatrix}2&a_{ij}\\ a_{ij}&2\end{psmallmatrix}$ with $i\neq j$ is a Cartan matrix (hence of one of the types $A_1\times A_1$, $A_2$, $B_2$ or $G_2$). It is an amalgamated product over all pairs $\{i,j\}$ of the unipotent radical $U_{ij}$ of the standard Borel subgroup of the Chevalley group of type $A_{ij}$ over $\KK$ (see \S\ref{section:prelim} for a precise definition). A specific KMS group over $\KK=\FF_p$ appeared in \cite[Prop.~7.4]{EJ10} and \cite[Thm.~12.1]{Ershov_GS} as an example of a Golod--Shafarevich group with property (T). More recently, KMS groups over finite fields were also shown to be promising sources of high-dimensional expanders (see \cite{KMSHDX}).

Let $\g_{A,\QQ}$ denote the (derived) \emph{Kac--Moody algebra} of type $A$ over $\QQ$ (defined by the same Serre presentation over $\QQ$ as semisimple Lie algebras), with triangular decomposition $\g_{A,\QQ}=\n^-_{A,\QQ}\oplus\h_{\QQ}\oplus\n^+_{A,\QQ}$ and corresponding set of roots $\Delta$, as in \cite[Chapter~1]{Kac}. Let also $\G_A(\QQ)$ be the \emph{minimal Kac--Moody group} of type $A$ over $\QQ$, and let $\U^+_A(\QQ)$ be the subgroup of $\G_A(\QQ)$ generated by all \emph{root groups} $\U_{\alpha}(\QQ)$ associated to positive real roots $\alpha\in\Delta$, as in \cite{Tits87} (see also \cite[Definition~7.47]{KMGbook}). Finally, let $\U^{ma+}_A(\QQ)$ denote the completion of $\U^+_A(\QQ)$ in the \emph{maximal Kac--Moody group} $\G^{pma}_A(\QQ)$ (see \cite[\S8.5.1]{KMGbook}). Then there are canonical group morphisms (see \S\ref{section:prelim})
$$\UU_A(\QQ)\to \U^+_A(\QQ)\to\U^{ma+}_A(\QQ).$$
For instance, in the special case where the matrix $A=\begin{psmallmatrix}2&-1&-1\\ -1&2&-1\\-1&-1&2\end{psmallmatrix}$ is of affine type $\widetilde{A}_2$, the map 
$\UU_A(\QQ)\to \U^+_A(\QQ)$ is the homomorphism $\varphi$ appearing above,  and the groups $\U^+_A(\QQ)$ and $\U^{ma+}_A(\QQ)$ respectively coincide with the subgroup of matrices in $\SL_3(\QQ[t])$ and $\SL_3(\QQ[\![t]\!])$ that are upper triangular modulo $t$.

Our main theorem asserts that the Lie rings associated to $\UU_A(\QQ)$ and $\U^+_A(\QQ)$ admit natural $\QQ$-Lie algebra structures, and are canonically isomorphic to $\n^+_{A,\QQ}$ as $\QQ$-Lie algebras.

\begin{thmintro}\label{thmintro:main}
Let $A$ be a $2$-spherical GCM. Then the canonical group morphisms $\UU_A(\QQ)\to \U^+_A(\QQ)\to\U^{ma+}_A(\QQ)$ induce isomorphisms of $\QQ$-Lie algebras
$$\Lie(\UU_A(\QQ))\stackrel{\cong}{\to}\Lie(\U^+_A(\QQ))\stackrel{\cong}{\to} \n^+_{A,\QQ}.$$
\end{thmintro}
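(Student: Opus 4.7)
The plan is to construct graded $\QQ$-Lie algebra morphisms $\alpha\colon \n^+_{A,\QQ} \to \Lie(\UU_A(\QQ))$ and $\pi\colon \Lie(\U^+_A(\QQ)) \to \n^+_{A,\QQ}$ such that, with $\beta\colon \Lie(\UU_A(\QQ)) \to \Lie(\U^+_A(\QQ))$ induced by the group morphism, the composition $\pi \circ \beta \circ \alpha$ is the identity. Combined with the surjectivity of both $\beta$ and $\alpha$, this will force all three maps to be isomorphisms. The first preparatory task is to identify $\Lie(\U^{ma+}_A(\QQ))$ with $\n^+_{A,\QQ}$ (from which $\pi$ will arise by restriction to $\U^+_A(\QQ)$): the group $\U^{ma+}_A(\QQ)$ is pro-unipotent, and for each $n \geq 1$, the quotient $\U^{ma+}_A(\QQ)/\U^{ma+}_{[n]}$ by the normal subgroup generated by root groups $\U_\alpha(\QQ)$ with $\height(\alpha) \geq n$ is a finite-dimensional unipotent algebraic $\QQ$-group, whose $\QQ$-Lie algebra (via the Malcev correspondence) is $\n^+_{A,\QQ}/\n^+_{A,\QQ,\geq n}$. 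The crucial point is that the height filtration coincides with the lower central series: the inclusion $\gamma_n \subseteq \U^{ma+}_{[n]}$ follows from the Chevalley commutator relations, while the reverse inclusion uses that $\n^+_{A,\QQ}$ is generated in degree $1$, so that iterated brackets of degree-$1$ elements span $\n^+_{A,\QQ,n}$---which via Malcev translates into the statement that iterated commutators of simple root group elements reach every root space at the given height. The analogous identification $\Lie(\U^+_A(\QQ)) \cong \n^+_{A,\QQ}$ then follows, using (from Tits' defining relations) that real root group commutators in $\U^+_A(\QQ)$ also span the imaginary root spaces at each height.

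The map $\alpha$ is then defined by $e_i \mapsto \tilde e_i := x_i(1)\gamma_2(\UU_A(\QQ))$ and invokes the Serre presentation of $\n^+_{A,\QQ}$: it suffices to verify the rank-$2$ Serre relations $(\ad \tilde e_i)^{1-a_{ij}}(\tilde e_j) = 0$ in $\Lie(\UU_A(\QQ))$ for each pair $i \neq j$. Since each such relation involves only the indices $i$ and $j$, the entire calculation takes place inside the subgroup $U_{ij} \cong \U^+_{A_{ij}}(\QQ)$ of $\UU_A(\QQ)$. Applying the preceding identification to the rank-$2$ spherical matrix $A_{ij}$, where $U_{ij}$ is a finite-dimensional nilpotent unipotent $\QQ$-group, yields $\Lie(U_{ij}) \cong \n^+_{A_{ij},\QQ}$, a Lie algebra in which the Serre relation manifestly holds. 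Hence the $(1-a_{ij})$-fold commutator of $x_i(1)$ and $x_j(1)$ lies in $\gamma_{3-a_{ij}}(U_{ij}) \subseteq \gamma_{3-a_{ij}}(\UU_A(\QQ))$ and thus vanishes in the graded piece of $\Lie(\UU_A(\QQ))$. The composition $\pi \circ \beta \circ \alpha$ visibly sends each $e_i$ to itself, so by the universal property of the Serre presentation it is the identity of $\n^+_{A,\QQ}$; in particular $\alpha$ is injective and $\pi \circ \beta$ is surjective, while $\beta$ is surjective because the group morphism $\UU_A(\QQ) \to \U^+_A(\QQ)$ is onto.

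I expect the main obstacle to lie in establishing the surjectivity of $\alpha$ (equivalently, the injectivity of $\beta$). The image of $\alpha$ is the $\QQ$-Lie subalgebra of $\Lie(\UU_A(\QQ))$ generated by the $\tilde e_i$'s, so the question reduces to showing that $\Lie(\UU_A(\QQ))$ carries a $\QQ$-Lie algebra structure generated by those elements. In grade $1$ this is immediate from $\UU_A(\QQ)^{\mathrm{ab}} \cong \bigoplus_i U_i$. For higher grades one would proceed inductively: $\QQ$-divisibility of each $\gamma_n/\gamma_{n+1}$ propagates from that of $U_i \cong \QQ$ via iterated commutator identities expressing $[x_i(a), g]$ in terms of $[x_i(1), g]$ and higher corrections, while torsion-freeness is the subtler ingredient and calls for a rank-$2$ analysis: any putative torsion class in $\gamma_n(\UU_A(\QQ))/\gamma_{n+1}(\UU_A(\QQ))$ should be traced back through the rank-$2$ subgroups $U_{ij}$---which, being genuine nilpotent $\QQ$-groups, contribute torsion-free graded pieces---and detected by the injection $\beta \circ \alpha \colon \n^+_{A,\QQ} \hookrightarrow \Lie(\U^+_A(\QQ))$ into a torsion-free target.
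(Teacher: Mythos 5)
Your overall skeleton matches the paper's: both proofs build a map $\alpha\colon\n^+_{A,\QQ}\to\Lie(\UU_A(\QQ))$ from the Serre presentation (checking the Serre relations inside the rank-$2$ subgroups $U_{ij}$, which is exactly the paper's use of relation~(2.10) and Proposition~5.4), build a Lie ring map $\pi$ from $\Lie(\U^+_A(\QQ))$ to $\n^+_{A,\QQ}$ out of the embedding into $\U^{ma+}_A(\QQ)$, and conclude by observing that $\pi\circ\beta\circ\alpha=\mathrm{id}$ forces all three maps to be bijective once surjectivity of $\alpha$ and $\beta$ is in hand. (One remark on your construction of $\pi$: the paper does not need, and does not claim, that the lower central series of $\U^{ma+}_A(\QQ)$ coincides with the height filtration $(\U^{ma}_n(\QQ))_n$; it works directly with the Lie ring $\Lie(\U^{ma+}_A(\QQ);(\U^{ma}_n)_n)$ built from the height filtration, which only requires $\gamma_n(\U^+_A(\QQ))\subseteq\U^{ma}_n(\QQ)$ in one direction. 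Your stronger claim about the two filtrations coinciding is neither needed nor obviously true at this stage.)

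However, there is a genuine gap at the step you yourself flag as ``the main obstacle'': establishing the $\QQ$-Lie algebra structure on $\Lie(\UU_A(\QQ))$. You correctly split this into $\QQ$-divisibility and well-definedness/torsion-freeness of the scalar action, and the divisibility half of your sketch is fine. But your proposed mechanism for the second half---``detected by the injection $\beta\circ\alpha\colon\n^+_{A,\QQ}\hookrightarrow\Lie(\U^+_A(\QQ))$''---points in the wrong direction. To detect torsion in $\Lie(\UU_A(\QQ))$ you would need an injective Lie ring morphism \emph{out of} $\Lie(\UU_A(\QQ))$ into a torsion-free target; $\beta\circ\alpha$ maps \emph{into} $\Lie(\U^+_A(\QQ))$ and says nothing about elements of $\Lie(\UU_A(\QQ))$ not visibly in its image. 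Worse, to even define $\alpha$ on all of $\n^+_{A,\QQ}$ (rather than merely on the $\ZZ$-Lie subring generated by the $e_i$) you already need the target to be a $\QQ$-Lie algebra, so this route is circular. Likewise, injectivity of the Lie ring morphism $\Lie(\UU_A(\QQ))\to\n^+_{A,\QQ}$ (which would give torsion-freeness) is essentially the theorem itself.

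What the paper does, and what your proposal is missing, is the existence of a \emph{graded scaling group endomorphism} $\pi_\lambda\colon\UU_A(\QQ)\to\UU_A(\QQ)$, $x_\gamma(a)\mapsto x_\gamma(\lambda^{\height(\gamma)}a)$ (Lemma~5.1), which exists because the Chevalley commutator relations~(2.8) defining the $U_{ij}$ are homogeneous with respect to the height grading. Combining this with the elementary commutator identity of Lemma~4.1, Proposition~4.2 shows that the formula $\lambda\cdot[x_{i_1}(a_1),\dots,x_{i_n}(a_n)]\!]\gamma_{n+1}=[x_{i_1}(\lambda a_1),\dots,x_{i_n}(a_n)]\!]\gamma_{n+1}$ is \emph{well defined}: if a $\ZZ$-combination of iterated commutators lies in $\gamma_{n+1}$, then so does the $\lambda$-scaled combination, because one can apply the group endomorphism $\pi_s$ and rescale. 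This single device gives both divisibility and torsion-freeness at once and is the non-obvious ingredient. (For $\U^+_A(\QQ)$ the paper then does \emph{not} rerun the same argument directly but transports the $\QQ$-structure across the now-established isomorphism $\n^+_{A,\QQ}\cong\Lie(\UU_A(\QQ))\to\Lie(\U^+_A(\QQ))$, using injectivity of $\pi$; this is another small subtlety your inductive ``rank-$2$'' plan does not address.)
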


Assume for the rest of this introduction that $A$ is a $2$-spherical GCM.

As a first immediate consequence of Theorem~\ref{thmintro:main}, we can compute the pro-nilpotent completions of $\UU_A(\QQ)$ and $\U^+_A(\QQ)$. For a group $H$, denote by $\widehat{H}$ its \emph{pro-nilpotent completion}, that is, its Hausdorff completion with respect to the filtration $(\gamma_n(H))_{n\in\NN}$. 
Observe that the nilpotent residual $\gamma_{\infty}(H):=\bigcap_{n\in\NN}\gamma_n(H)$ is the kernel of the canonical map $H\to \widehat{H}$.

\begin{corintro}\label{corintro:isom_completions}
The canonical maps $\widehat{\UU}_A(\QQ)\to\widehat{\U}^+_A(\QQ)\to\U^{ma+}_A(\QQ)$ are isomorphisms of topological groups.
\end{corintro}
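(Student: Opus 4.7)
The plan is to derive both isomorphisms of Corollary~\ref{corintro:isom_completions} from Theorem~\ref{thmintro:main}, via the following general principle: if $\varphi : G \to H$ is a group homomorphism such that $\Lie(\varphi) : \Lie(G) \to \Lie(H)$ is an isomorphism of graded Lie rings, then $\varphi$ induces isomorphisms $G/\gamma_n(G) \xrightarrow{\cong} H/\gamma_n(H)$ for every $n\geq 1$, and hence a topological isomorphism $\widehat\varphi : \widehat G \xrightarrow{\cong} \widehat H$ of pro-nilpotent completions. I would prove this by induction on $n$, applying the five lemma (in its group-theoretic version) to the short exact sequences
$$1 \to \gamma_n(G)/\gamma_{n+1}(G) \to G/\gamma_{n+1}(G) \to G/\gamma_n(G) \to 1$$
and the analogue for $H$: the leftmost vertical map is the degree-$n$ component of $\Lie(\varphi)$ (an isomorphism by hypothesis) and the rightmost is an isomorphism by induction. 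A short commutator calculation, using that $\Lie(\varphi)$ is an isomorphism in degree~$1$, also shows inductively that $\varphi(G)\cdot \gamma_n(H) = H$ at every stage, so the diagram is well-posed.

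Applied to the canonical (surjective) map $\UU_A(\QQ) \to \U^+_A(\QQ)$ using Theorem~\ref{thmintro:main}, this principle immediately yields $\widehat{\UU}_A(\QQ) \xrightarrow{\cong} \widehat{\U}^+_A(\QQ)$. To complete the proof, I would identify $\widehat{\U}^+_A(\QQ)$ directly with $\U^{ma+}_A(\QQ)$ as topological groups. By Theorem~\ref{thmintro:main}, the grading on $\Lie(\U^+_A(\QQ))$ by lower central series corresponds, under the isomorphism with $\n^+_{A,\QQ}$, to the height grading (since $\n^+_{A,\QQ}$ is generated in height~$1$). Hence each quotient $\U^+_A(\QQ)/\gamma_n(\U^+_A(\QQ))$ is a torsionfree divisible nilpotent $\QQ$-group with Lie algebra $\bigoplus_{\height(\alpha)<n}\g_\alpha$; by Mal'cev's equivalence between finite-dimensional nilpotent $\QQ$-Lie algebras and $\QQ$-unipotent algebraic groups, it is canonically isomorphic to $\exp\bigl(\n^+_{A,\QQ}/\gamma_n(\n^+_{A,\QQ})\bigr)$. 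On the other hand, the structure of maximal Kac--Moody groups (see \cite[\S8.5]{KMGbook}) presents $\U^{ma+}_A(\QQ)$ as the inverse limit $\varprojlim_n \U^{ma+}_A(\QQ)/U_n$, where $U_n$ is the closed normal subgroup corresponding to root spaces of height at least~$n$, with each quotient canonically $\exp\bigl(\n^+_{A,\QQ}/\gamma_n(\n^+_{A,\QQ})\bigr)$. The canonical map $\U^+_A(\QQ) \to \U^{ma+}_A(\QQ)$ is compatible with these identifications, so taking inverse limits yields $\widehat{\U}^+_A(\QQ) \xrightarrow{\cong} \U^{ma+}_A(\QQ)$.

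The main obstacle is the identification of each natural quotient $\U^{ma+}_A(\QQ)/U_n$ with $\exp\bigl(\n^+_{A,\QQ}/\gamma_n(\n^+_{A,\QQ})\bigr)$. While this is intrinsic to the definition of $\U^{ma+}_A(\QQ)$ as a pro-$\QQ$-unipotent group, making it explicit requires carefully tracking the filtrations in \cite[\S8.5]{KMGbook}.
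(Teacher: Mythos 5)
Your first isomorphism $\widehat{\UU}_A(\QQ)\to\widehat{\U}^+_A(\QQ)$ is fine: the five-lemma induction on the tower $G/\gamma_n(G)$ is a valid (if slightly heavier) alternative to the paper's route, which instead observes in Corollary~\ref{corollary:proofCD} that the kernel of $\UU_A(\QQ)\twoheadrightarrow\U^+_A(\QQ)$ is exactly $\gamma_\infty(\UU_A(\QQ))$ and then passes directly to pro-nilpotent completions. Both arguments rest on the Lie-ring isomorphism of Theorem~\ref{thmintro:main} and are essentially equivalent in content.

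The second isomorphism is where there is a real gap. You identify the main obstacle as unwinding the structure of $\U^{ma+}_A(\QQ)/\U^{ma}_n(\QQ)$, but that side is essentially by construction; the genuinely problematic step is the claim that $\U^+_A(\QQ)/\gamma_n(\U^+_A(\QQ))$ is \emph{canonically} $\exp\bigl(\n^+_{A,\QQ}/\gamma_n(\n^+_{A,\QQ})\bigr)$. Mal'cev's correspondence attaches to the torsionfree divisible nilpotent group $\U^+_A(\QQ)/\gamma_n(\U^+_A(\QQ))$ a nilpotent $\QQ$-Lie algebra $\mf g_n$, but what Theorem~\ref{thmintro:main} controls is only the \emph{associated graded} Lie ring $\bigoplus_{m<n}\gamma_m/\gamma_{m+1}\cong\bigoplus_{\height(\alpha)<n}\g_\alpha$. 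A finite-dimensional nilpotent Lie algebra is in general not determined by its associated graded with respect to the lower central series, so the jump from the graded object to $\mf g_n\cong\n^+_{A,\QQ}/\gamma_n(\n^+_{A,\QQ})$ does not follow from what you have stated. (One could try to repair this by invoking the scaling automorphisms $\pi_\lambda$ to split the filtration on $\mf g_n$, but you would then need to verify compatibility with the map to $\U^{ma+}_A(\QQ)/\U^{ma}_n(\QQ)$, which amounts to re-deriving the ingredient you are missing.)

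The missing ingredient is precisely Corollary~\ref{cor:Gamma_nUma_n}: $\gamma_n(\U^+_A(\QQ))=\U^+_A(\QQ)\cap\U^{ma}_n(\QQ)$ for all $n$, which itself follows quickly from Proposition~\ref{prop:U1scalarmult} (the injectivity of $\Lie(\U^+_A(\QQ))\to\n^+_{A,\QQ}$). Once you have this, the paper's argument is immediate and avoids Mal'cev theory entirely: $\U^+_A(\QQ)$ is dense in the complete Hausdorff group $\U^{ma+}_A(\QQ)$, so $\U^{ma+}_A(\QQ)$ is the Hausdorff completion of $\U^+_A(\QQ)$ with respect to the filtration $\bigl(\U^+_A(\QQ)\cap\U^{ma}_n(\QQ)\bigr)_n$, while $\widehat{\U}^+_A(\QQ)$ is by definition the Hausdorff completion with respect to $\bigl(\gamma_n(\U^+_A(\QQ))\bigr)_n$; since the two filtrations agree, the two completions are canonically isomorphic.
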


By \cite{AM97}, the map $\UU_A(\QQ)\to \U^+_A(\QQ)$ is always surjective. 
The main result of \cite{DM07} implies that $\UU_A(\QQ)\to \U^+_A(\QQ)$ is also injective   provided $A$ is \emph{$3$-spherical}, that is, such that each submatrix $A_J=(a_{ij})_{i,j\in J}$ with $J\subseteq I$ of order $3$ is a Cartan matrix. The following theorem shows that the converse statement also holds:

\begin{thmintro}\label{thmintro:injiff3sph}
Let $\KK$ be an integral domain. If the map $\UU_A(\KK)\to \U^+_A(\KK)$ is injective, then $A$ is $3$-spherical.
\end{thmintro}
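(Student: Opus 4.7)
The plan is to prove the contrapositive: if $A$ fails to be $3$-spherical, I exhibit a non-trivial element of the kernel of $\varphi\colon\UU_A(\KK) \to \U^+_A(\KK)$. Pick $J \subseteq I$ with $|J| = 3$ such that the submatrix $A_J$ is not Cartan ($A_J$ is automatically $2$-spherical since $A$ is). The inclusion $A_J \hookrightarrow A$ yields a commutative square
\[
\begin{CD}
\UU_{A_J}(\KK) @>{\varphi_J}>> \U^+_{A_J}(\KK) \\
@V{\iota}VV @VVV \\
\UU_A(\KK) @>{\varphi}>> \U^+_A(\KK).
\end{CD}
\]
The canonical rank-$2$ retractions $U_{ij} \to U_i$ (which kill $U_j$ together with every root subgroup supported on a root involving $\alpha_j$), taken for each pair $\{i,j\}$ with $j \notin J$, assemble into a retraction $\UU_A(\KK) \to \UU_{A_J}(\KK)$ compatible with the amalgam relations. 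Hence $\iota$ is injective; if $\varphi$ were injective, so would be $\varphi\circ\iota$, forcing $\varphi_J$ to be injective (the latter factoring the former). This reduces the theorem to the case $|I|=3$.

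Now I follow the blueprint of Theorem~\ref{thmintro:A-2-tilde}. Assuming $|I|=3$, index $I=\{1,2,3\}$ so that both pairs $\{1,2\}$ and $\{2,3\}$ are non-orthogonal---possible, else $A$ decomposes as a direct sum of Cartan blocks. Set $H_{12} := [U_1, U_2]$ and $H_{23} := [U_2, U_3]$ inside $\UU_A(\KK)$, and let $\alpha,\beta$ denote respectively the highest positive real roots of the $\{\alpha_1,\alpha_2\}$- and $\{\alpha_2,\alpha_3\}$-subsystems. Then $\varphi(H_{12})$ lies in the subgroup of $\U^+_A(\KK)$ generated by the root groups $U_\gamma(\KK)$ with $\gamma \in (\NN\alpha_1 + \NN\alpha_2) \setminus \{\alpha_1,\alpha_2\}$, and symmetrically for $\varphi(H_{23})$. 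A case-by-case computation of $\la\alpha,\beta^\vee\ra$ and $\la\beta,\alpha^\vee\ra$ from the entries of $A_J$ shows that $\alpha,\beta$ span a \emph{finite} rank-$2$ sub-root-system of $\Delta^+$; hence $\la\varphi(H_{12}),\varphi(H_{23})\ra$ is contained in a nilpotent subgroup of $\U^+_A(\KK)$ of class at most some constant $c = c(A) \leq 5$ (independent of $\KK$).

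By contrast, and in the spirit of Theorem~\ref{thm:freeproductKMS} used in the proof of Theorem~\ref{thmintro:A-2-tilde}, I would establish that $\la H_{12}, H_{23}\ra \leq \UU_A(\KK)$ is isomorphic to the free product $\KK * \KK$, by a ping-pong argument for the action of $\UU_A(\KK)$ on the CAT(0) triangle complex naturally associated with its amalgam structure. Since $\KK * \KK$ is not nilpotent, it contains non-trivial iterated commutators of every length; any such commutator of length $c+1$ lies in $\Ker(\varphi_J)$ by the preceding paragraph, yielding the desired non-trivial kernel element and closing the contrapositive.

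The main obstacle is the uniform extension of the free-product identification $\la H_{12}, H_{23}\ra \cong \KK * \KK$ across all $2$-spherical non-Cartan rank-$3$ GCMs. While the triangle complex of $\UU_A(\KK)$ is CAT(0) in every such case (the Weyl group being a rank-$3$ Coxeter group acting on its Davis complex), the translation-length and disjoint-axis estimates required to place $H_{12}$ and $H_{23}$ in ping-pong position may demand a case-by-case verification across the affine types $\widetilde A_2$, $\widetilde C_2$, $\widetilde G_2$ and the various hyperbolic rank-$3$ Coxeter groups. The paradigmatic $\widetilde A_2$ case is geometrically transparent (Euclidean tiling by equilateral triangles), but a uniform treatment of the remaining types---perhaps via combinatorial retractions onto apartments---will concentrate most of the technical work.
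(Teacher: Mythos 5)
Your overall strategy—reduce to a rank-$3$ subcase by retraction, then exhibit two subgroups of $\UU_A(\KK)$ generating a free product whose image in $\U^+_A(\KK)$ is nilpotent—is exactly the paper's. The reduction step is Lemma~\ref{lemma:injectivityJinI} and is fine. But your choice of free-product generators diverges from the paper's, and that divergence introduces gaps that the paper's choice is specifically designed to avoid.

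The paper chooses \emph{single root groups}: after reindexing so that $a_{31},a_{32}\neq 0$, it considers $\U_{s_3\alpha_1}(\KK)$ and $\U_{s_3\alpha_2}(\KK)$ (Theorem~\ref{thm:freeproductKMS}). This choice buys two things at once. First, in $\U^+_A(\KK)$ these two root groups are conjugate, by the single Weyl element $\widetilde s_3$ and via (\ref{eqn:siactiononrootgroups}), to $\U_{\alpha_1}(\KK)$ and $\U_{\alpha_2}(\KK)$; hence their image is $\widetilde s_3\,\langle\U_{\alpha_1}(\KK),\U_{\alpha_2}(\KK)\rangle\,\widetilde s_3^{-1}\cong\U^+_{A_{12}}(\KK)$, which is nilpotent for a trivial reason. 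Second, each is a single root group inside a rank-$2$ vertex group $U_{i3}$, which is precisely the setting of the distance-in-link estimate (Proposition~\ref{prop:distance-in-link}, proved via the explicit computations in Lemmas~\ref{lemma:A2}--\ref{lemma:G2}); this makes the CAT(0) ping-pong on the triangle complex run uniformly across the $A_2$, $B_2$, $G_2$ subtypes with essentially no case analysis beyond those three lemmas.

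Your choice $H_{12}=[U_1,U_2]$ and $H_{23}=[U_2,U_3]$ replicates the $\widetilde A_2$ example from Theorem~\ref{thmintro:A-2-tilde}, where it works because $[U_1,U_2]$ and $[U_2,U_3]$ happen to be single root groups. But in general $[U_i,U_j]$ is a product of several root groups (e.g.\ two in type $B_2$, four in type $G_2$), so ``$\langle H_{12},H_{23}\rangle\cong\KK*\KK$'' is already the wrong assertion, and the nilpotence claim for the image becomes genuinely delicate: it is not enough that the two highest roots $\alpha,\beta$ span a spherical rank-$2$ sub-root-system, because $\varphi(H_{12})$ and $\varphi(H_{23})$ together involve all the non-simple roots supported on $\{\alpha_1,\alpha_2\}$ and $\{\alpha_2,\alpha_3\}$ respectively, not just $\alpha$ and $\beta$; you would have to show that all those roots lie in a set closed under addition within $\Delta^{re+}$ that generates a nilpotent subgroup, and there is no obvious analogue of the paper's one-line conjugation argument to deliver this. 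You flag the ping-pong side as the bottleneck, but the image-side nilpotence is at least as much of an unresolved step in your sketch. In short: the idea is right and the skeleton matches the paper, but the paper's key insight—choose $\U_{s_3\alpha_1}$ and $\U_{s_3\alpha_2}$ precisely so that one Weyl reflection simultaneously renders the image nilpotent and puts the generators in the form the link-distance lemma controls—is the missing ingredient, and without it both halves of your argument remain open.
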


We refer to   \S\ref{section:prelim} for the definition of $\UU_A(\KK)$ and $\U^+_A(\KK)$ over general rings $\KK$.

As a second consequence of Theorem~\ref{thmintro:main}, we   obtain the following characterisation of the residual nilpotence  of $\UU_A(\QQ)$, which broadly generalises the first part of Theorem~\ref{thmintro:A-2-tilde} (note that $\U^+_A(\QQ)$ is always residually nilpotent, see Remark~\ref{rem:U+residually-nilp} below).

\begin{corintro}\label{corintro:residnilp}
The kernel of $\UU_A(\QQ)\twoheadrightarrow\U^+_A(\QQ)$ coincides with $\gamma_{\infty}(\UU_A(\QQ))$. 

In particular, the following conditions are equivalent:
\begin{enumerate}[(i)]
\item  $\UU_A(\QQ)$ is residually nilpotent.
\item  $\UU_A(\QQ)$ is residually torsionfree nilpotent. 
\item  $A$ is $3$-spherical.
\end{enumerate}

\end{corintro}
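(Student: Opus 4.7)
The plan is to assemble the statement from results already in hand: Theorem~\ref{thmintro:main} and Corollary~\ref{corintro:isom_completions}, Theorem~\ref{thmintro:injiff3sph}, the residual nilpotence of $\U^+_A(\QQ)$ (Remark~\ref{rem:U+residually-nilp}), and the known fact from \cite{AM97, DM07} that the canonical map $\pi\colon\UU_A(\QQ)\twoheadrightarrow\U^+_A(\QQ)$ is an isomorphism whenever $A$ is $3$-spherical. No new computation should be needed.

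For the first claim ($\Ker\pi=\gamma_\infty(\UU_A(\QQ))$), I would consider the composite
$$\UU_A(\QQ)\xrightarrow{\pi}\U^+_A(\QQ)\longrightarrow\widehat{\U}^+_A(\QQ),$$
where the second arrow is injective because $\U^+_A(\QQ)$ is residually nilpotent. Thus this composite has kernel exactly $\Ker\pi$. On the other hand, by functoriality it also factors through $\UU_A(\QQ)\to\widehat{\UU}_A(\QQ)$, whose kernel is $\gamma_\infty(\UU_A(\QQ))$ by definition of the pro-nilpotent completion; and the induced map $\widehat{\UU}_A(\QQ)\to\widehat{\U}^+_A(\QQ)$ is an isomorphism by Corollary~\ref{corintro:isom_completions}. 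Comparing the two descriptions yields $\Ker\pi=\gamma_\infty(\UU_A(\QQ))$.

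For the equivalences, (ii)$\Rightarrow$(i) is trivial. If (i) holds, then $\gamma_\infty(\UU_A(\QQ))=\{e\}$, so the first part gives $\Ker\pi=\{e\}$ and Theorem~\ref{thmintro:injiff3sph} then forces $A$ to be $3$-spherical, proving (i)$\Rightarrow$(iii). For (iii)$\Rightarrow$(ii), assuming $A$ is $3$-spherical, $\pi$ is an isomorphism by \cite{AM97, DM07}, so it suffices to verify that $\U^+_A(\QQ)$ itself is residually torsionfree nilpotent. By Theorem~\ref{thmintro:main}, each quotient $\gamma_n(\U^+_A(\QQ))/\gamma_{n+1}(\U^+_A(\QQ))$ is a graded piece of $\n^+_{A,\QQ}$, hence a torsionfree abelian group; a short induction along the lower central series (using that a central extension of a torsionfree group by a torsionfree abelian group is torsionfree) shows that each finite-class quotient $\U^+_A(\QQ)/\gamma_n(\U^+_A(\QQ))$ is torsionfree nilpotent, and since $\gamma_\infty(\U^+_A(\QQ))=\{e\}$ this yields (ii).

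The main obstacle is essentially absent at the level of this corollary: all the difficulty has been concentrated in Theorem~\ref{thmintro:main}, and the present argument is a routine assembly of previously established ingredients. The only mild technical point is the standard lemma that a nilpotent group whose lower-central-series factors are torsionfree must itself be torsionfree, which I do not anticipate causing any issue.
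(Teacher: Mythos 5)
Your proposal is correct and assembles the same ingredients as the paper, with only minor repackaging. For the first claim, you reason via Corollary~\ref{corintro:isom_completions} and the factorisation of $\UU_A(\QQ)\to\widehat{\U}^+_A(\QQ)$ through $\widehat{\UU}_A(\QQ)$, whereas the paper argues element-wise: given $g\notin\gamma_\infty(\UU_A(\QQ))$, its class in $\Lie(\UU_A(\QQ))$ is nonzero, hence so is the image in $\Lie(\U^+_A(\QQ))$ by Theorem~\ref{thmintro:main}, so $g\notin\Ker\pi$. These are equivalent in substance. For the equivalences, the paper obtains (i)$\Leftrightarrow$(ii) directly from Remark~\ref{rmk:res-nilp-IFF-tf-res-nilp} (built on Corollary~\ref{corollary:conditionequivrtfnrn}, which already records that $\UU_A(\QQ)/\gamma_n(\UU_A(\QQ))$ is torsionfree), and (i)$\Leftrightarrow$(iii) from the first statement, Theorem~\ref{thmintro:injiff3sph}, and (implicitly) \cite{DM07}. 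You instead close the cycle (i)$\Rightarrow$(iii)$\Rightarrow$(ii)$\Rightarrow$(i), re-proving the torsionfreeness of the lower-central quotients of $\U^+_A(\QQ)$ from Theorem~\ref{thmintro:main} rather than citing the already-available remark; this is a slightly more self-contained but slightly more redundant route, and both are sound.
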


As in Theorem~\ref{thmintro:A-2-tilde}, we shall  construct explicit elements of $\gamma_{\infty}(\UU_A(\QQ))$, see Theorem~\ref{thm:freeproductKMS}.

Finally, as a third consequence of Theorem~\ref{thmintro:main}, we obtain normal forms for the elements of ($2$-spherical) minimal Kac--Moody groups over $\QQ$. Thanks to the Bruhat decomposition of $\G_A(\QQ)$, finding normal forms for the elements $g$ of $\G_A(\QQ)$ reduces to the case where $g\in\U^+_A(\QQ)$. On the other hand, elements $g$ of $\U^{ma+}_A(\QQ)$ do possess a normal form $g=\prod_{x\in\BBB}\exp(\lambda_xx)$ as an infinite product parametrised by an ordered $\QQ$-basis $\BBB$ of $\n^+_{A,\QQ}$ consisting of homogeneous elements $x$, with uniquely determined parameters $\lambda_x\in\QQ$ (see \cite[(8.33) p.212 or Theorem~8.51]{KMGbook}). Since $\U^+_A(\QQ)\subseteq\U^{ma+}_A(\QQ)$, this in principle also gives a normal form for the elements of $\U^+_A(\QQ)$. However, we would like to have a normal form that is ``intrinsic'' to $\U^+_A(\QQ)$ (in particular, the partial products of which the normal form is the limit should belong to $\U^+_A(\QQ)$), and there should be an algorithm allowing to transform the expression of an element of $\U^+_A(\QQ)$ as a product of generators in $\{\U_{\alpha_i}(\QQ) \ | \ i\in I\}$ into its normal form. Theorem~\ref{thmintro:main} allows to achieve this as follows.

Write $e_i$ ($i\in I$) for the Chevalley generators of $\g_{A,\QQ}$ generating $\n^+_{A,\QQ}$. Fix a $\QQ$-basis $\BBB$ of $\n^+_{A,\QQ}$ consisting of elements of the form $x:=[e_{i_1},[e_{i_2},[\dots,e_{i_n}]]]$ (in which case we set $n_x:=n\in\NN$), and for each $\lambda\in\QQ$ set $$u_x(\lambda):=[x_{\alpha_{i_1}}(\lambda),[x_{\alpha_{i_2}}(1),[\dots,x_{\alpha_{i_n}}(1)]]]\in\U^+_A(\QQ)$$ accordingly, where $x_{\alpha_i}\co\QQ\to \U_{\alpha_i}(\QQ)$ is the parametrisation of the root group $\U_{\alpha_i}(\QQ)$ associated with the simple root $\alpha_i$ ($i\in I$). Fix a total order on $\BBB$ such that $n_x<n_y\implies x<y$. 

\begin{corintro}\label{corintro:normalform}
Each $g\in\U^+_A(\QQ)$ can be uniquely written in the form
$$g=\prod_{x\in\BBB}u_x(\lambda_x)\in \widehat{\U}^+_A(\QQ)\quad\textrm{for some $\lambda_x\in\QQ$}$$
with $\prod_{n_x\geq n}u_x(\lambda_x)\in\gamma_n(\U^+_A(\QQ))$ for all $n\in\NN$.
\end{corintro}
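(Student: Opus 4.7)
The plan is to combine Theorem \ref{thmintro:main} with Corollary \ref{corintro:isom_completions}. The key preliminary observation is that for each $x = [e_{i_1},[e_{i_2},[\dots,e_{i_n}]]] \in \BBB$ with $n_x = n$, the iterated group commutator $u_x(\lambda)$ lies in $\gamma_n(\U^+_A(\QQ))$, and its image in $\gamma_n/\gamma_{n+1}$ corresponds, under the $\QQ$-Lie algebra isomorphism $\Lie(\U^+_A(\QQ)) \stackrel{\cong}{\to} \n^+_{A,\QQ}$ of Theorem \ref{thmintro:main}, to $\lambda x$. Verifying this identification is the main (essentially routine) point of the argument: it rests on the fact that $x_{\alpha_i}(\lambda)\gamma_2 \mapsto \lambda e_i$ under the isomorphism, together with the compatibility of Lie brackets with the group commutator bracket in the graded Lie ring.

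For existence, I construct the coefficients $\lambda_x$ inductively on $n$. Set $g_1 := g \in \gamma_1(\U^+_A(\QQ))$. Assuming $\lambda_y$ has been chosen for all $y\in\BBB$ with $n_y < n$ in such a way that
\[
g_n := \Bigl(\prod_{y\in\BBB,\; n_y < n} u_y(\lambda_y)\Bigr)^{-1}\cdot g
\]
(the finite product taken in the prescribed order on $\BBB$) lies in $\gamma_n(\U^+_A(\QQ))$, the class of $g_n$ in $\gamma_n/\gamma_{n+1}$ corresponds via Theorem \ref{thmintro:main} to a unique element of the (finite-dimensional) degree-$n$ component of $\n^+_{A,\QQ}$; expanding this element in the basis $\{x\in\BBB: n_x=n\}$ defines the coefficients $\lambda_x$ for $n_x = n$. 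Since $\gamma_n/\gamma_{n+1}$ is central in $\U^+_A(\QQ)/\gamma_{n+1}$, the product $\prod_{n_x = n} u_x(\lambda_x)$ represents $g_n$ modulo $\gamma_{n+1}$ irrespective of the internal ordering, and hence $g_{n+1}\in\gamma_{n+1}(\U^+_A(\QQ))$ as required.

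By Corollary \ref{corintro:isom_completions}, the pro-nilpotent completion $\widehat{\U}^+_A(\QQ)$ exists. Writing $P_{<n} := \prod_{n_x < n} u_x(\lambda_x) \in \U^+_A(\QQ)$, the relation $P_{<n}^{-1}\cdot g = g_n \in \gamma_n(\U^+_A(\QQ))$ exhibits $(P_{<n})_n$ as a Cauchy sequence for the filtration $(\gamma_n)$, converging to $g$ in $\widehat{\U}^+_A(\QQ)$. Because the total order on $\BBB$ places every element of degree $<n$ before every element of degree $\geq n$, the infinite product $\prod_{x\in\BBB}u_x(\lambda_x)$ factors as $P_{<n}\cdot \prod_{n_x \geq n} u_x(\lambda_x)$ in $\widehat{\U}^+_A(\QQ)$, so $\prod_{n_x \geq n} u_x(\lambda_x) = P_{<n}^{-1}\cdot g = g_n \in \gamma_n(\U^+_A(\QQ))$, confirming the tail condition. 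For uniqueness, $\U^+_A(\QQ)$ is residually nilpotent by Remark \ref{rem:U+residually-nilp} and so embeds in $\widehat{\U}^+_A(\QQ)$; if $(\mu_x)$ is another admissible sequence for $g$ and $P'_{<n}$ denotes its partial products, then $P_{<n}\equiv P'_{<n}\pmod{\gamma_n}$ for every $n$, and an induction on $n$ using the uniqueness of the basis expansion in the degree-$(n-1)$ component of $\n^+_{A,\QQ}$ at each step forces $\lambda_x = \mu_x$ throughout.
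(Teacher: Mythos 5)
Your proof is correct, and it takes a genuinely different route from the paper's. The paper is brief and ``top-down'': it cites \cite[Theorem~8.51]{KMGbook} to get that every $g\in\U^{ma+}_A(\QQ)$ has a unique expression $g=\prod_{x\in\BBB}u_x(\lambda_x)$, and then observes that the tail products lie in $\U^+_A(\QQ)\cap\U^{ma}_n(\QQ)$, which Corollary~\ref{cor:Gamma_nUma_n} identifies with $\gamma_n(\U^+_A(\QQ))$. You instead construct the coefficients ``bottom-up'' inside $\U^+_A(\QQ)$ itself, peeling off one degree at a time via the graded isomorphism $\Lie(\U^+_A(\QQ))\cong\n^+_{A,\QQ}$ of Theorem~\ref{thmintro:main}, and then checking convergence of the partial products. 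Both approaches ultimately lean on Theorem~\ref{thmintro:main} (Corollary~\ref{cor:Gamma_nUma_n} is derived from it), but yours is more self-contained: it does not invoke the $u_x$-version of the normal form in $\U^{ma+}_A(\QQ)$ from \cite{KMGbook}, at the cost of a somewhat longer inductive argument. Two minor remarks: the citation of Corollary~\ref{corintro:isom_completions} for the ``existence'' of $\widehat{\U}^+_A(\QQ)$ is superfluous (the Hausdorff completion always exists; what you actually use is that $\U^+_A(\QQ)\hookrightarrow\widehat{\U}^+_A(\QQ)$, i.e.\ residual nilpotence from Remark~\ref{rem:U+residually-nilp}); and the identification of the class of $u_x(\lambda)$ in $\gamma_n/\gamma_{n+1}$ with $\lambda x$, which you rightly flag as the key computation, is indeed immediate from the definition of the Lie ring bracket together with \eqref{eqn:morphismton}.
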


Note that Corollary~\ref{corintro:normalform} becomes false if $A$ is not $2$-spherical (see Corollary~\ref{cor:normalform}).

%%%%%%%%%%%%%%%%%%%%%%%%%%%%%%%%%%%%%%
%%%%%%%%%%%%%%%%%%%%%%%%%%%%%%%%%%%%%%
%%%%%%%%%%%%%%%%%%%%%%%%%%%%%%%%%%%%%%
%%%%%%%%%%%%%%%%%%%%%%%%%%%%%%%%%%%%%%
%%%%%%%%%%%%%%%%%%%%%%%%%%%%%%%%%%%%%%
%%%%%%%%%%%%%%%%%%%%%%%%%%%%%%%%%%%%%%

\section{Preliminaries}\label{section:prelim}

\subsection{Group commutators}

Let $H$ be a group. Given elements $g_i\in H$, we write $[g_1,g_2]:=[g_1,g_2]\!]:=g_1\inv g_2\inv g_1g_2$. We define, inductively, the iterated commutator 
$$[g_1,g_2,\dots,g_n]\!]:=[g_1,[g_2,\dots,g_n]\!]], $$ 
and we also set $g^h:=h\inv gh$ for $g,h\in H$.

For each $n\in\NN$, we define inductively the subgroups $\gamma_n(H)$ of the lower central series of $H$ by $\gamma_1(H):=H$ and $\gamma_{n+1}(H)=[H,\gamma_n(H)]$. We also denote by $\gamma_{\infty}(H):=\bigcap_{n\geq 1}\gamma_n(H)$ the kernel of the canonical map $H\to \widehat{H}$, where $\widehat{H}$ is the {\bf pro-nilpotent completion} of $H$, that is, the Hausdorff completion of $H$ with respect to the filtration $(\gamma_n(H))_{n\in\NN}$. The group $H$ is {\bf residually nilpotent} (resp. {\bf residually torsionfree nilpotent}) if every nontrivial element $h\in H$ remains nontrivial in some nilpotent (resp. torsionfree nilpotent) quotient of $H$. Thus, $H$ is residually nilpotent if and only if $\gamma_{\infty}(H)=\{1\}$.

Let $(H_n)_{n\geq 1}$ be a sequence of normal subgroups of $H$ with $H_1=H$, $H_n\supseteq H_{n+1}$ and $[H_m,H_n]\subseteq H_{m+n}$ for all $m,n\geq 1$ (for instance, one could take $H_n=\gamma_n(H)$ for all $n$). Note then that $\gamma_n(H)\subseteq H_n$ for all $n\geq 1$. 

The space $\Lie(H;(H_n)_{n\geq 1}):=\bigoplus_{n\geq 1}H_n/H_{n+1}$  is a Lie ring, with respect to the Lie bracket $$[xH_{n+1},yH_{m+1}]:=[x,y]H_{m+n+1}\quad\textrm{for all $x\in H_{n}$ and $y\in H_{m}$.}$$ This follows from the usual commutator identities
\begin{align}
&[x,zy]=[x,y]\cdot [x,z]^y\quad\textrm{and}\quad [xz,y]=[x,y]^z\cdot [z,y]\quad\textrm{for all $x,y,z\in H$,}\label{eqn:commutatorproduct}\\
&[[x,y\inv],z]^y=[y,[z,x\inv]]^x\cdot [x,[y,z\inv]]^z\quad\textrm{for all $x,y,z\in H$,}\label{eqn:HallWitt}
\end{align}
which imply that for all $x\in H_m$, $y\in H_n$, $z\in H_p$ and $r\in\ZZ$,
\begin{align}
&[x^r,y]\equiv [x,y]^r\equiv [x,y^r] \ \modulo H_{m+n+1},\label{eqn:commutatorpower}\\
&[x,[y,z]]\cdot [y,[z,x]]\cdot [z,[x,y]]\equiv 1 \modulo H_{m+n+p+1}.
\end{align}
We set for short $\Lie(H):=\Lie(H; (\gamma_n(H))_{n\geq 1})$. 

Note that any group morphism $G\to H$ induces a Lie ring morphism $\Lie(G)\to\Lie(H;(H_n)_{n\geq 1})$ since $\gamma_n(G)$ is mapped into $\gamma_n(H)\subseteq H_n$ for all $n$.

%%%%%%%%%%%%%%%%%%%%%%%%%%%%%%%%%%%%%%
%%%%%%%%%%%%%%%%%%%%%%%%%%%%%%%%%%%%%%
%%%%%%%%%%%%%%%%%%%%%%%%%%%%%%%%%%%%%%

\subsection{Kac--Moody algebras}\label{subsection:KMA}

Let $A=(a_{ij})_{i,j\in I}$ be a {\bf generalised Cartan matrix} (hereafter abbreviated \textbf{GCM}), and let $\Delta=\Delta(A)$ be the corresponding Kac--Moody root system, with set of {\bf simple roots} $\Pi=\{\alpha_i \ | \ i\in I\}$ (see \cite[\S 1.1--1.3]{Kac}). Set $Q:=\sum_{i\in I}\ZZ\alpha_i$ and $Q_+:=\sum_{i\in I}\NN\alpha_i$. For $\alpha=\sum_{i\in I}n_i\alpha_i\in Q$, we let $\height(\alpha):=\sum_{i\in I}n_i\in\ZZ$ denote its {\bf height}.

Let $\n^+_{\QQ}=\n^+_{A,\QQ}$ denote the $\QQ$-Lie algebra defined by the presentation with generators $e_i$ ($i\in I$) and relations
\begin{equation}
(\ad e_i)^{1-a_{ij}}e_j=0\quad\textrm{for all $i,j\in I$ with $i\neq j$}.
\end{equation}
Then $\n^+_{\QQ}$ admits a $Q_+$-grading
$$\n^+_{\QQ}=\bigoplus_{\alpha\in Q_+}\g_{\alpha}$$
defined by letting $e_i$ have degree $\alpha_i$ for each $i\in I$. Moreover, $\g_{\alpha}\neq \{0\}$ if and only if $\alpha\in\Delta^+:=\Delta\cap Q_+$. For a nonzero element $x\in\g_{\alpha}$, we set $\deg(x):=\alpha$.

The {\bf Weyl group} of $A$ is the group $W=W_A$ of $\ZZ$-linear automorphisms of $Q$ generated by the {\bf simple reflections}
$$s_i\co Q\to Q:\alpha_j\mapsto \alpha_j-a_{ij}\alpha_i\quad\textrm{for $i,j\in I$}.$$
It is a Coxeter group with Coxeter generating set $S=\{s_i \ | \ i\in I\}$, where the order of $s_is_j$ for $i\neq j$ is given by $m_{ij}=2,3,4,6$ or $\infty$, according to whether $a_{ij}a_{ji}=0,1,2,3$ or $\geq 4$ respectively (see \cite[Chapter~3]{Kac}). The set $\Delta^{re}:=W\Pi\subseteq Q$ of {\bf real roots} is contained in $\Delta$, and we set $\Delta^{re+}:=\Delta^{re}\cap \Delta^+$.

The GCM $A$ is {\bf spherical} if $W_A$ is finite (that is, if it is a Cartan matrix). For $J\subseteq I$, set $A_J:=(a_{ij})_{i,j\in J}$. For $r\geq 2$, we call $A$ {\bf $r$-spherical} if $A_J$ is spherical for all $J\subseteq I$ with $|J|\leq r$. The cardinality of $I$ is called the {\bf rank} of $A$. If $A$ is spherical of rank $2$, then $\Delta$ is a rank $2$ root system of type $A_1\times A_1$, $A_2$, $B_2$, or $G_2$, and we also say that $A$ is of that {\bf type}.

%%%%%%%%%%%%%%%%%%%%%%%%%%%%%%%%%%%%%%
%%%%%%%%%%%%%%%%%%%%%%%%%%%%%%%%%%%%%%
%%%%%%%%%%%%%%%%%%%%%%%%%%%%%%%%%%%%%%

\subsection{Minimal Kac--Moody groups}\label{subsection:MKMG}
Let $A=(a_{ij})_{i,j\in I}$ be a GCM and let $\KK$ be a (commutative, unital) ring. Let $\G_A(\KK)$ be the constructive Tits functor of (simply connected) type $A$ over $\KK$, as in \cite{Tits87} or \cite[Definition~7.47]{KMGbook}. It is a certain amalgamated product of the  {\bf root groups} $\U_{\alpha}(\KK)\cong (\KK,+)$ indexed by the real roots $\alpha\in\Delta^{re}$. We denote by $x_{\alpha}\co \KK\to\U_{\alpha}(\KK)$ the group isomorphism parametrising $\U_{\alpha}(\KK)$. We also set $$\U^+_A(\KK):=\langle \U_{\alpha}(\KK) \ | \ \alpha\in\Delta^{re+}\rangle\subseteq\G_A(\KK).$$

For $i\in I$, we set $\widetilde{s}_i:=x_{\alpha_i}(1)x_{-\alpha_i}(1)x_{\alpha_i}(1)\in\G_A(\KK)$. Then 
\begin{equation}\label{eqn:siactiononrootgroups}
\widetilde{s}_i\U_{\alpha}(\KK)\widetilde{s}_i\inv=\U_{s_i\alpha}(\KK)\quad\textrm{for all $\alpha\in\Delta^{re}$ and $i\in I$.}
\end{equation}
For $i\in I$ and $a\in\KK$, we will also write for short $x_i(a):=x_{\alpha_i}(a)$ and $x_i:=x_i(1)$.

%%%%%%%%%%%%%%%%%%%%%%%%%%%%%%%%%%%%%%
%%%%%%%%%%%%%%%%%%%%%%%%%%%%%%%%%%%%%%
%%%%%%%%%%%%%%%%%%%%%%%%%%%%%%%%%%%%%%

\subsection{Maximal Kac--Moody groups}
Let $A=(a_{ij})_{i,j\in I}$ be a GCM. Let $\U^{ma+}_A$ be the affine group scheme of type $A$ defined in \cite[\S 8.5.1--8.5.2]{KMGbook}. The group $\U^{ma+}_A(\QQ)$ can be described as follows. For each $\alpha\in\Delta^+$, choose a $\QQ$-basis $\BBB_{\alpha}$ of $\g_{\alpha}$, and set $\BBB=\bigcup_{\alpha\in\Delta_+}\BBB_{\alpha}$. Choose a total order on $\BBB$ (say, such that $\height(\deg(x))<\height(\deg(y))\implies x<y$).
Consider the completion $\widehat{\UU}_{\QQ}(\n^+_{\QQ})$ of the universal enveloping algebra $\UU_{\QQ}(\n^+_{\QQ})$ of $\n^+_{\QQ}$ with respect to its natural $\NN$-gradation, induced by the $\NN$-gradation $\n^+_{\QQ}=\bigoplus_{n\in\NN}(\oplus_{\height(\alpha)=n}\g_{\alpha})$ of $\n^+_{\QQ}$. Then $\U^{ma+}_A(\QQ)$ is the subgroup of the multiplicative group of invertible elements $g$ of $\widehat{\UU}_{\QQ}(\n^+_{\QQ})$ that can be written as a (possibly infinite) product
\begin{equation}
g=\prod_{x\in\BBB}\exp(\lambda_xx)\in\widehat{\UU}_{\QQ}(\n^+_{\QQ})\quad\textrm{for some $\lambda_x\in \QQ$.}
\end{equation}
Moreover, each $g\in \U^{ma+}_A(\QQ)$ has a unique expression in the form of such a product. The group $\U^{ma+}_A(\QQ)$ is a complete Hausdorff topological group for the topology defined by the filtration $(\U^{ma}_n(\QQ))_{n\in\NN}$, where $\U^{ma}_n(\QQ)$ is the normal subgroup of $\U^{ma+}_A(\QQ)$ defined by
$$\U^{ma}_n(\QQ):=\Big\{\prod_{x\in\BBB, \ \height(\deg(x))\geq n}\exp(\lambda_xx) \ \Big| \ \lambda_x\in\QQ\Big\}\subseteq \U^{ma+}_A(\QQ).$$

There is an injective group morphism 
\begin{equation}\label{eq:U+->Uma+}
\U^+_A(\QQ)\to\U^{ma+}_A(\QQ)
\end{equation}
mapping $\U_{\alpha}(\QQ)$ to $\exp(\g_{\alpha})$ for each $\alpha\in\Delta^{re+}$, and one chooses the basis $\BBB_{\alpha}=\{e_{\alpha}\}$ for $\alpha\in\Delta^{re+}$ so that $x_{\alpha}(r)$ is mapped to $\exp(re_{\alpha})$ for all $r\in\QQ$. 
We then identify $\U^+_A(\QQ)$ with a subgroup of $\U^{ma+}_A(\QQ)$; by \cite[Corollary~8.91]{KMGbook}, it is dense in $\U^{ma+}_A(\QQ)$.

Since $[\U^{ma}_n(\QQ),\U^{ma}_m(\QQ)]\subseteq \U^{ma}_{m+n}(\QQ)$ for all $m,n$ by \cite[Lemma~8.58(5)]{KMGbook}, the space $$\Lie(\U^{ma+}_A(\QQ);(\U^{ma}_n(\QQ))_{n\geq 1})=\bigoplus_{n\geq 1}\U^{ma}_n(\QQ)/\U^{ma}_{n+1}(\QQ)$$
is a Lie ring, and the map (\ref{eq:U+->Uma+}) induces a  
Lie ring morphism
\begin{equation*}
\Lie(\U^+_A(\QQ))\to \bigoplus_{n\geq 1}\U^{ma}_n(\QQ)/\U^{ma}_{n+1}(\QQ).
\end{equation*}

Observe also that $\bigoplus_{n\geq 1}\U^{ma}_n(\QQ)/\U^{ma}_{n+1}(\QQ)$ is naturally a $\QQ$-Lie algebra isomorphic to $\n^+_{\QQ}$, where the isomorphism is given by the assignment 
\begin{equation*}
\exp(\lambda_xx)\U^{ma}_{n+1}(\QQ)\mapsto \lambda_xx\in \n^+_{\QQ}
\end{equation*}
for $x\in\BBB$ with $\height(\deg(x))=n$ and $\lambda_x\in\QQ$ (see  \cite[Lemma~8.58(6)]{KMGbook}). In particular, the map (\ref{eq:U+->Uma+}) induces a Lie ring morphism
\begin{equation}\label{eqn:morphismton}
\Lie(\U^+_A(\QQ))\to\n^+_{\QQ}:x_i(r)\gamma_2(\U^+_A(\QQ))\mapsto re_i\quad\textrm{for all $i\in I$ and $r\in\QQ$.}
\end{equation}

\begin{remark}\label{rem:U+residually-nilp}
Since $\gamma_n(\U^+_A(\QQ))\subseteq \U^{ma}_n(\QQ)$ for all $n$ and $\bigcap_{n\geq 1}\U^{ma}_n(\QQ)=\{1\}$, the group $\U^+_A(\QQ)$ is residually nilpotent
\end{remark}

%%%%%%%%%%%%%%%%%%%%%%%%%%%%%%%%%%%%%%
%%%%%%%%%%%%%%%%%%%%%%%%%%%%%%%%%%%%%%
%%%%%%%%%%%%%%%%%%%%%%%%%%%%%%%%%%%%%%

\subsection{Kac--Moody--Steinberg groups}\label{subsection:KMSG}
Let $A=(a_{ij})_{i,j\in I}$ be a GCM, and let $\KK$ be a ring. Assume that $A$ is $2$-spherical. Thus, for each $i,j\in I$ with $i\neq j$, the rank $2$ GCM $A_{ij}:=A_{\{i,j\}}$ is spherical, of one of the types $A_1\times A_1$, $A_2$, $B_2$, or $G_2$. For $i\neq j$, we consider the subgroup 
\begin{equation}
U_{ij}:=\U^+_{A_{ij}}(\KK)=\langle \U_{\gamma}(\KK) \ | \ \gamma\in \Delta^{+}_{ij}:=\Delta^{+}\cap(\NN\alpha_i+\NN\alpha_j)\rangle
\end{equation}
 of $\U^+_A(\KK)$, as well as its subgroups $U_i:=\U_{\alpha_i}(\KK)$ and $U_j:=\U_{\alpha_j}(\KK)$. We define the {\bf KMS group} $\UU_A(\KK)$ of type $A$ over $\KK$ as the inductive limit of the inductive system of groups $\{U_i, U_{ij} \ | \ i,j\in I, \ i\neq j\}$ with respect to the inclusion maps $U_i\to U_{ij}$ and $U_j\to U_{ij}$. 

There is a natural group morphism
\begin{equation}\label{eqn:KMStoU+}
\UU_A(\KK)\to \U_A^+(\KK)
\end{equation}
mapping $U_i$ to $\U_{\alpha_i}(\KK)$ and $U_{ij}$ to $\U^+_{A_{ij}}(\KK)$. In particular, the subgroups $U_i$ and $U_{ij}$ inject in $\UU_A(\KK)$ (as they do in $\U_A^+(\KK)$), and for $\gamma\in \Delta^{+}_{ij}$, we again use the notation $x_{\gamma}\co \KK\to \UU_A(\KK)$ for the parametrisation of $\U_{\gamma}(\KK)\subseteq U_{ij}\subseteq \UU_A(\KK)$, as well as its shorthands $x_i(a):=x_{\alpha_i}(a)$ and $x_i:=x_i(1)$ for $i\in I$ and $a\in\KK$.
Thus, the map (\ref{eqn:KMStoU+}) is given by the assignment $x_{\gamma}(a)\mapsto x_{\gamma}(a)$ for all $a\in\KK$ and $\gamma\in \Delta^{+}_{ij}$ ($i\neq j$).

When $\KK$ is a field of order $\geq 4$, both $\UU_A(\KK)$ and $\U^+_A(\KK)$ are generated by the simple root groups $x_i(\KK)$ ($i\in I$), and hence the map (\ref{eqn:KMStoU+}) is surjective (see \cite{AM97}). When $\KK$ is a field of order $\geq 5$ and $A$ is $3$-spherical, then the map (\ref{eqn:KMStoU+}) is also injective, as follows from \cite{DM07} (see also \cite[Prop.~3.7]{KMSHDX} for a more explicit proof). In particular, if $\KK=\QQ$, then the map (\ref{eqn:KMStoU+}) is surjective, and an isomorphism when $A$ is $3$-spherical.

%%%%%%%%%%%%%%%%%%%%%%%%%%%%%%%%%%%%%%
%%%%%%%%%%%%%%%%%%%%%%%%%%%%%%%%%%%%%%
%%%%%%%%%%%%%%%%%%%%%%%%%%%%%%%%%%%%%%

\subsection{Coset graphs}\label{subsection:CG}

Let $A=(a_{ij})_{i,j\in I}$ be a $2$-spherical GCM and let $\KK$ be a ring. Let $i,j\in I$ with $i\neq j$. The {\bf coset graph} $\Gamma(U_{ij}; U_i, U_j)$ is the bipartite graph with vertex set $U_{ij}/U_i \sqcup U_{ij}/U_j$ and edge set $U_{ij}$, where the edge $g\in U_{ij}$ joins the vertices $gU_i$ and $gU_j$. In other words, two distinct edges $g,h\in U_{ij}$ are incident if and only if $g=hx_k(a)$ for some $k\in\{i,j\}$ and $a\in \KK$.

In particular, the distance in $\Gamma(U_{ij}; U_i, U_j)$ from an edge $g\in U_{ij}$ to the base edge represented by the neutral element $1$ is the smallest number $n$ such that $g$ can be written as a word of length $n$ in the elements of $U_i \cup U_j$, i.e. such that $g$ has the form
$$g = x_{i'}(a_1) x_{j'}(a_2) x_{i'}(a_3) x_{j'}(a_4) \dots , $$
where $\{i',j'\}=\{i,j\}$ and $a_r \in \KK$ for all $r \in \{1, \dots, n\}$. 

Similarly, the girth (i.e. the length of a shortest cycle) of $\Gamma(U_{ij}; U_i, U_j)$ is the smallest number $n\geq 1$ such that 
$$1 = x_{i'}(a_1) x_{j'}(a_2) x_{i'}(a_3) x_{j'}(a_4) \dots , $$
where $\{i',j'\}=\{i,j\}$ and $a_r \in \KK\setminus\{0\}$ for all $r \in \{1, \dots, n\}$.

%%%%%%%%%%%%%%%%%%%%%%%%%%%%%%%%%%%%%%
%%%%%%%%%%%%%%%%%%%%%%%%%%%%%%%%%%%%%%
%%%%%%%%%%%%%%%%%%%%%%%%%%%%%%%%%%%%%%

\subsection{Spherical rank $2$ subgroups}\label{subsection:SR2S}

We recall that if $A=\begin{psmallmatrix}2&a_{12}\\ a_{21}&2\end{psmallmatrix}$ is a spherical GCM of rank $2$ and $\KK$ is a ring, and if $\Delta^+=\{\gamma_1,\dots,\gamma_n\}$ is an enumeration of $\Delta^+=\Delta^{re+}$, then any element $g\in\U^+_A(\KK)=\UU_A(\KK)$ has a unique expression as a product 
\begin{equation}\label{eqn:normalformrank2}
g=\prod_{i=1}^nx_{\gamma_i}(a_i)\quad\textrm{with $a_1,\dots,a_n\in\KK$}.
\end{equation}
Moreover, $\UU_A(\KK)$ has a presentation with generators the subgroups $\U_{\gamma}(\KK)$ ($\gamma\in\Delta^+$) and relations given by commutator relations of the form
\begin{equation}\label{eqn:commutationrel}
[x_{\alpha}(a),x_{\beta}(b)]=\prod_{\gamma=r\alpha+s\beta\in ]\alpha,\beta[}x_{\gamma}(C^{\alpha\beta}_{rs}a^rb^s)\quad\textrm{for all $a,b\in\KK$}
\end{equation}
for each pair $\{\alpha,\beta\}\subseteq\Delta^+$ of distinct roots, where $$]\alpha,\beta[:=\{r\alpha+s\beta\in\Delta^+ \ | \ r,s\geq 1\}$$ and the $C^{\alpha\beta}_{rs}$ are integers determined by $A$ (see \cite[Exp.~XXIII, \S3]{SGA}).

Finally, note that the commutation relations (\ref{eqn:commutationrel}) imply that 
\begin{equation}\label{eqn:SerreinLieUA}
[x_i(\KK),\dots,x_i(\KK),x_j(\KK)]\!]\in\gamma_{|a_{ij}|+3}(\UU_A(\KK))\quad\textrm{($|a_{ij}|+1$ terms ``$x_i(\KK)$'')}
\end{equation} 
for $(i,j)\in\{(1,2),(2,1)\}$.

%%%%%%%%%%%%%%%%%%%%%%%%%%%%%%%%%%%%%%
%%%%%%%%%%%%%%%%%%%%%%%%%%%%%%%%%%%%%%
%%%%%%%%%%%%%%%%%%%%%%%%%%%%%%%%%%%%%%
%%%%%%%%%%%%%%%%%%%%%%%%%%%%%%%%%%%%%%
%%%%%%%%%%%%%%%%%%%%%%%%%%%%%%%%%%%%%%
%%%%%%%%%%%%%%%%%%%%%%%%%%%%%%%%%%%%%%

\section{Free amalgamated products}\label{section:FAP}
For subgroups $C,H$ of a group $G$, define recursively the subgroups $C^{(n)}(H)$ of $G$ for $n\in \NN$ by $C^{(0)}(H):=H$ and $C^{(n+1)}(H):=[C,C^{(n)}(H)]$. 

\begin{lemma}\label{lemma:CHG}
Let $C,H$ be subgroups of a group $G$. Assume that $[C,H]\subseteq H$ and that $C^{(N)}(H)\subseteq\gamma_2(H)$ for some $N\geq 1$. Then 
$C^{(Nn-n+1)}(\gamma_n(H))\subseteq\gamma_{n+1}(H)$ 
for all $n\geq 1$.
\end{lemma}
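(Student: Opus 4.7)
My plan is to proceed by induction on $n \geq 1$. The base case $n = 1$ reads $C^{(N)}(\gamma_1(H)) = C^{(N)}(H) \subseteq \gamma_2(H)$, which is exactly the hypothesis. For the inductive step, assume $C^{(Nn - n + 1)}(\gamma_n(H)) \subseteq \gamma_{n+1}(H)$; setting $m := N(n+1) - (n+1) + 1 = N(n+1) - n$, the goal becomes $C^{(m)}(\gamma_{n+1}(H)) \subseteq \gamma_{n+2}(H)$.

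The key step is an auxiliary \emph{binomial-type distribution}: when $m$ iterated $C$-commutators are applied to an element $[h, k] \in [H, \gamma_n(H)] = \gamma_{n+1}(H)$, each of the $m$ copies of $C$ should ``land'' either inside $h$ or inside $k$. Concretely, I would prove by a secondary induction on $m \geq 0$ that
\begin{equation*}
C^{(m)}(\gamma_{n+1}(H)) \subseteq \gamma_{n+2}(H) \cdot \prod_{m_1 + m_2 = m} [C^{(m_1)}(H), C^{(m_2)}(\gamma_n(H))].
\end{equation*}
Since $[C, H] \subseteq H$, the subgroup $C$ normalises every characteristic subgroup of $H$; hence $\gamma_{n+2}(H)$ is normal in $G_0 := \langle C, H \rangle$. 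The inductive step on $m$ follows from the Hall--Witt identity (\ref{eqn:HallWitt}): applied to the triple $(C, C^{(m_1)}(H), C^{(m_2)}(\gamma_n(H)))$ in $G_0 / \gamma_{n+2}(H)$, the three-subgroup lemma shows that $[C, [C^{(m_1)}(H), C^{(m_2)}(\gamma_n(H))]]$ is contained, modulo $\gamma_{n+2}(H)$, in the product of $[C^{(m_1+1)}(H), C^{(m_2)}(\gamma_n(H))]$ and $[C^{(m_1)}(H), C^{(m_2+1)}(\gamma_n(H))]$, both of which occur in the decomposition for $m+1$.

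Granted the binomial inclusion, I conclude by pigeonhole. For any split $m_1 + m_2 = m = N(n+1) - n$, one must have $m_1 \geq N$ or $m_2 \geq Nn - n + 1$: otherwise $m \leq (N - 1) + (Nn - n) = m - 1$, a contradiction. If $m_1 \geq N$, then $C^{(m_1)}(H) \subseteq C^{(N)}(H) \subseteq \gamma_2(H)$, so the corresponding factor lies in $[\gamma_2(H), \gamma_n(H)] \subseteq \gamma_{n+2}(H)$. If $m_2 \geq Nn - n + 1$, the primary inductive hypothesis gives $C^{(m_2)}(\gamma_n(H)) \subseteq \gamma_{n+1}(H)$, so the factor lies in $[H, \gamma_{n+1}(H)] = \gamma_{n+2}(H)$. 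Every term therefore lands in $\gamma_{n+2}(H)$, completing the induction on $n$.

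The main obstacle is the binomial inclusion itself: a priori, the three-subgroup identity $[[A, B], D] \subseteq [[B, D], A] \cdot [[D, A], B]$ holds only up to $G_0$-normal closure, so one must verify that this closure does not enlarge the subgroups in play. The judicious choice of $\gamma_{n+2}(H)$ as the ``error'' subgroup is what makes this work: each commutator subgroup $[C^{(m_1)}(H), C^{(m_2)}(\gamma_n(H))]$ lies in $\gamma_{n+1}(H)$, which is $H$-central modulo $\gamma_{n+2}(H)$, so $H$-conjugation leaves it unchanged in the quotient; while $C$-conjugation preserves it outright, since for $c \in C$ and $u \in C^{(m_i)}(H)$ one has $cuc\inv = u \cdot [u, c\inv]$ with $[u, c\inv] \in C^{(m_i+1)}(H) \subseteq C^{(m_i)}(H)$ (and similarly for $C^{(m_2)}(\gamma_n(H))$). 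Hence the relevant $G_0$-normal closure coincides, modulo $\gamma_{n+2}(H)$, with the subgroups themselves, and the Hall--Witt expansion promotes the element-level identity to the desired subgroup inclusion.
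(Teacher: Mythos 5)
Your proof is correct and follows essentially the same strategy as the paper's: both establish the binomial-type inclusion $C^{(r)}(\gamma_{n+1}(H)) \subseteq \prod_{i+j=r}[C^{(i)}(H),C^{(j)}(\gamma_n(H))]\cdot\gamma_{n+2}(H)$ by induction using the Hall--Witt identity, and then conclude by the splitting argument (every decomposition $i+j = N(n+1)-n$ has $i\geq N$ or $j\geq Nn-n+1$). The one small deviation is in the Hall--Witt step: the paper first derives a version carrying a residual term $[C,[C^{(i)}(H),C^{(j+1)}(\gamma_n(H))]]$ and eliminates it by iteration (pushing $j$ past $Nn-n+1$ so that the outer inductive hypothesis kills it), whereas you bypass that iteration by observing directly that each $[C^{(k)}(H), C^{(\ell)}(\gamma_n(H))]$ is $C$-normalised and $H$-central modulo $\gamma_{n+2}(H)$, so the normal closure in the three-subgroup lemma does not enlarge the target --- a valid and marginally more economical treatment of the same technical point.
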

\begin{proof}
By hypothesis, the group $C$ normalises $H$. Therefore we have  $[C,\gamma_n(H)]\subseteq \gamma_n(H)$ for all $n\in\NN$. We prove the claim by induction on $n\geq 1$. For $n=1$, this holds by assumption. Assume now that 
\begin{equation}\label{eqn:CNnn1induction}
C^{(Nn-n+1)}(\gamma_n(H))\subseteq\gamma_{n+1}(H)
\end{equation}
for some $n\geq 1$.

It follows from the Hall--Witt identity (\ref{eqn:HallWitt}) that for all $i,j\in\NN$,
\begin{align*}
[C,[C^{(i)}(H),C^{(j)}(\gamma_n(H))]]\subseteq & [C^{(i+1)}(H),C^{(j)}(\gamma_n(H))]\cdot[C^{(i)}(H),C^{(j+1)}(\gamma_n(H))]\cdot\\
&[C,[C^{(i)}(H),C^{(j+1)}(\gamma_n(H))]]\cdot\gamma_{n+2}(H).
\end{align*}
Applying the above formula repeatedly to its right-hand side and using the fact that $C^{(r+1)}(H)\subseteq C^{(r)}(H)$ and $C^{(r+1)}(\gamma_n(H))\subseteq C^{(r)}(\gamma_n(H))$ for all $r\in\NN$, we deduce that for all $i,j\in\NN$,
\begin{align*}
[C,[C^{(i)}(H),C^{(j)}(\gamma_n(H))]]\subseteq & [C^{(i+1)}(H),C^{(j)}(\gamma_n(H))]\cdot[C^{(i)}(H),C^{(j+1)}(\gamma_n(H))]\cdot\\
&[C,[C^{(i)}(H),C^{(j+r)}(\gamma_n(H))]]\cdot\gamma_{n+2}(H)
\end{align*}
for all $r\geq 1$. In view of (\ref{eqn:CNnn1induction}), it follows that 
\begin{align*}
[C,[C^{(i)}(H),C^{(j)}(\gamma_n(H))]]\subseteq & [C^{(i+1)}(H),C^{(j)}(\gamma_n(H))]\cdot[C^{(i)}(H),C^{(j+1)}(\gamma_n(H))]\cdot\\
&\gamma_{n+2}(H)
\end{align*}
for all $i,j\in\NN$. Using (\ref{eqn:commutatorproduct}), we then get by induction on $r\geq 1$ that
\begin{equation}
C^{(r)}(\gamma_{n+1}(H))\subseteq \prod_{i+j=r}[C^{(i)}(H),C^{(j)}(\gamma_n(H))]\cdot\gamma_{n+2}(H).
\end{equation}
Since $C^{(N)}(H)\subseteq\gamma_2(H)$ and in view of (\ref{eqn:CNnn1induction}),
we conclude that 
$$C^{((Nn-n+1)+N-1)}(\gamma_{n+1}(H))\subseteq\gamma_{n+2}(H).$$
This completes the induction step.
\end{proof}

\begin{lemma}\label{lemma:rn}
Let $G$ be a group, and let $H,C$ be subgroups of $G$ such that $G=H\rtimes C$. Assume that $C$ is abelian and that $C^{(N)}(H)\subseteq\gamma_2(H)$ for some $N\geq 2$. Then $$\gamma_{r_n}(G)\subseteq\gamma_{n+1}(H)\quad\textrm{for all $n\geq 1$},$$
where $r_n:=1+n+(N-1)n(n+1)/2$.
\end{lemma}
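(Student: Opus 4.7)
The plan is to induct on $n\geq 1$. The base case $n=1$ requires $\gamma_{N+1}(G)\subseteq\gamma_2(H)$ (since $r_1=N+1$), and the step $n\to n+1$ relies on Lemma~\ref{lemma:CHG} applied at level $n+1$. In both cases, the heart of the argument is the same kind of auxiliary subclaim, proved by an internal induction on a parameter $k$.

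For the base case, I would work in $\overline G:=G/\gamma_2(H)$, which is a semidirect product $\overline H\rtimes C$ of two abelian groups. An internal induction on $k\geq 1$ would establish
\[
\gamma_{k+1}(\overline G)\subseteq C^{(k)}(H)\gamma_2(H)/\gamma_2(H).
\]
The starting case $k=1$ follows from a direct expansion of $[\overline hc,\overline h'c']$ using that $\overline H$ and $C$ are both abelian, while the step $k\to k+1$ uses $[hc,x]=[h,x]^c[c,x]$ with $[h,x]\in[H,H]=\gamma_2(H)$ (which vanishes in $\overline G$) and $[c,x]\in C^{(k+1)}(H)$. Taking $k=N$ and invoking the hypothesis $C^{(N)}(H)\subseteq\gamma_2(H)$ then settles the base case.

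For the inductive step, assume $\gamma_{r_n}(G)\subseteq\gamma_{n+1}(H)$ and pass instead to $\overline G:=G/\gamma_{n+2}(H)$. Here the image of $\gamma_{n+1}(H)$ is \emph{central} in the image of $H$, thanks to $[H,\gamma_{n+1}(H)]=\gamma_{n+2}(H)$, and an induction on $k\geq 0$ identical in spirit yields
\[
\gamma_{r_n+k}(\overline G)\subseteq C^{(k)}(\gamma_{n+1}(H))\gamma_{n+2}(H)/\gamma_{n+2}(H).
\]
Lemma~\ref{lemma:CHG} applied with $n+1$ in place of $n$ provides $C^{(N(n+1)-n)}(\gamma_{n+1}(H))\subseteq\gamma_{n+2}(H)$, so setting $k:=N(n+1)-n$ and recombining gives $\gamma_{r_n+k}(G)\subseteq\gamma_{n+2}(H)$.

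The main obstacle is purely arithmetic rather than conceptual: one must verify that $r_{n+1}-r_n=1+(N-1)(n+1)$ equals $N(n+1)-n=N(n+1)-(n+1)+1$, which is exactly the exponent delivered by Lemma~\ref{lemma:CHG} at level $n+1$. This compatibility is what dictates the quadratic formula $r_n=1+n+(N-1)n(n+1)/2$ in the first place; the rest is careful commutator bookkeeping in well-chosen quotients.
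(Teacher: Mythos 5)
Your argument is correct, and it takes a genuinely different route from the paper's. The paper invokes the structural result of Guaschi--Pereiro ([GP20, Theorem~1.1]), which expresses $\gamma_n(H\rtimes C)$ as $L_n\rtimes\gamma_n(C)$ with an explicit recursion for $L_n$; since $C$ is abelian, the analysis then reduces to bounding the descent of $L_n$. You instead bypass [GP20] entirely: by passing to the quotients $G/\gamma_2(H)$ (for the base case) and $G/\gamma_{n+2}(H)$ (for the inductive step), you turn the image of $\gamma_{n+1}(H)$ into a central subgroup of the image of $H$, so that the only surviving contribution to the lower central series comes from iterated commutation with $C$; this is exactly what your internal inductions track, and they are easily verified using $[hc,x]=[h,x]^c[c,x]$ together with the observation that $C^{(k)}(X)\subseteq X$ for $X\in\{\overline H,\overline{\gamma_{n+1}(H)}\}$. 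Both proofs use Lemma~\ref{lemma:CHG} in identical fashion in the inductive step, and the arithmetic identity $r_{n+1}-r_n = (N-1)(n+1)+1$ is the same. Your approach is more self-contained (no external appeal to the structure of $\gamma_n$ of a semidirect product) at the cost of doing the commutator bookkeeping by hand; the paper's is marginally shorter once [GP20] is accepted as a black box. Your quotient trick also neatly avoids the ``$\cdot\,\gamma_{n+2}(H)$'' error terms that the paper's argument has to carry along.
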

\begin{proof}
It follows from \cite[Theorem~1.1]{GP20} that $\gamma_n(H\rtimes C)=L_n\rtimes \gamma_n(C)$ for all $n\geq 1$, where the subgroup $L_n$ is defined inductively by $L_1:= H$ and, for $n\geq 2$,
\begin{equation}\label{eqn:Lndef}
L_{n}=\langle [\gamma_{n-1}(C),H], [C,L_{n-1}], [H,L_{n-1}]\rangle\subseteq H.
\end{equation}
Note that $r_n\geq r_1=N+1>2$ for all $n\geq 1$. In particular, $\gamma_{r_n}(G)=L_{r_n}$, and it suffices to show that 
\begin{equation}\label{eqn:Lrn}
L_{r_n}\subseteq \gamma_{n+1}(H)\quad\textrm{for all $n\geq 1$.}
\end{equation}

We prove (\ref{eqn:Lrn}) by induction on $n\geq 1$. For $n=1$, we have to show that $L_{N+1}\subseteq\gamma_2(H)$. By (\ref{eqn:Lndef}), we have $L_2=[C,H]\cdot\gamma_2(H)$ and, since $\gamma_2(C)=\{1\}$ by assumption, 
$$L_{r+1}\subseteq [C,L_r]\cdot\gamma_2(H)\quad\textrm{for all $r\geq 2$}.$$
Hence $L_{N+1}\subseteq C^{(N)}(H)\cdot\gamma_2(H)$, yielding the claim. Assume now that (\ref{eqn:Lrn}) holds for some $n\geq 1$. Since $r_n>2$ and $\gamma_2(C)=1$, it follows from (\ref{eqn:Lndef}) that $L_{m}=\langle [C,L_{m-1}], [H,L_{m-1}]\rangle$ for all $m\geq r_n$, and hence by induction hypothesis
$$L_m\subseteq [C,L_{m-1}]\cdot \gamma_{n+2}(H)\quad\textrm{for all $m\geq r_n+1$.}$$
In particular, $$L_{r_{n+1}}\subseteq C^{(r_{n+1}-r_n)}(L_{r_n})\cdot\gamma_{n+2}(H)\subseteq C^{((N-1)(n+1)+1)}(\gamma_{n+1}(H))\cdot\gamma_{n+2}(H),$$
where the second inclusion follows from the induction hypothesis. Since $$C^{((N-1)(n+1)+1)}(\gamma_{n+1}(H))\subseteq \gamma_{n+2}(H)$$ by Lemma~\ref{lemma:CHG}, this completes the induction step.
\end{proof}

\begin{prop}
Let $A,B$ be subgroups of a group $G$, and suppose that $A=A'\rtimes C$ and $B=B'\rtimes C$ for some subgroups $A',B',C$. Assume that $A',B'$ are residually torsionfree nilpotent and that $C$ is abelian. Assume, moreover, that $C^{(N)}(A)=C^{(N)}(B)=\{1\}$ for some $N\geq 1$. Then the amalgamated product $A*_CB$ is residually nilpotent. 
\end{prop}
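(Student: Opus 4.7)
The plan is to identify the amalgamated product $\Gamma := A *_C B$ with a semidirect product $H \rtimes C$ where $H := A' * B'$, then apply Lemma~\ref{lemma:rn} to this decomposition, with residual nilpotence of $H$ supplied by Mal'cev's theorem on free products of residually torsionfree nilpotent groups. Concretely, the retractions $A = A' \rtimes C \twoheadrightarrow C$ and $B = B' \rtimes C \twoheadrightarrow C$ both restrict to the identity on $C$, so by the universal property of amalgamated products they extend to a retraction $\rho \co \Gamma \twoheadrightarrow C$. A direct comparison of universal properties then shows that $\ker \rho \cong A' * B' =: H$, with $C$ acting on each free factor as prescribed by the ambient semidirect structures, so $\Gamma \cong H \rtimes C$.

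Enlarging $N$ if necessary, I may assume $N \geq 2$; to apply Lemma~\ref{lemma:rn} to the semidirect product $\Gamma = H \rtimes C$ it then remains to verify that $C^{(N)}(H) \subseteq \gamma_2(H)$. Since $C$ normalises both $A'$ and $B'$ inside $H$, repeated use of the commutator identity~(\ref{eqn:commutatorproduct}) shows by induction on $n$ that, for any fixed $c_1, \dots, c_n \in C$, the map
$$\psi_n \co H \to H/\gamma_2(H), \qquad h \mapsto [c_1, [c_2, \dots, [c_n, h] \dots ]] \cdot \gamma_2(H)$$
is a well-defined group homomorphism. Since the abelianisation of a free product is the direct sum of the abelianisations of the factors, $H/\gamma_2(H) \cong (A')^{\mathrm{ab}} \oplus (B')^{\mathrm{ab}}$, so it suffices to check that $\psi_N$ vanishes on the subgroups $A'$ and $B'$ separately; this is immediate from $C^{(N)}(A') \subseteq C^{(N)}(A) = \{1\}$ and the analogue for $B'$.

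Lemma~\ref{lemma:rn} then yields $\gamma_{r_n}(\Gamma) \subseteq \gamma_{n+1}(H)$ for every $n \geq 1$. To conclude, fix $g \in \Gamma \setminus \{1\}$. If $\rho(g) \neq 1$ then $g$ has nontrivial image in the abelian (hence nilpotent) quotient $C$. Otherwise $g \in H$, and Mal'cev's theorem ensures that $H = A' * B'$ is residually torsionfree nilpotent, so $\bigcap_{n \geq 1} \gamma_n(H) = \{1\}$; picking $n$ with $g \notin \gamma_{n+1}(H)$ yields $g \notin \gamma_{r_n}(\Gamma)$, so $g$ survives in the nilpotent quotient $\Gamma/\gamma_{r_n}(\Gamma)$. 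The main technical obstacle is proving that $\psi_n$ is a homomorphism modulo $\gamma_2(H)$; this rests on carefully expanding iterated commutators via~(\ref{eqn:commutatorproduct}) while exploiting the $C$-invariance of $\gamma_2(H)$ to absorb the error terms at each stage of the induction.
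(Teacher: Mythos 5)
Your proof is correct and follows essentially the same route as the paper's: identify $A*_CB$ with $(A'*B')\rtimes C$ via the common presentation, reduce $C^{(N)}(A'*B')\subseteq\gamma_2(A'*B')$ to the hypotheses on $A$ and $B$ by passing to the abelianisation $(A')^{\mathrm{ab}}\oplus(B')^{\mathrm{ab}}$, and conclude via Lemma~\ref{lemma:rn} together with Mal'cev's theorem that $A'*B'$ is residually (torsionfree) nilpotent. The paper states these steps more tersely (in particular it simply asserts $C^{(N)}(A'*B')\subseteq\gamma_2(A'*B')$ without the $\psi_N$ argument), but the underlying structure is identical.
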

\begin{proof}
Note that $A*_CB\cong (A'*B')\rtimes C$ (since these two groups admit the same presentation). Note also that $C^{(N)}(A'*B')\subseteq \gamma_2(A'*B')$. The proposition then follows from Lemma~\ref{lemma:rn} and the fact that $A'*B'$ is residually nilpotent by a result of Mal'cev (see \cite{Mal49} or \cite[Theorem~1.2]{Baum99}).
\end{proof}

 Theorem~\ref{thmintro:free-amalgamated-prod} now follows as an immediate consequence.

%%%%%%%%%%%%%%%%%%%%%%%%%%%%%%%%%%%%%%
%%%%%%%%%%%%%%%%%%%%%%%%%%%%%%%%%%%%%%
%%%%%%%%%%%%%%%%%%%%%%%%%%%%%%%%%%%%%%
%%%%%%%%%%%%%%%%%%%%%%%%%%%%%%%%%%%%%%
%%%%%%%%%%%%%%%%%%%%%%%%%%%%%%%%%%%%%%
%%%%%%%%%%%%%%%%%%%%%%%%%%%%%%%%%%%%%%

\section{Groups generated by copies of $\QQ$}
Throughout this section, we fix a set $I$ and we let $\KK$ denote a (unital) subring of $\QQ$. We also let $H$ be a group generated by copies of $(\KK,+)$ parametrised by $x_i\co\KK\to H$ for $i\in I$, and we set $\Gamma_m:=\gamma_m(H)$ for all $m\geq 1$. We again use the shorthand $x_i:=x_i(1)$ for $i\in I$.

\begin{lemma}\label{lemma:scalarmultQ}
Let $i_1,\dots,i_n\in I$ and $a_1,\dots,a_n\in\KK$. Then 
\begin{align*}
[x_{i_1}(a_1),x_{i_2}(a_2),\dots, x_{i_n}(a_n)]\!]\Gamma_{n+1}= [x_{i_1}(a_1\dots a_n),x_{i_2},\dots,x_{i_n}]\!] \Gamma_{n+1}.
\end{align*}
\end{lemma}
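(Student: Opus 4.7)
The plan is to reduce the statement to the case where all scalars $a_k$ lie in $\ZZ$, in which case the identity becomes a direct consequence of the $\ZZ$-multilinearity of the Lie bracket on $\Lie(H)$. The reduction is effected by introducing auxiliary ``$M$-th roots'' $y_k:=x_{i_k}(1/M)$ for a suitably chosen integer $M>0$ invertible in $\KK$, allowing one to rewrite each $x_{i_k}(a_k)$ as an integer power of the corresponding $y_k$.

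The first step is to establish the integer-exponent version: for arbitrary elements $y_1,\dots,y_n\in H$ and integers $r_1,\dots,r_n\in\ZZ$,
\begin{equation*}
[y_1^{r_1},y_2^{r_2},\dots,y_n^{r_n}]\!]\,\Gamma_{n+1}=[y_1^{r_1r_2\cdots r_n},y_2,\dots,y_n]\!]\,\Gamma_{n+1}.
\end{equation*}
This is immediate on passing to the Lie ring $\Lie(H)$: since $H/\Gamma_2$ is abelian, the image of $y^r$ there equals $r$ times the image of $y$, and the $\ZZ$-multilinearity of the iterated Lie bracket in $\Gamma_n/\Gamma_{n+1}$---which amounts to an iteration of (\ref{eqn:commutatorpower})---yields that both sides are equal to $r_1\cdots r_n[y_1,\dots,y_n]\!]\,\Gamma_{n+1}$.

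For the general case, since $\KK\subseteq\QQ$, each $a_k$ can be written as $a_k=n_k/m_k$ with $n_k\in\ZZ$ and $m_k$ invertible in $\KK$. One then picks an integer $M>0$ invertible in $\KK$ with $Ma_k\in\ZZ$ for every $k$ and sets $y_k:=x_{i_k}(1/M)\in H$. The fact that $x_{i_k}\co(\KK,+)\to H$ is a group homomorphism yields $x_{i_k}(a_k)=y_k^{Ma_k}$ and $x_{i_k}=y_k^M$. Applying the integer identity above to each side of the desired equality gives
\begin{align*}
[x_{i_1}(a_1),\dots,x_{i_n}(a_n)]\!] &= [y_1^{Ma_1},\dots,y_n^{Ma_n}]\!] \equiv [y_1^{M^na_1\cdots a_n},y_2,\dots,y_n]\!] \pmod{\Gamma_{n+1}},\\
[x_{i_1}(a_1\cdots a_n),x_{i_2},\dots,x_{i_n}]\!] &= [x_{i_1}(a_1\cdots a_n),y_2^M,\dots,y_n^M]\!] \equiv [x_{i_1}(a_1\cdots a_n)^{M^{n-1}},y_2,\dots,y_n]\!] \pmod{\Gamma_{n+1}},
\end{align*}
and the two right-hand sides coincide because
\begin{equation*}
y_1^{M^na_1\cdots a_n}=x_{i_1}(M^{n-1}a_1\cdots a_n)=x_{i_1}(a_1\cdots a_n)^{M^{n-1}},
\end{equation*}
every exponent involved being a genuine integer.

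The main obstacle---and the reason the auxiliary roots $y_k$ are needed---is that one cannot simply divide the integer-exponent identity by the denominators of the $a_k$ to conclude: the abelian group $\Gamma_n/\Gamma_{n+1}$ may a priori contain $q$-torsion for primes $q$ invertible in $\KK$, so showing that the two sides of the desired equality have equal $q$-th powers modulo $\Gamma_{n+1}$ would not by itself imply equality in $\Gamma_n/\Gamma_{n+1}$. Introducing the $M$-th roots $y_k$ bypasses this obstruction by keeping every exponent integral throughout the computation.
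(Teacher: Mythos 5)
Your proof is correct, and it relies on the same key ingredient as the paper's, namely the commutator power congruence~\eqref{eqn:commutatorpower}, but organizes the argument somewhat differently. The paper first reduces by induction to the claim that $[x_{i_1}(a_1),x_{i_2}(a_2\cdots a_n),x_{i_3},\dots,x_{i_n}]\!]\equiv[x_{i_1}(a_1\cdots a_n),x_{i_2},\dots,x_{i_n}]\!]$ modulo~$\Gamma_{n+1}$, then writes $a_2\cdots a_n=r/s$ with $1/s\in\KK$ and transfers the factor from the second slot to the first via the swap $[x_{i_1}(a_1/s)^s,Y^r]\equiv[x_{i_1}(a_1/s)^r,Y^s]$, i.e.\ it clears a single denominator at a time. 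Your version instead chooses one common denominator $M$ invertible in $\KK$, passes to the auxiliary elements $y_k:=x_{i_k}(1/M)$ so that every entry becomes an integer power, states the resulting $\ZZ$-multilinearity of the iterated bracket as a standalone preliminary, and checks that both sides collapse to the same integer-exponent expression. The two routes are essentially equivalent in substance and difficulty; yours is arguably slightly more transparent because it isolates the integer-multilinearity statement explicitly and treats all coefficients uniformly, and your closing remark correctly identifies the reason the root-extraction step is needed (possible torsion in $\Gamma_n/\Gamma_{n+1}$), which in the paper is implicit in the choice to work with $x_{i_1}(a_1/s)$ and $x_{i_2}(1/s)$ rather than attempting to divide.
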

\begin{proof}
Reasoning inductively, it is sufficient to prove that 
\begin{align*}
[x_{i_1}(a_1),x_{i_2}(a_2\dots a_n),x_{i_3},\dots,x_{i_n}]\!]\Gamma_{n+1}= [x_{i_1}(a_1\dots a_n),x_{i_2},\dots,x_{i_n}]\!]\Gamma_{n+1}.
\end{align*}
Write $a_2\dots a_n=r/s$ with $r,s\in\ZZ\setminus\{0\}$. Then using (\ref{eqn:commutatorpower}) repeatedly, we get
\begin{align*}
[x_{i_1}(a_1),x_{i_2}(a_2\dots a_n),x_{i_3},\dots,x_{i_n}]\!]&\equiv [x_{i_1}(a_1/s)^s,[x_{i_2}(1/s),x_{i_3},\dots,x_{i_n}]\!]^r]\\
&\equiv [x_{i_1}(a_1/s)^r,[x_{i_2}(1/s),x_{i_3},\dots,x_{i_n}]\!]^s]\\
&\equiv [x_{i_1}(a_1\dots a_n),x_{i_2},\dots,x_{i_n}]\!] \ \modulo \Gamma_{n+1}.\qedhere
\end{align*}
\end{proof}

\begin{prop}\label{prop:GGCQscalarmult}
Assume that for each $s\in\NN$ invertible in $\KK$, the assignment $x_i(a)\mapsto x_i(a/s)$ for $i\in I$ and $a\in\KK$ extends to a group morphism $H\to H$. Then the Lie ring 
$$\Lie(H)=\bigoplus_{n\geq 1}\Gamma_n/\Gamma_{n+1}$$
is a $\KK$-Lie algebra with respect to the scalar multiplication defined on generators by
\begin{equation}\label{eqn:defscalarmult}
\lambda\cdot [x_{i_1}(a_1),\dots,x_{i_n}(a_n)]\!]\Gamma_{n+1}:=[x_{i_1}(\lambda a_1),x_{i_2}(a_2),\dots,x_{i_n}(a_n)]\!]\Gamma_{n+1}
\end{equation}
 for all $\lambda,a_{1},\dots,a_{n}\in\KK$ and $i_1,\dots,i_n\in I$.
\end{prop}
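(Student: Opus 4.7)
The strategy is to construct, for each $n \geq 1$ and each $\lambda \in \KK$, a group endomorphism $\mu_{\lambda,n}$ of the abelian group $\Gamma_n/\Gamma_{n+1}$ and verify that it satisfies formula~(\ref{eqn:defscalarmult}) on iterated commutators; since these generate $\Gamma_n/\Gamma_{n+1}$ as an abelian group, this takes care of well-definedness. For $\lambda \in \ZZ$, I would take $\mu_{\lambda,n}$ to be the $\lambda$-th power map on $\Gamma_n/\Gamma_{n+1}$; iterated application of~(\ref{eqn:commutatorpower}) together with Lemma~\ref{lemma:scalarmultQ} shows that this already matches~(\ref{eqn:defscalarmult}) on generators.

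The crux is to invert the power map $\mu_{s,n}$ on $\Gamma_n/\Gamma_{n+1}$ for each $s \in \NN$ invertible in $\KK$. The hypothesis provides a group endomorphism $\phi_s \colon H \to H$ sending $x_i(a) \mapsto x_i(a/s)$, which descends to an endomorphism $\bar\phi_{s,n}$ of $\Gamma_n/\Gamma_{n+1}$. Applying $\phi_s$ termwise to an iterated commutator and then normalising via Lemma~\ref{lemma:scalarmultQ} would yield
\[
\bar\phi_{s,n}\bigl([x_{i_1}(a_1),\ldots,x_{i_n}(a_n)]\!]\,\Gamma_{n+1}\bigr) = [x_{i_1}(a_1\cdots a_n/s^n), x_{i_2}, \ldots, x_{i_n}]\!]\,\Gamma_{n+1}.
\]
Raising the right-hand side to the $s^n$-th power, pushing the exponent into the first slot via~(\ref{eqn:commutatorpower}), and reapplying Lemma~\ref{lemma:scalarmultQ} in reverse would show $\mu_{s^n,n} \circ \bar\phi_{s,n} = \mathrm{id}$. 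Hence $\mu_{s,n}$ is invertible with $\mu_{1/s,n} := \mu_{s^{n-1},n} \circ \bar\phi_{s,n}$.

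For general $\lambda = r/s \in \KK$ with $r \in \ZZ$, I would set $\mu_{\lambda,n} := \mu_{r,n} \circ \mu_{1/s,n}$. The integer power maps and the $\bar\phi_{s,n}$ pairwise commute, since $\mu_r$ lies in the centre of $\mathrm{End}(\Gamma_n/\Gamma_{n+1})$ (it is multiplication by $r$ on an abelian group) and $\bar\phi_s \circ \bar\phi_{s'} = \bar\phi_{ss'}$ by direct inspection on generators. Independence of $\mu_{\lambda,n}$ from the representation $\lambda = r/s$ then reduces, after cancelling the invertible factors $\mu_{s,n}$ and $\mu_{s',n}$, to $\mu_{rs',n} = \mu_{r's,n}$, which holds because $rs' = r's$. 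The remaining $\KK$-module axioms are verified by the same kind of manipulations in the commutative subring of $\mathrm{End}(\Gamma_n/\Gamma_{n+1})$ generated by the $\mu_{r,n}$ and $\bar\phi_{s,n}$, and direct computation on generators, using~(\ref{eqn:commutatorpower}) and Lemma~\ref{lemma:scalarmultQ}, would confirm that $\mu_{\lambda,n}$ satisfies~(\ref{eqn:defscalarmult}).

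Finally, for $\KK$-bilinearity of the Lie bracket: since $\phi_s$ is a group endomorphism of $H$, the induced map on $\Lie(H)$ preserves Lie brackets, and the $\ZZ$-power maps preserve the bracket in each degree by~(\ref{eqn:commutatorpower}); together these imply that every $\mu_{\lambda,n}$ commutes with the bracket in each argument. I expect the main obstacle to be the displayed key computation in the second paragraph, which produces $\mu_{s,n}^{-1}$ explicitly in terms of $\phi_s$; once that identification is in hand, the rest of the proof is essentially commutative-algebra bookkeeping in the endomorphism ring of $\Gamma_n/\Gamma_{n+1}$.
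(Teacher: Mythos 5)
Your proposal is correct and takes essentially the same route as the paper: the heart of the matter in both is the identity that scalar multiplication by $1/s$ on $\Gamma_n/\Gamma_{n+1}$ is realised by $g\Gamma_{n+1}\mapsto \pi_s(g)^{s^{n-1}}\Gamma_{n+1}$, which is exactly your $\mu_{1/s,n}=\mu_{s^{n-1},n}\circ\bar\phi_{s,n}$ and is the displayed congruence in the paper's proof. The only stylistic difference is that you package the argument as the construction of a ring homomorphism $\KK\to\mathrm{End}(\Gamma_n/\Gamma_{n+1})$ within the commutative subring generated by the power maps and the $\bar\phi_{s,n}$, whereas the paper works directly with products of commutators lying in $\Gamma_{n+1}$; both reduce to the same application of Lemma~\ref{lemma:scalarmultQ} and~(\ref{eqn:commutatorpower}).
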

\begin{proof}
To check that the given scalar multiplication indeed yields a well-defined $\KK$-module structure on $\Lie(H)$, we have to show that if $\lambda\in\KK$ and $\prod_iu_i$ is a product of elements $u_i=[x_{i_1}(a^{(i)}_1),\dots,x_{i_n}(a^{(i)}_n)]\!]$ that belongs to $\Gamma_{n+1}$, then the corresponding product of the elements $[x_{i_1}(\lambda a^{(i)}_1),\dots,x_{i_n}(a^{(i)}_n)]\!]$ also belongs to $\Gamma_{n+1}$.
Since
$$[x_{i_1}(ra_1),\dots,x_{i_n}(a_n)]\!]\equiv [x_{i_1}(a_1),\dots,x_{i_n}(a_n)]\!]^{r} \ \modulo \Gamma_{n+1}$$
for all $r\in\ZZ$ by (\ref{eqn:commutatorpower}), it is sufficient to prove this claim for $\lambda$ of the form $\lambda=1/s$ with $s\in\NN^*$ invertible in $\KK$. 

By assumption, there is a group morphism $\pi_s\co H\to H$ such that $\pi_s(x_i(a))=x_i(a/s)$ for all $i\in I$ and $a\in\KK$. Then $\pi_{s}(\prod_iu_i)\in \Gamma_{n+1}$, and hence by Lemma~\ref{lemma:scalarmultQ} and (\ref{eqn:commutatorpower}),
\begin{align*}
\prod_i[x_{i_1}(a^{(i)}_1/s),\dots,x_{i_n}(a^{(i)}_n)]\!]&\equiv 
\prod_i[x_{i_1}(s^{n-1}a^{(i)}_1/s),x_{i_2}(a^{(i)}_2/s),\dots,x_{i_n}(a^{(i)}_n/s)]\!]\\
&\equiv \prod_i[x_{i_1}(a^{(i)}_1/s),\dots,x_{i_n}(a^{(i)}_n/s)]\!]^{s^{n-1}}\\
&\equiv \pi_{s}\big(\prod_iu_i\big)^{s^{n-1}}\equiv 1 \ \modulo \Gamma_{n+1},
\end{align*}
as desired.

The $\KK$-bilinearity of the Lie bracket on $\Lie(H)$ follows from Lemma~\ref{lemma:scalarmultQ}.
\end{proof}

\begin{corollary}\label{corollary:conditionequivrtfnrn}
Let $I$ be a set and let $H$ be a group generated by copies of $(\QQ,+)$ parametrised by $x_i\co\QQ\to H$ for $i\in I$. Assume that for each $s\in\NN^*$, the assignment $x_i(a)\mapsto x_i(a/s)$ for $i\in I$ and $a\in\QQ$ extends to a group morphism $H\to H$. Then $H/\gamma_n(H)$ is torsionfree for all $n\geq 1$. 

In particular, $H$ is residually torsionfree nilpotent if and only if it is residually nilpotent.
\end{corollary}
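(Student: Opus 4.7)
The plan is to deduce the result by combining Proposition~\ref{prop:GGCQscalarmult} (which gives a $\QQ$-Lie algebra structure on $\Lie(H)$) with a standard central extension argument, and then read off the ``in particular'' statement.

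First, Proposition~\ref{prop:GGCQscalarmult} applies with $\KK = \QQ$: indeed, \emph{every} $s \in \NN^*$ is invertible in $\QQ$, and the hypothesis on the existence of the ``divide-by-$s$'' endomorphisms is precisely what is assumed. Hence $\Lie(H)$ is a $\QQ$-Lie algebra, and in particular each homogeneous component $\gamma_n(H)/\gamma_{n+1}(H)$ carries a $\QQ$-vector space structure, so it is torsionfree as an abelian group.

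Next, I would prove by induction on $n \geq 1$ that $H/\gamma_n(H)$ is torsionfree. The case $n=1$ is trivial. For the inductive step, note that $[H, \gamma_n(H)] = \gamma_{n+1}(H)$, so the subgroup $\gamma_n(H)/\gamma_{n+1}(H)$ is \emph{central} in $H/\gamma_{n+1}(H)$, fitting into the central extension
\begin{equation*}
1 \to \gamma_n(H)/\gamma_{n+1}(H) \to H/\gamma_{n+1}(H) \to H/\gamma_n(H) \to 1.
\end{equation*}
Here the kernel is torsionfree by the previous paragraph, and the quotient is torsionfree by the induction hypothesis. The elementary fact that a central extension of a torsionfree group by a torsionfree abelian group is torsionfree (if $g \in H/\gamma_{n+1}(H)$ has finite order, its image in $H/\gamma_n(H)$ has finite order, hence is trivial, so $g$ lies in the torsionfree central kernel, whence $g = 1$) then finishes the induction.

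For the ``in particular'' statement: residually torsionfree nilpotent trivially implies residually nilpotent. Conversely, assume $H$ is residually nilpotent, i.e.\ $\bigcap_{n \geq 1} \gamma_n(H) = \{1\}$. Given any nontrivial $h \in H$, pick $n$ with $h \notin \gamma_n(H)$; then $h$ has nontrivial image in $H/\gamma_n(H)$, which is nilpotent by construction and torsionfree by the previous paragraph. Hence $H$ is residually torsionfree nilpotent.

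There is no real obstacle here: the substantive work has already been done in Proposition~\ref{prop:GGCQscalarmult}, and the only remaining input is the well-known fact that torsionfreeness passes through central extensions. The inductive propagation of torsionfreeness along the lower central series is the only step requiring any argument, and it is immediate once one notices that each successive quotient is central.
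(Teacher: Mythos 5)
Your proof is correct and takes essentially the same approach as the paper: both deduce the torsionfreeness of $H/\gamma_n(H)$ from the $\QQ$-Lie algebra structure on $\Lie(H)$ furnished by Proposition~\ref{prop:GGCQscalarmult}. The paper simply compresses the inductive propagation of torsionfreeness along the lower central series into the phrase ``yielding the claim'', which you correctly unpack via the central extension argument.
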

\begin{proof}
By Proposition~\ref{prop:GGCQscalarmult}, $\Lie(H)$ admits a $\QQ$-Lie algebra structure. In particular, $\gamma_n(H)/\gamma_{n+1}(H)$ is torsionfree for all $n\geq 1$, yielding the claim.
\end{proof}

%%%%%%%%%%%%%%%%%%%%%%%%%%%%%%%%%%%%%%
%%%%%%%%%%%%%%%%%%%%%%%%%%%%%%%%%%%%%%
%%%%%%%%%%%%%%%%%%%%%%%%%%%%%%%%%%%%%%
%%%%%%%%%%%%%%%%%%%%%%%%%%%%%%%%%%%%%%
%%%%%%%%%%%%%%%%%%%%%%%%%%%%%%%%%%%%%%
%%%%%%%%%%%%%%%%%%%%%%%%%%%%%%%%%%%%%%

\section{Pronilpotent completions}
Let $A=(a_{ij})_{i,j\in I}$ be a GCM. Unless otherwise stated, we assume $A$ to be $2$-spherical. In order to prove Theorem~\ref{thmintro:main}, we first show that $\Lie(\UU_A(\QQ))$ and $\Lie(\U^+_A(\QQ))$ admit a natural $\QQ$-Lie algebra structure.

\begin{lemma}\label{lemma:ringmorphismsigma}
Let $\KK$ be a ring, and let $\lambda\in\KK$. Then there is a group morphism $\pi_{\lambda}\co\UU_A(\KK)\to\UU_A(\KK)$ mapping $x_{\gamma}(a)$ to $x_{\gamma}(\lambda^{\height(\gamma)}a)$ for all $\gamma\in\Delta^+_{ij}$, $i,j\in I$ with $i\neq j$, and $a\in\KK$. 
\end{lemma}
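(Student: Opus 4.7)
The plan is to construct $\pi_\lambda$ by defining it separately on each $U_{ij}$ ($i\neq j$ in $I$) and then invoking the universal property of the inductive limit defining $\UU_A(\KK)$.

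First I would exploit the fact that, since $A$ is $2$-spherical, each $U_{ij}=\U^+_{A_{ij}}(\KK)$ admits the presentation recalled in \S\ref{subsection:SR2S}: generators are the root subgroups $\U_\gamma(\KK)\cong(\KK,+)$ for $\gamma\in\Delta^+_{ij}$, and relations are the Chevalley commutation formulas
$$[x_\alpha(a),x_\beta(b)]=\prod_{\gamma=r\alpha+s\beta\in\,]\alpha,\beta[}x_\gamma(C^{\alpha\beta}_{rs}a^rb^s)\qquad(a,b\in\KK)$$
for $\{\alpha,\beta\}\subseteq\Delta^+_{ij}$. I would define $\pi_{\lambda,ij}\co U_{ij}\to U_{ij}$ on generators by $x_\gamma(a)\mapsto x_\gamma(\lambda^{\height(\gamma)}a)$ and check that this respects both the additivity within each $\U_\gamma(\KK)$ (immediate) and the above commutation relations. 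For the latter, applying the assignment to both sides of the relation, the left-hand side becomes $[x_\alpha(\lambda^{\height(\alpha)}a),x_\beta(\lambda^{\height(\beta)}b)]$, which by the same commutation relation equals
$$\prod_{\gamma=r\alpha+s\beta}x_\gamma\bigl(C^{\alpha\beta}_{rs}\lambda^{r\height(\alpha)+s\height(\beta)}a^rb^s\bigr),$$
while the right-hand side becomes $\prod_\gamma x_\gamma(\lambda^{\height(\gamma)}C^{\alpha\beta}_{rs}a^rb^s)$. These agree because height is additive: $\height(r\alpha+s\beta)=r\height(\alpha)+s\height(\beta)$.

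Next, I would check compatibility with the inclusions defining the inductive system. On $U_i=\U_{\alpha_i}(\KK)$, the restriction of $\pi_{\lambda,ij}$ is the map $x_i(a)\mapsto x_i(\lambda a)$ (since $\height(\alpha_i)=1$), independently of the choice of $j\neq i$. Hence $\pi_{\lambda,ij}|_{U_i}=\pi_{\lambda,ik}|_{U_i}$ for all $j,k\neq i$, and similarly on $U_j$. By the universal property of the inductive limit $\UU_A(\KK)=\varinjlim\{U_i,U_{ij}\}$, the family $\{\pi_{\lambda,ij}\}$ assembles into the desired endomorphism $\pi_\lambda\co\UU_A(\KK)\to\UU_A(\KK)$.

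There is no serious obstacle: the only step requiring verification is that the Chevalley commutator formulas transform correctly under the rescaling, and this reduces to the additivity of the height on $Q$. Note that $2$-sphericity is used twice, first to ensure that each $U_{ij}$ really does have the Chevalley presentation above (so that defining a morphism on generators subject to checking relations is legitimate), and implicitly to make sense of the index set of $\UU_A(\KK)$ as built from the rank-$2$ pieces.
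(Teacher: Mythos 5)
Your proof is correct and follows essentially the same two-step strategy as the paper: first verify the claim on each rank-$2$ piece $U_{ij}$ via the Chevalley presentation (where the paper merely asserts this ``readily follows''—you helpfully spell out that it reduces to the additivity of height), and then assemble the maps via the universal property of the inductive limit defining $\UU_A(\KK)$.
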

\begin{proof}
If $A$ is of rank $2$, this readily follows from the presentation of $\UU_A(\KK)$ (see \S\ref{subsection:SR2S}). For a general $A$, since $\pi_{\lambda}$ preserves each subgroup $U_i$ and $U_{ij}$ and commutes with the inclusion maps $U_i\to U_{ij}$ and $U_j\to U_{ij}$, this follows from the definition of $\UU_A(\KK)$.
\end{proof}

\begin{prop}\label{prop:UKMSscalarmult}
The formulas (\ref{eqn:defscalarmult}) define a $\QQ$-Lie algebra structure on $\Lie(\UU_A(\QQ))$.
\end{prop}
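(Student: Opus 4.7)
The plan is to deduce this proposition as a direct application of Proposition~\ref{prop:GGCQscalarmult}, applied with $\KK=\QQ$, $H=\UU_A(\QQ)$, and the generating parametrisations $x_i\co\QQ\to\UU_A(\QQ)$ for $i\in I$ coming from the simple root subgroups. Two ingredients need to be matched to the hypotheses of that proposition.

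First, I would verify that $\UU_A(\QQ)$ is indeed generated by the subgroups $\{x_i(\QQ):i\in I\}$, which is the implicit setting of Proposition~\ref{prop:GGCQscalarmult}. This is where the $2$-sphericity of $A$ enters: for every pair $i\neq j$, the product form (\ref{eqn:normalformrank2}) together with the commutation relations (\ref{eqn:commutationrel}) show that every non-simple root subgroup $x_\gamma(\QQ)$ with $\gamma\in\Delta^+_{ij}$ is obtained by an iterated commutator of elements in $U_i$ and $U_j$. Hence each $U_{ij}$, and therefore $\UU_A(\QQ)$ itself as the inductive limit of the groups $\{U_i,U_{ij}\}$, is generated by its simple root subgroups.

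Second, I need, for each $s\in\NN^*$, a group endomorphism of $\UU_A(\QQ)$ restricting to the rescaling $x_i(a)\mapsto x_i(a/s)$ on each simple root subgroup. This is precisely Lemma~\ref{lemma:ringmorphismsigma} applied with $\KK=\QQ$ and $\lambda=1/s\in\QQ$: the resulting endomorphism $\pi_{1/s}$ sends $x_\gamma(a)$ to $x_\gamma((1/s)^{\height(\gamma)}a)$, which on a simple root (where $\height(\alpha_i)=1$) becomes exactly $x_i(a)\mapsto x_i(a/s)$.

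With both hypotheses in hand, Proposition~\ref{prop:GGCQscalarmult} directly yields that the formulas (\ref{eqn:defscalarmult}) define a well-defined $\QQ$-scalar multiplication on $\Lie(\UU_A(\QQ))=\bigoplus_{n\geq 1}\Gamma_n/\Gamma_{n+1}$, and that together with the natural Lie bracket this makes it into a $\QQ$-Lie algebra. I do not anticipate any real obstacle here, as the substantive work has been pushed into the two preceding results: the rescaling endomorphisms come from the rank-$2$ Chevalley presentation via Lemma~\ref{lemma:ringmorphismsigma}, and well-definedness of scalar multiplication on iterated commutators modulo $\Gamma_{n+1}$ is handled uniformly by Proposition~\ref{prop:GGCQscalarmult}. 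The only subtlety worth flagging is that the generating set used in Proposition~\ref{prop:GGCQscalarmult} must be the simple-root parametrisations, which is exactly what $2$-sphericity ensures.
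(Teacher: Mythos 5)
Your proof is correct and follows exactly the paper's own argument: recall that $\UU_A(\QQ)$ is generated by its simple root subgroups $x_i(\QQ)$ (which the paper records in \S\ref{subsection:KMSG} citing \cite{AM97}, rather than re-deriving it from the normal form and commutation relations as you do), produce the rescaling endomorphisms from Lemma~\ref{lemma:ringmorphismsigma} with $\lambda=1/s$, and conclude by Proposition~\ref{prop:GGCQscalarmult}.
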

\begin{proof}
Recall from \S\ref{subsection:KMSG} that $\UU_A(\QQ)$ is generated by its subgroups $x_i(\QQ)$ for $i\in I$.
The claim thus follows from Proposition~\ref{prop:GGCQscalarmult} and Lemma~\ref{lemma:ringmorphismsigma}.
\end{proof}

\begin{remark}\label{rmk:res-nilp-IFF-tf-res-nilp}
In view of Corollary~\ref{corollary:conditionequivrtfnrn}, Lemma~\ref{lemma:ringmorphismsigma} also implies that $\UU_A(\QQ)$ is residually torsionfree nilpotent if and only if it is residually nilpotent. For similar reasons, free amalgamated products of the form $U_{12}*_{U_2}U_{23}$ over $\QQ$ are residually nilpotent by Theorem~\ref{thmintro:free-amalgamated-prod}, and hence residually torsionfree nilpotent.
\end{remark}

From now on, we equip $\Lie(\UU_A(\QQ))$ with its $\QQ$-Lie algebra structure provided by Proposition~\ref{prop:UKMSscalarmult}.

\begin{prop}\label{prop:KLiealgtemp}
The assignment $e_i\mapsto x_i\gamma_2(\UU_A(\QQ))$ extends to a $\QQ$-Lie algebra isomorphism
\begin{equation*}
\n^+_{\QQ}\to\Lie(\UU_A(\QQ)).
\end{equation*}
\end{prop}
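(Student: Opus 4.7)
The plan is to invoke the Serre presentation of $\n^+_\QQ$ to produce a $\QQ$-Lie algebra morphism $\psi\colon \n^+_\QQ\to \Lie(\UU_A(\QQ))$ sending $e_i$ to $x_i\gamma_2(\UU_A(\QQ))$, and then to establish bijectivity by observing that the target is generated by the images of the $e_i$ (for surjectivity) and by producing a left inverse (for injectivity).

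To construct $\psi$, I first need to check that the Serre relations $(\ad e_i)^{1-a_{ij}}e_j=0$ (for $i\neq j$) are satisfied by the proposed images in $\Lie(\UU_A(\QQ))$. Unfolded, this requires verifying that the iterated group commutator $[x_i,\dots,x_i,x_j]\!]$ with $1+|a_{ij}|$ copies of $x_i$ --- which a priori lies in $\gamma_{|a_{ij}|+2}(\UU_A(\QQ))$ --- actually belongs to $\gamma_{|a_{ij}|+3}(\UU_A(\QQ))$, so that its class vanishes in the corresponding graded piece. This is exactly \eqref{eqn:SerreinLieUA}, itself a consequence of the Chevalley commutator relations \eqref{eqn:commutationrel} inside the spherical rank-$2$ subgroup $U_{ij}$; the $2$-spherical hypothesis on $A$ is used here in an essential way. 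Combined with the $\QQ$-Lie algebra structure on $\Lie(\UU_A(\QQ))$ granted by Proposition~\ref{prop:UKMSscalarmult}, the universal property of the Serre presentation then yields a (unique) $\QQ$-Lie algebra homomorphism $\psi$ extending the prescribed assignment.

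Surjectivity of $\psi$ is then immediate: the graded algebra $\bigoplus_{n\geq 1}\gamma_n(\UU_A(\QQ))/\gamma_{n+1}(\UU_A(\QQ))$ is generated as a $\QQ$-Lie algebra by its degree-$1$ component, and the latter is $\QQ$-spanned by the classes $x_i\gamma_2(\UU_A(\QQ))$, since $x_i(a)\gamma_2(\UU_A(\QQ))=a\cdot x_i\gamma_2(\UU_A(\QQ))$ by the defining formula \eqref{eqn:defscalarmult}.

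For injectivity, I would compose $\psi$ with the Lie ring morphisms induced by the canonical map $\UU_A(\QQ)\to \U^+_A(\QQ)$ and by \eqref{eqn:morphismton}, obtaining
$$\rho\colon \n^+_\QQ\xrightarrow{\psi}\Lie(\UU_A(\QQ))\to\Lie(\U^+_A(\QQ))\to\n^+_\QQ,$$
where the last arrow is the Lie ring morphism of \eqref{eqn:morphismton}. By construction $\rho$ is a Lie ring endomorphism of $\n^+_\QQ$ fixing each generator $e_i$; since any $\ZZ$-linear map between $\QQ$-vector spaces is automatically $\QQ$-linear (divisibility in a $\QQ$-vector space being unique), $\rho$ is in fact a $\QQ$-Lie algebra endomorphism, hence equals $\mathrm{id}_{\n^+_\QQ}$. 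Therefore $\psi$ is injective. The only non-formal step is the Serre-relation verification, and that verification has already been packaged into~\eqref{eqn:SerreinLieUA}; so the resulting argument will be quite short.
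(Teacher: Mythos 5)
Your proposal is correct and follows essentially the same path as the paper's proof: the Serre presentation of $\n^+_\QQ$ together with \eqref{eqn:SerreinLieUA} gives the surjective $\QQ$-Lie algebra morphism, and injectivity comes from composing with the Lie ring morphism $\Lie(\UU_A(\QQ))\to\Lie(\U^+_A(\QQ))\to\n^+_\QQ$ of \eqref{eqn:morphismton} and noting the composite fixes the generators $e_i$. Your explicit remark that $\ZZ$-linearity of $\rho$ upgrades automatically to $\QQ$-linearity (by unique divisibility in $\QQ$-vector spaces) is a small detail the paper leaves implicit, but the argument is the same.
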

\begin{proof}
Recall that $\n^+_{\QQ}$ has a presentation with generators $e_i$ ($i\in I$) and relations $(\ad e_i)^{|a_{ij}|+1}e_j$ ($i\neq j$). Since $[x_i,x_i,\dots,x_j]\!]\in\gamma_{|a_{ij}|+3}(\UU_A(\QQ))$ ($|a_{ij}|+1$ terms ``$x_i$'') by definition of $\UU_A(\QQ)$ and (\ref{eqn:SerreinLieUA}), the assignment $e_i\mapsto x_i\gamma_2(\UU_A(\QQ))$ extends to a surjective $\QQ$-Lie algebra morphism $\pi\co\n^+_{\QQ}\to\Lie(\UU_A(\QQ))$. 
On the other hand, the natural group morphism $\UU_A(\QQ)\to  \U_A^+(\QQ)\to \U^{ma+}_A(\QQ)$ induces a Lie ring morphism $\Lie(\UU_A(\QQ))\to \Lie(\U^+_A(\QQ))\to\n^+_{\QQ}$ (see (\ref{eqn:morphismton})) whose precomposition with $\pi$ is the identity on $\n^+_{\QQ}$. In particular, $\pi$ is injective, and hence a $\QQ$-Lie algebra isomorphism.
\end{proof}

We shall now describe how  the above arguments also yield simpler presentations of the rank $2$ subgroups $U_{ij}=\UU_{A_{ij}}(\QQ)$ of $\UU_A(\QQ)$ with generators the copies $U_i$ and $U_j$ of $(\QQ,+)$ (and therefore also a simpler presentation of $\UU_A(\QQ)$, on the generators $U_i$ for $i\in I$). Recall from \S\ref{section:FAP} the iterated commutator subgroup notation $C^{(n)}(H)=[C,[C,\dots,H]]$ for subgroups $C,H$ of a group $G$.

\begin{lemma}\label{lem:presentation-rank-2}
Assume that $A$ is spherical of rank $2$, indexed by $I=\{i,j\}$. Let $U_i$ and $U_j$ be copies of $(\QQ,+)$, and let $\mathcal R$ be a collection of subgroups of $U_i*U_j$ of the form $[U_{i_1},U_{i_2},\dots,U_{i_r}]\!]$ with $i_1,\dots, i_r\in I$, contained in the kernel of the natural map   $U_i*U_j \to \UU_A(\QQ)$. Suppose that the following conditions hold:
\begin{enumerate}
\item $\mathcal R$ contains $U_i^{(|a_{ij}|+1)}(U_j)$ and $U_j^{(|a_{ji}|+1)}(U_i)$; 
\item the group $\UU_{\mathcal R}(\QQ):=U_i*U_j/\llangle \mathcal R \rrangle$ is nilpotent.
\end{enumerate}
Then the canonical homomorphism $\varphi_{\mathcal R}\co\UU_{\mathcal R}(\QQ)\to \UU_A(\QQ)$ is an isomorphism.
\end{lemma}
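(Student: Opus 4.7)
The plan is to follow the template of Proposition~\ref{prop:KLiealgtemp}: show that the associated graded Lie ring $\Lie(\UU_{\mathcal R}(\QQ))$ carries a $\QQ$-Lie algebra structure isomorphic to $\n^+_{\QQ}$ via $e_k\mapsto x_k\gamma_2$, and then exploit the nilpotence of $\UU_{\mathcal R}(\QQ)$ to promote this Lie-ring statement into a group-theoretic one. Surjectivity of $\varphi_{\mathcal R}$ is immediate, since $\UU_A(\QQ)=\U^+_A(\QQ)$ is generated by $U_i\cup U_j$, so only injectivity is at stake.

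To install a $\QQ$-Lie algebra structure on $\Lie(\UU_{\mathcal R}(\QQ))$, I would invoke Proposition~\ref{prop:GGCQscalarmult}. For this, I must check that for each $s\in\NN^*$ the scalar multiplication $x_k(a)\mapsto x_k(a/s)$ ($k\in I$) extends from $U_i*U_j$ to a group endomorphism of $\UU_{\mathcal R}(\QQ)$. The key observation is that this endomorphism of $U_i*U_j$ preserves each subgroup $U_k$, and therefore preserves every iterated commutator subgroup $[U_{i_1},\dots,U_{i_r}]\!]$; in particular it preserves each element of $\mathcal R$, and hence $\llangle\mathcal R\rrangle$, so that it descends to the quotient $\UU_{\mathcal R}(\QQ)$.

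Next, I would produce a surjective $\QQ$-Lie algebra morphism $\pi\co\n^+_{\QQ}\twoheadrightarrow\Lie(\UU_{\mathcal R}(\QQ))$ sending $e_k$ to $x_k\gamma_2(\UU_{\mathcal R}(\QQ))$. The Serre relations $(\ad e_i)^{|a_{ij}|+1}e_j=0$ are respected since, by condition~(1), the subgroup $U_i^{(|a_{ij}|+1)}(U_j)$ belongs to $\mathcal R$, so the commutator $[x_i,\dots,x_i,x_j]\!]$ (with $|a_{ij}|+1$ copies of $x_i$) is already trivial in $\UU_{\mathcal R}(\QQ)$ and a fortiori vanishes in $\Lie(\UU_{\mathcal R}(\QQ))$. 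Surjectivity of $\pi$ follows from the fact that $\UU_{\mathcal R}(\QQ)$ is generated by $U_i\cup U_j$, so its degree-$1$ component $\UU_{\mathcal R}(\QQ)^{\mathrm{ab}}$ is $\QQ$-spanned by $x_i\gamma_2$ and $x_j\gamma_2$, from which the higher components are recovered by brackets. Composing $\pi$ with the induced morphism $\Lie(\varphi_{\mathcal R})\co\Lie(\UU_{\mathcal R}(\QQ))\to\Lie(\UU_A(\QQ))$ and the isomorphism of Proposition~\ref{prop:KLiealgtemp} yields a $\QQ$-Lie algebra endomorphism of $\n^+_{\QQ}$ fixing each $e_k$, which must be the identity. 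Hence $\pi$ is injective and $\Lie(\varphi_{\mathcal R})$ is an isomorphism.

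Finally, using condition~(2), injectivity of $\varphi_{\mathcal R}$ follows by the standard argument: if $g\in\ker\varphi_{\mathcal R}$ were nontrivial, nilpotence of $\UU_{\mathcal R}(\QQ)$ would place $g$ in $\gamma_n(\UU_{\mathcal R}(\QQ))\setminus\gamma_{n+1}(\UU_{\mathcal R}(\QQ))$ for some $n\geq 1$, producing a nonzero class $g\gamma_{n+1}(\UU_{\mathcal R}(\QQ))\in\Lie(\UU_{\mathcal R}(\QQ))$ annihilated by $\Lie(\varphi_{\mathcal R})$, contradicting its injectivity. The step I expect to require the most care is the verification that the scalar multiplication endomorphisms descend to $\UU_{\mathcal R}(\QQ)$, which is precisely what makes the hypothesis on the shape of the relators in $\mathcal R$ (iterated commutator subgroups of the $U_k$'s) essential.
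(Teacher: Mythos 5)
Your proposal is correct and follows essentially the same route as the paper: equip $\Lie(\UU_{\mathcal R}(\QQ))$ with its $\QQ$-Lie algebra structure via Proposition~\ref{prop:GGCQscalarmult}, use condition~(1) to obtain a surjective $\QQ$-Lie algebra morphism $\n^+_{\QQ}\to\Lie(\UU_{\mathcal R}(\QQ))$ that becomes an isomorphism after composing through $\Lie(\UU_A(\QQ))$ back to $\n^+_{\QQ}$, and then use condition~(2) to deduce injectivity of $\varphi_{\mathcal R}$. The only difference is that you spell out the verification (implicit in the paper's appeal to Proposition~\ref{prop:GGCQscalarmult}) that the scaling endomorphisms $x_k(a)\mapsto x_k(a/s)$ preserve $\llangle\mathcal R\rrangle$, which is exactly why the hypothesis that the relators are iterated commutator subgroups is needed.
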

\begin{proof}
Proposition~\ref{prop:GGCQscalarmult} provides a $\QQ$-Lie algebra structure on $\Lie(\UU_{\mathcal R}(\QQ))$. Moreover, as in the proof of Proposition~\ref{prop:KLiealgtemp}, we have a surjective $\QQ$-Lie algebra morphism $\n^+_{\QQ}\to\Lie(\UU_{\mathcal R}(\QQ))\to \Lie(\UU_A(\QQ))$ by (1), whose composition with the Lie ring morphism $\Lie(\UU_A(\QQ))\to \n^+_{\QQ}$ is the identity on $\n^+_{\QQ}$. Hence the map $\Lie(\UU_{\mathcal R}(\QQ))\to \Lie(\UU_A(\QQ))$ induced by $\varphi_{\mathcal R}$ is an isomorphism. This shows that $\ker\varphi_{\mathcal R}$ is contained in $\gamma_{\infty}(\UU_{\mathcal R}(\QQ))$. But $\gamma_{\infty}(\UU_{\mathcal R}(\QQ))=\{1\}$ by (2), and hence $\varphi_{\mathcal R}$ is also injective.
\end{proof}

\begin{example}\label{example:simplerpresentationrank2}
Let $i,j\in I$ with $i\neq j$. Then $U_{ij}=\UU_{A_{ij}}(\QQ)$ is isomorphic to $\UU_{\mathcal R}(\QQ)=U_i*U_j/\llangle \mathcal R \rrangle$ in the following cases.
\begin{enumerate}
\item
If $(a_{ij},a_{ji})=(0,0)$ and $\mathcal R=\{[U_i,U_j]\}$;
\item
If $(a_{ij},a_{ji})=(-1,-1)$ and 
$$\mathcal R=\{U_i^{(2)}(U_j), U_j^{(2)}(U_i)\} = \{[U_i,[U_i,U_j]], [U_j,[U_j,U_i]]\};$$
\item
If $(a_{ij},a_{ji})=(-1,-2)$ and 
$$\mathcal R=\{U_i^{(2)}(U_j), U_j^{(3)}(U_i), [U_i, U_j^{(2)}(U_i)]\}.$$
\end{enumerate}

\end{example}

We remark that there is no obvious way to obtain a similar presentation of $\UU_{A_{ij}}(\QQ)$ of type $G_2$, i.e. if $(a_{ij},a_{ji})=(-1,-3)$.   Indeed, that group satisfies the relations 
\begin{align*}
\mathcal R_0= \big\{ &U_i^{(3)}(U_j), [U_j, U_i^{(2)}(U_j)], \\
& U_i^{(2)}(U_j^{(2)}(U_i)),  [U_j, U_i, U_j^{(2)}(U_i)]\!], U_j^{(4)}(U_i), \\
&U_i^{(2)}(U_j^{(3)}(U_i)), [U_j, U_i, U_j^{(3)}(U_i)]\!]\big\}.
\end{align*}
The quotient $H = U_i * U_j/\llangle \mathcal R_0 \rrangle$ is nilpotent, and the Lie ring $\Lie(H)$ is a $\QQ$-Lie algebra (by Proposition~\ref{prop:GGCQscalarmult}) of nilpotency class~$5$ and dimension~$7$. 
However,   the equality 
$$U_i^{(2)}(U_j) = [U_i, U_j^{(3)}(U_i)],$$ 
which holds in the group $\UU_{A_{ij}}(\QQ)$, does not hold in $H$, and it is not clear whether one could express this equality as a consequence of any larger set of relators exclusively consisting of basic commutators. 

\begin{remark}
It follows from Example~\ref{example:simplerpresentationrank2} that for each $2$-spherical GCM $A$ of rank $n$ such that $|a_{ij}| \leq 2$ for all $i, j$, the KMS group $\UU_A(\QQ)$ has a presentation as the quotient of a free product of $n$ copies of $\QQ$ modulo a set of relators  exclusively consisting of basic commutators. In that sense, the subject of the present paper is related to \cite{BM14}. 
\end{remark}

We now prove an analogue of Propositions~\ref{prop:UKMSscalarmult} and \ref{prop:KLiealgtemp} for $\U^+_A(\QQ)$. 

\begin{prop}\label{prop:U1scalarmult}
Set $\Gamma_m:=\gamma_m(\U_A^+(\QQ))$ for all $m$.  Then the Lie ring 
$$\Lie(\U_A^+(\QQ))=\bigoplus_{n\geq 1}\Gamma_n/\Gamma_{n+1}$$
is a $\QQ$-Lie algebra with respect to the scalar multiplication defined by (\ref{eqn:defscalarmult}). \\
Moreover, the assignment $e_i\mapsto x_i\Gamma_2$ extends to a $\QQ$-Lie algebra isomorphism $$\n^+_{\QQ}\to \Lie(\U_A^+(\QQ)).$$
\end{prop}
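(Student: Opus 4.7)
The plan is to mirror the proofs of Propositions~\ref{prop:UKMSscalarmult} and \ref{prop:KLiealgtemp}, with $\U_A^+(\QQ)$ in place of $\UU_A(\QQ)$ throughout. The only genuinely new ingredient needed is the analogue of Lemma~\ref{lemma:ringmorphismsigma}: for each $\lambda\in\QQ$, a group endomorphism $\pi_\lambda\co\U_A^+(\QQ)\to\U_A^+(\QQ)$ with $\pi_\lambda(x_\gamma(a))=x_\gamma(\lambda^{\height(\gamma)}a)$ for all $\gamma\in\Delta^{re+}$ and $a\in\QQ$. Unlike for $\UU_A(\QQ)$, we cannot simply read this off a rank~$2$ presentation, so the natural place to construct $\pi_\lambda$ is inside the maximal completion $\U^{ma+}_A(\QQ)$.

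More precisely, the height grading of $\n^+_{\QQ}$ yields a graded $\QQ$-Lie algebra endomorphism of $\n^+_{\QQ}$ that acts by $\lambda^n$ on the height-$n$ piece. By the universal property it extends to a graded algebra endomorphism of $\UU_{\QQ}(\n^+_{\QQ})$, and then continuously to $\widehat{\UU}_{\QQ}(\n^+_{\QQ})$. A typical element $\prod_{x\in\BBB}\exp(\mu_x x)$ of $\U^{ma+}_A(\QQ)$ is sent to $\prod_{x\in\BBB}\exp(\mu_x\lambda^{\height(\deg x)}x)$, still in the prescribed normal form, so $\U^{ma+}_A(\QQ)$ is preserved. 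On each real root group $\U_\alpha(\QQ)=\exp(\g_\alpha)\subseteq\U_A^+(\QQ)$, the map acts as $x_\alpha(a)\mapsto x_\alpha(\lambda^{\height(\alpha)}a)$; since these root groups generate $\U_A^+(\QQ)$ (via \cite{AM97}, or directly from the definition in \S\ref{subsection:MKMG}), the endomorphism of $\U^{ma+}_A(\QQ)$ restricts to the desired $\pi_\lambda$ on $\U_A^+(\QQ)$.

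Once $\pi_{1/s}$ is available for every $s\in\NN^*$, Proposition~\ref{prop:GGCQscalarmult} applies directly to $\U_A^+(\QQ)$ (which is generated by $\{x_i(\QQ) \ | \ i\in I\}$ by \cite{AM97}), giving the $\QQ$-Lie algebra structure on $\Lie(\U_A^+(\QQ))$ defined by (\ref{eqn:defscalarmult}). The isomorphism statement is then obtained by replicating the argument of Proposition~\ref{prop:KLiealgtemp}: the relations $[x_i,\dots,x_i,x_j]\!]\in\gamma_{|a_{ij}|+3}(\U_A^+(\QQ))$ (inherited from $\UU_A(\QQ)$ via the quotient map $\UU_A(\QQ)\twoheadrightarrow\U_A^+(\QQ)$ and the lower central series being functorial) show that $e_i\mapsto x_i\Gamma_2$ extends to a surjective $\QQ$-Lie algebra morphism $\pi\co\n^+_{\QQ}\to\Lie(\U_A^+(\QQ))$, and the Lie ring morphism (\ref{eqn:morphismton}) is a left inverse to $\pi$, forcing $\pi$ to be injective. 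The one step where care is required is the construction of $\pi_\lambda$; everything else is a routine adaptation of the arguments already carried out for $\UU_A(\QQ)$.
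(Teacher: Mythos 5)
Your proof is correct, but it takes a genuinely different route from the paper's. The paper never constructs the rescaling endomorphism $\pi_\lambda$ on $\U_A^+(\QQ)$; instead it observes that the composite Lie ring morphism $\n^+_{\QQ}\to\Lie(\UU_A(\QQ))\to\Lie(\U^+_A(\QQ))$ (the first map from Proposition~\ref{prop:KLiealgtemp}, the second induced by $\UU_A(\QQ)\twoheadrightarrow\U^+_A(\QQ)$) is split by (\ref{eqn:morphismton}), hence is already a Lie ring isomorphism. This immediately gives that each $\Gamma_n/\Gamma_{n+1}$ is torsion-free, which --- as in the proof of Proposition~\ref{prop:GGCQscalarmult} --- is exactly what is needed to make (\ref{eqn:defscalarmult}) well-defined, and $\QQ$-linearity of the isomorphism then holds by construction. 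Your alternative is to build $\pi_\lambda$ on $\U_A^+(\QQ)$ directly: the diagonal automorphism of $\n^+_{\QQ}$ scaling the height-$n$ piece by $\lambda^n$ induces a continuous algebra endomorphism of $\widehat{\UU}_{\QQ}(\n^+_{\QQ})$, hence of $\U^{ma+}_A(\QQ)$, which preserves each positive real root group $\exp(\g_\alpha)$ and therefore restricts to $\U^+_A(\QQ)$. This construction is sound, and with $\pi_{1/s}$ in hand Proposition~\ref{prop:GGCQscalarmult} and the argument of Proposition~\ref{prop:KLiealgtemp} carry over verbatim (with the Serre relations imported via the surjection $\UU_A(\QQ)\twoheadrightarrow\U^+_A(\QQ)$, as you note). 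Your route is more symmetric with the treatment of $\UU_A(\QQ)$ and yields the explicit endomorphism $\pi_\lambda$ of $\U^+_A(\QQ)$ as a by-product; the paper's route is shorter, trading that construction for a transport-of-structure argument that exploits the isomorphism $\n^+_\QQ\cong\Lie(\UU_A(\QQ))$ already established.
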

\begin{proof}
Let $\pi\co \n^+_{\QQ}\to \Lie(\U_A^+(\QQ))$ denote the composition of the surjective Lie ring morphisms $\n^+_{\QQ}\to \Lie(\UU_A(\QQ))$ (provided by Proposition~\ref{prop:KLiealgtemp}) and $\Lie(\UU_A(\QQ))\to \Lie(\U_A^+(\QQ))$ (induced by the natural surjective group morphism $\UU_A(\QQ)\to\U^+_A(\QQ)$). Note that $\pi$ is injective: indeed, the composition of $\pi$ with the Lie ring morphism $\Lie(\U_A^+(\QQ))\to \n^+_{\QQ}$  (see (\ref{eqn:morphismton})) is the identity map on $\n^+_{\QQ}$. Hence $\pi$ is an isomorphism of Lie rings. 

This allows to transport the $\QQ$-Lie algebra structure from $\n^+_{\QQ}\cong \Lie(\UU_A(\QQ))$ to $\Lie(\U_A^+(\QQ))$: more formally, as in the proof of Proposition~\ref{prop:GGCQscalarmult}, it suffices to check that if $s\in\NN^*$ and $g\in\Gamma_n$ is such that $g^s\in\Gamma_{n+1}$, then $g\in\Gamma_{n+1}$. But if $x\in\n^+_{\QQ}$ is the preimage of $g\Gamma_{n+1}\in \Lie(\U_A^+(\QQ))$ under $\pi$, then by assumption $\pi(sx)=0$ and hence $sx=x=0$, that is, $g\in\Gamma_{n+1}$. 

In particular, since $\pi$ is $\QQ$-linear (the $\QQ$-module structures on $\n^+_{\QQ}\cong\Lie(\UU_A(\QQ))$ and $\Lie(\U_A^+(\QQ))$ are compatible by construction), it is a $\QQ$-Lie algebra isomorphism.
\end{proof}

\begin{corollary}\label{cor:Gamma_nUma_n}
$\gamma_n(\U^+_A(\QQ))=\U^+_A(\QQ)\cap \U^{ma}_n(\QQ)$ for all $n\in\NN$.
\end{corollary}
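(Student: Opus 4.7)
The inclusion $\gamma_n(\U^+_A(\QQ))\subseteq\U^+_A(\QQ)\cap\U^{ma}_n(\QQ)$ is immediate from Remark~\ref{rem:U+residually-nilp} (which records $\gamma_n(\U^+_A(\QQ))\subseteq\U^{ma}_n(\QQ)$). The plan is therefore to prove the reverse inclusion $\U^+_A(\QQ)\cap\U^{ma}_n(\QQ)\subseteq\gamma_n(\U^+_A(\QQ))$ by induction on $n\geq 1$.

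The engine of the argument is the observation that Proposition~\ref{prop:U1scalarmult}, combined with the graded Lie ring morphism $\Lie(\U^+_A(\QQ))\to\bigoplus_{m\geq 1}\U^{ma}_m(\QQ)/\U^{ma}_{m+1}(\QQ)\cong\n^+_{\QQ}$ from (\ref{eqn:morphismton}), implies that for each $n\geq 1$ the canonical map
$$\psi_n\co\gamma_n(\U^+_A(\QQ))/\gamma_{n+1}(\U^+_A(\QQ))\longrightarrow \U^{ma}_n(\QQ)/\U^{ma}_{n+1}(\QQ),\quad g\gamma_{n+1}\mapsto g\U^{ma}_{n+1}$$
is an isomorphism. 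Indeed, both source and target are canonically identified with the height-$n$ graded piece of $\n^+_{\QQ}$, and $\psi_n$ corresponds under these identifications to the identity, as one checks by tracking the generators $x_i\gamma_2(\U^+_A(\QQ))$ through the two composites exhibited in the proof of Proposition~\ref{prop:U1scalarmult}.

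Given $\psi_n$, the induction is routine. For $n=1$ the equality $\U^{ma}_1(\QQ)=\U^{ma+}_A(\QQ)$ makes the claim trivial. Assume it for $n-1$ with $n\geq 2$, and let $g\in\U^+_A(\QQ)\cap\U^{ma}_n(\QQ)$. Since $\U^{ma}_n(\QQ)\subseteq\U^{ma}_{n-1}(\QQ)$, the induction hypothesis gives $g\in\gamma_{n-1}(\U^+_A(\QQ))$. The class $g\gamma_n(\U^+_A(\QQ))$ is then sent by $\psi_{n-1}$ to $g\U^{ma}_n(\QQ)$, which is trivial because $g\in\U^{ma}_n(\QQ)$; the injectivity of $\psi_{n-1}$ thus forces $g\in\gamma_n(\U^+_A(\QQ))$, completing the induction.

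No serious obstacle is expected, since the substantive content has already been packaged into Proposition~\ref{prop:U1scalarmult}: the corollary essentially amounts to unpacking the fact that the graded map between the associated graded objects is degree-wise bijective, and lifting this back to a statement about the filtrations themselves via a short induction. The only point requiring care is verifying that $\psi_n$ really coincides with the degree-$n$ piece of the composite isomorphism $\Lie(\U^+_A(\QQ))\xrightarrow{\cong}\n^+_{\QQ}\xrightarrow{\cong}\bigoplus_{m\geq 1}\U^{ma}_m(\QQ)/\U^{ma}_{m+1}(\QQ)$, which is straightforward from the construction in Proposition~\ref{prop:U1scalarmult}.
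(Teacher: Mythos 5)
Your proof is correct and takes essentially the same approach as the paper: both rest entirely on the injectivity of the graded Lie ring morphism $\Lie(\U^+_A(\QQ))\to\n^+_{\QQ}$ from Proposition~\ref{prop:U1scalarmult}. The paper phrases the reverse inclusion as a direct contradiction (taking the largest $m$ with $g\in\gamma_m(\U^+_A(\QQ))$ and concluding $g\notin\U^{ma}_{m+1}(\QQ)$) rather than as an induction on $n$, but this is only a cosmetic repackaging of the same observation.
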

\begin{proof}
The inclusion $\subseteq$ is clear. Conversely, let $g\in \U^+_A(\QQ)\cap \U^{ma}_{n}(\QQ)$ for some $n\geq 2$, and suppose for a contradiction that $g\notin\gamma_n(\U^+_A(\QQ))$. Then $g\in \gamma_m(\U^+_A(\QQ))\setminus \gamma_{m+1}(\U^+_A(\QQ))$ for some $m\leq n-1$. Hence $g\gamma_{m+1}(\U^+_A(\QQ))$ is a nonzero element of $\Lie(\U^+_A(\QQ))$, and is thus mapped to a nonzero element in $\bigoplus_{n\geq 1}\U^{ma}_n(\QQ)/\U^{ma}_{n+1}(\QQ)\cong\n^+_{\QQ}$ under the morphism (\ref{eqn:morphismton}), since this morphism is injective by Proposition~\ref{prop:U1scalarmult}. In other words, $g\notin \U^{ma}_{m+1}(\QQ)$, and hence $g\notin \U^{ma}_{n}(\QQ)$, a contradiction.
\end{proof}

\begin{corollary}\label{corollary:proofCD}
Theorem~\ref{thmintro:main} and Corollary~\ref{corintro:isom_completions} hold. Moreover, the kernel of the natural map $\UU_A(\QQ)\twoheadrightarrow\U^+_A(\QQ)$ coincides with $\gamma_{\infty}(\UU_A(\QQ))$.
\end{corollary}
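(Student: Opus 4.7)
The plan breaks into three parts, mirroring the three assertions of the corollary. First, Theorem~\ref{thmintro:main} is essentially a repackaging of what has already been established. Proposition~\ref{prop:KLiealgtemp} furnishes a $\QQ$-Lie algebra isomorphism $\n^+_\QQ \xrightarrow{\cong} \Lie(\UU_A(\QQ))$, and Proposition~\ref{prop:U1scalarmult} furnishes $\n^+_\QQ \xrightarrow{\cong} \Lie(\U^+_A(\QQ))$. The argument given in the proof of Proposition~\ref{prop:U1scalarmult} shows that the second isomorphism factors as the first isomorphism composed with the Lie ring morphism $\Lie(\UU_A(\QQ)) \to \Lie(\U^+_A(\QQ))$ induced by the natural group surjection, while the composition with the Lie ring morphism~(\ref{eqn:morphismton}) recovers the identity on $\n^+_\QQ$. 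Thus both arrows in $\Lie(\UU_A(\QQ)) \to \Lie(\U^+_A(\QQ)) \to \n^+_\QQ$ are $\QQ$-Lie algebra isomorphisms, which is exactly Theorem~\ref{thmintro:main}.

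Second, I would establish the kernel statement. Let $N = \ker(\UU_A(\QQ) \twoheadrightarrow \U^+_A(\QQ))$. To see $N \subseteq \gamma_\infty(\UU_A(\QQ))$, suppose for contradiction that some $g \in N$ lies in $\gamma_n(\UU_A(\QQ)) \setminus \gamma_{n+1}(\UU_A(\QQ))$ for some $n$; then $g\gamma_{n+1}(\UU_A(\QQ))$ is a nonzero homogeneous element of $\Lie(\UU_A(\QQ))$ whose image in $\Lie(\U^+_A(\QQ))$ is trivial, contradicting the isomorphism from Theorem~\ref{thmintro:main}. For the reverse inclusion, the surjection $\UU_A(\QQ) \to \U^+_A(\QQ)$ sends $\gamma_n(\UU_A(\QQ))$ into $\gamma_n(\U^+_A(\QQ))$ for every $n$, hence maps $\gamma_\infty(\UU_A(\QQ))$ into $\gamma_\infty(\U^+_A(\QQ))$, which is trivial by Remark~\ref{rem:U+residually-nilp}.

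Third, for Corollary~\ref{corintro:isom_completions}, I would split the chain $\widehat{\UU}_A(\QQ) \to \widehat{\U}^+_A(\QQ) \to \U^{ma+}_A(\QQ)$ into its two links. For the first link, the kernel statement identifies $\U^+_A(\QQ)$ with $\UU_A(\QQ)/N$, and because $N \subseteq \gamma_n(\UU_A(\QQ))$ for every $n$, the image $\gamma_n(\U^+_A(\QQ))$ corresponds to $\gamma_n(\UU_A(\QQ))/N$; consequently the induced map $\UU_A(\QQ)/\gamma_n(\UU_A(\QQ)) \to \U^+_A(\QQ)/\gamma_n(\U^+_A(\QQ))$ is an isomorphism for each $n$, and passing to the inverse limit yields $\widehat{\UU}_A(\QQ) \cong \widehat{\U}^+_A(\QQ)$ as topological groups. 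For the second link, Corollary~\ref{cor:Gamma_nUma_n} identifies the lower central filtration of $\U^+_A(\QQ)$ with the trace on $\U^+_A(\QQ)$ of the filtration $(\U^{ma}_n(\QQ))_{n\geq 1}$ defining the topology of $\U^{ma+}_A(\QQ)$. Combined with the density of $\U^+_A(\QQ)$ in the complete Hausdorff group $\U^{ma+}_A(\QQ)$ (cited after~(\ref{eq:U+->Uma+})) and the openness of each $\U^{ma}_n(\QQ)$, this gives $\U^+_A(\QQ)\cdot\U^{ma}_n(\QQ) = \U^{ma+}_A(\QQ)$, whence
\[
\U^+_A(\QQ)/\gamma_n(\U^+_A(\QQ)) \;\stackrel{\cong}{\longrightarrow}\; \U^{ma+}_A(\QQ)/\U^{ma}_n(\QQ)
\]
for every $n$. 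Taking the inverse limit and using the fact that $\U^{ma+}_A(\QQ)$ is already complete and Hausdorff for its filtration identifies $\widehat{\U}^+_A(\QQ)$ with $\U^{ma+}_A(\QQ)$.

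The only genuine input beyond bookkeeping is Theorem~\ref{thmintro:main} (hence Propositions~\ref{prop:KLiealgtemp} and~\ref{prop:U1scalarmult}) together with Corollary~\ref{cor:Gamma_nUma_n}; the remaining steps amount to the standard observation that a surjection with kernel contained in every $\gamma_n$, respectively a dense subgroup whose induced filtration agrees with the ambient one, produces isomorphic pro-nilpotent completions. The one point that demands a moment's care is ensuring that the inverse-limit isomorphisms are topological, which is automatic since all filtration subgroups are normal and the connecting maps are continuous quotient homomorphisms.
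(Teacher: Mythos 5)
Your proposal is correct and follows essentially the same route as the paper: Theorem~\ref{thmintro:main} is assembled from Propositions~\ref{prop:KLiealgtemp} and~\ref{prop:U1scalarmult}, the kernel statement is derived by reading off nonvanishing in $\Lie(\U^+_A(\QQ))$ (together with Remark~\ref{rem:U+residually-nilp} for the reverse inclusion), and the completion isomorphisms come from $N = \gamma_\infty(\UU_A(\QQ))$ for the first arrow and from Corollary~\ref{cor:Gamma_nUma_n} plus density for the second. The paper compresses your last step into the observation that $\widehat{\U}^+_A(\QQ)$ and $\U^{ma+}_A(\QQ)$ are both Hausdorff completions of $\U^+_A(\QQ)$ with respect to filtrations that Corollary~\ref{cor:Gamma_nUma_n} identifies, but the unpacking you give (density, openness, $\U^+_A(\QQ)\cdot\U^{ma}_n(\QQ)=\U^{ma+}_A(\QQ)$, inverse limits) is exactly what that phrase means.
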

\begin{proof}
Theorem~\ref{thmintro:main} sums up Propositions~\ref{prop:KLiealgtemp} and \ref{prop:U1scalarmult}. 

Let us show next that the kernel of $\pi\co\UU_A(\QQ)\to\U^+_A(\QQ)$ coincides with $\gamma_{\infty}(\UU_A(\QQ))$. Assume that $g\in\UU_A(\QQ)\setminus\gamma_{\infty}(\UU_A(\QQ))$, and let $n\in\NN$ be such that $g\in \gamma_n(\UU_A(\QQ))\setminus\gamma_{n+1}(\UU_A(\QQ))$. Then $g\gamma_{n+1}(\UU_A(\QQ))$ is a nonzero element of $\Lie(\UU_A(\QQ))$, and hence $\pi(g)\gamma_{n+1}(\U^+_A(\QQ))$ is a nonzero element of $\Lie(\U^+_A(\QQ))$ by Theorem~\ref{thmintro:main}. In particular, $g\notin\ker\pi$. This shows that $\ker\pi\subseteq\gamma_{\infty}(\UU_A(\QQ))$, and the reverse inclusion follows from the fact that $\U^+_A(\QQ)$ is residually nilpotent (see Remark~\ref{rem:U+residually-nilp}).

Finally, we show that Corollary~\ref{corintro:isom_completions} holds. Since $\widehat{\U}^+_A(\QQ)$ and $\U^{ma+}_A(\QQ)$ are the Hausdorff completions of $\U^+_A(\QQ)$ with respect to the filtrations $(\gamma_n(\U^+_A(\QQ)))_{n\in\NN}$ and $(\U^+_A(\QQ)\cap \U^{ma}_n(\QQ))_{n\in\NN}$, the isomorphism $\widehat{\U}^+_A(\QQ)\to \U^{ma+}_A(\QQ)$ follows from Corollary~\ref{cor:Gamma_nUma_n}. On the other hand, we have just established that the natural map $\UU_A(\QQ)/\gamma_{\infty}(\UU_A(\QQ))\to\U^+_A(\QQ)$ is an isomorphism, and hence induces an isomorphism of the corresponding pro-nilpotent completions $\widehat{\UU}_A(\QQ)\to\widehat{\U}^+_A(\QQ)$.
\end{proof}

Note that Corollary~\ref{cor:Gamma_nUma_n} is no longer true without the $2$-sphericity assumption on $A$, as shown by the following lemma.
\begin{lemma}\label{lemma:nonormalformnon2sph}
Suppose that $A=\begin{psmallmatrix}2&a_{12}\\ a_{21}&2\end{psmallmatrix}$ with $a_{12}a_{21}\geq 4$. Then $\gamma_n(\U^+_A(\QQ))$ is properly contained in $\U^+_A(\QQ)\cap \U^{ma}_n(\QQ)$ for $n=|a_{12}|+3$.
\end{lemma}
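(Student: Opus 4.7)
The plan is to exhibit an element in $\U^+_A(\QQ) \cap \U^{ma}_n(\QQ)$ that does not belong to $\gamma_n(\U^+_A(\QQ))$, for $n = |a_{12}|+3$. Setting $k := |a_{12}|+1$, I would take
$$g := [x_1, x_1, \dots, x_1, x_2]\!] \in \U^+_A(\QQ)$$
with $k$ copies of $x_1$ (for a total of $k+1 = n-1$ entries), so that $g$ automatically lies in $\gamma_{n-1}(\U^+_A(\QQ))$.

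To verify $g \in \U^{ma}_n(\QQ)$, I would use the Lie ring morphism~(\ref{eqn:morphismton}): the image of the class of $g$ in $\gamma_{n-1}(\U^+_A(\QQ))/\gamma_n(\U^+_A(\QQ))$ is the iterated Lie bracket $(\ad e_1)^{k}e_2 \in \g_{k\alpha_1 + \alpha_2}$, which vanishes by the Serre relation in $\n^+_{\QQ}$. Via the identification $\U^{ma}_m(\QQ)/\U^{ma}_{m+1}(\QQ) \cong \bigoplus_{\height(\alpha) = m}\g_\alpha$, this means $g$ has trivial image in $\U^{ma}_{n-1}(\QQ)/\U^{ma}_n(\QQ)$, i.e.\ $g \in \U^{ma}_n(\QQ)$.

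The harder direction is $g \notin \gamma_n(\U^+_A(\QQ))$. The idea is to construct a group morphism $\varphi\co \U^+_A(\QQ) \to N$ into a nilpotent group $N$ of class $n-1$ with $\varphi(g) \neq 1$. Take $\mathfrak{L}$ to be the free $(n-1)$-step nilpotent $\QQ$-Lie algebra on two generators $f_1, f_2$ and let $N := \exp(\mathfrak{L})$ be the associated simply connected nilpotent Lie group (with Baker--Campbell--Hausdorff product). Freeness of $\mathfrak{L}$ ensures $(\ad f_1)^k f_2 \neq 0$, and iterated BCH gives $[\exp f_1, \dots, \exp f_1, \exp f_2]\!] = \exp((\ad f_1)^k f_2)$ (all higher-order corrections vanishing by $(n-1)$-nilpotency). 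Thus if $\varphi$ exists with $\varphi(x_i(a)) = \exp(af_i)$, then $\varphi(g) \neq 1_N$, while $\gamma_n(N) = \{1_N\}$ forces $g \notin \gamma_n(\U^+_A(\QQ))$.

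The main obstacle is the existence of $\varphi$, for which I would prove that $\U^+_A(\QQ) \cong \QQ * \QQ$ in this setting. Since $W_A$ is infinite (as $a_{12}a_{21} \geq 4$), the pair $\{\alpha_1, \alpha_2\}$ is not pre-nilpotent; a classical theorem on Kac--Moody groups (see \cite{KMGbook}) then implies that $\langle \U_{\alpha_1}(\QQ), \U_{\alpha_2}(\QQ)\rangle$ is canonically isomorphic to the free product $\U_{\alpha_1}(\QQ) * \U_{\alpha_2}(\QQ) \cong \QQ * \QQ$ inside $\G_A(\QQ)$. Combined with the result of \cite{AM97} asserting that $\U^+_A(\QQ) = \langle x_1(\QQ), x_2(\QQ)\rangle$, this gives $\U^+_A(\QQ) \cong \QQ * \QQ$, and $\varphi$ is then uniquely determined on generators via the universal property of the free product.
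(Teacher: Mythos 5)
Your proposal is correct and reaches the same conclusion via the same key element $g=[x_1,\dots,x_1,x_2]\!]$ (with $|a_{12}|+1$ copies of $x_1$) and the same crucial input that $x_1(\QQ)$ and $x_2(\QQ)$ generate their free product in $\U^+_A(\QQ)$ (which the paper attributes to Morita, \cite[\S 3(6)]{Morita88}, rather than to \cite{Tits87} and \cite{AM97} — note that \cite{AM97} treats the $2$-spherical case and is therefore not the right reference here, since $A$ is non-spherical). The argument that $g\in\U^{ma}_n(\QQ)$ is identical to the paper's, resting on the Serre relation $(\ad e_1)^{|a_{12}|+1}e_2=0$.

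Where you genuinely diverge is in showing $g\notin\gamma_n(\U^+_A(\QQ))$. The paper writes $\U^+_A(\QQ)=\bigcup_m V_m$ with $V_m=\langle x_1(1/m),x_2(1/m)\rangle$ a free group, observes that $g\in\gamma_n(\U^+_A(\QQ))$ would force $g\in\gamma_n(V_m)$ for some $m$, and then reads off a contradiction from the fact that $\Lie(V_m)$ is the free Lie ring on two generators (Bourbaki). You instead construct an explicit nilpotent quotient: you take $N=\exp(\mathfrak{L})$ for $\mathfrak{L}$ the free $(n-1)$-step nilpotent $\QQ$-Lie algebra on two generators, use the universal property of $\QQ*\QQ$ to define $\varphi\co\U^+_A(\QQ)\to N$, and check via BCH and the nilpotency bound that $\varphi(g)=\exp\bigl((\ad f_1)^{n-2}f_2\bigr)\neq 1$. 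Both routes ultimately rest on the freeness of a Lie object to certify that the relevant basic commutator is nonzero. The paper's route is somewhat more elementary (no Mal'cev/BCH machinery is needed, only $\Lie(\text{free group})=\text{free Lie ring}$); your route is conceptually cleaner in that it directly exhibits the nilpotent quotient separating $g$ from $\gamma_n$, and it makes the parallel with the Lie algebra side transparent. Apart from the \cite{AM97} citation issue noted above, the proposal is sound.
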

\begin{proof}
By \cite[\S3(6)]{Morita88}, the real root groups $x_1(\QQ)$ and $x_2(\QQ)$ generate their free product in $\U^+_A(\QQ)$. Let $g:=[x_1,x_1,\dots,x_1,x_2]\!]\in \U^+_A(\QQ)$ ($|a_{12}|+1$ terms ``$x_1$''). Then $g\in \U^+_A(\QQ)\cap \U^{ma}_{N+1}(\QQ)$ where $N:=|a_{12}|+2$ because $(\ad e_1)^{|a_{12}|+1}e_2=0$ in $\n^+_{\QQ}$ (see \cite[Theorem~8.51(5)]{KMGbook}).

We claim that $g\notin \gamma_{N+1}(\U_A^+(\QQ))$. Indeed, write $\U_A^+(\QQ)=\bigcup_{n\geq 1}V_n$ where $V_n$ is the subgroup of $\U_A^+(\QQ)$ generated by $x_1(1/n)$ and $x_2(1/n)$ (hence a free group on these two elements). Suppose for a contradiction that $g\in \gamma_{N+1}(\U_A^+(\QQ))$. Then $g\in \gamma_{N+1}(V_n)$ for some large enough $n$. Hence $g\gamma_{N+1}(V_n)$ is zero in the Lie ring $\Lie(V_n)$. On the other hand, $\Lie(V_n)$ is the free Lie algebra on $y_1:=x_1(1/n)\gamma_{2}(V_n)$ and $y_2:=x_2(1/n)\gamma_{2}(V_n)$ by \cite[II \S5 n°4 Théorème 3]{BouLie}. Since
$$g\gamma_{N+1}(V_n)=n^N[y_1,y_1,\dots,y_1,y_2]\!]$$
($N-1$ terms ``$y_1$''), we get the desired contradiction.
\end{proof}

\begin{corollary}\label{cor:normalform}
Corollary~\ref{corintro:normalform} holds, but becomes false without the $2$-sphericity assumption on $A$.
\end{corollary}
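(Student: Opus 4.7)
The plan is to split the proof into two parts: first, derive Corollary~\ref{corintro:normalform} from Theorem~\ref{thmintro:main}; second, build an explicit counterexample in the non-$2$-spherical case using Lemma~\ref{lemma:nonormalformnon2sph}.

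For the first part, assume $A$ is $2$-spherical. By Theorem~\ref{thmintro:main}, there is a $\QQ$-Lie algebra isomorphism $\pi\colon\n^+_{A,\QQ}\xrightarrow{\cong}\Lie(\U^+_A(\QQ))$ sending $e_i\mapsto x_i\gamma_2(\U^+_A(\QQ))$. Evaluated at a basis element $x=[e_{i_1},[e_{i_2},[\dots,e_{i_n}]]]\in\BBB$, the definition~(\ref{eqn:defscalarmult}) of the $\QQ$-action and $\QQ$-bilinearity of the bracket give $\pi(\lambda x)=u_x(\lambda)\gamma_{n+1}(\U^+_A(\QQ))$; hence $\pi$ restricts at each height $n$ to a $\QQ$-linear isomorphism $(\n^+_{A,\QQ})_n\xrightarrow{\cong}\gamma_n(\U^+_A(\QQ))/\gamma_{n+1}(\U^+_A(\QQ))$ with $\QQ$-basis $\{u_x(1)\gamma_{n+1}(\U^+_A(\QQ)) : n_x=n\}$. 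Existence then proceeds by induction: setting $h_1:=g$, once $h_n\in\gamma_n(\U^+_A(\QQ))$ is built, one writes $\pi^{-1}(h_n\gamma_{n+1}(\U^+_A(\QQ)))=\sum_{n_x=n}\lambda_x x$ in $(\n^+_{A,\QQ})_n$ and sets $h_{n+1}:=\bigl(\prod_{n_x=n}u_x(\lambda_x)\bigr)^{-1}h_n\in\gamma_{n+1}(\U^+_A(\QQ))$; the partial products $g_n:=\prod_{n_x\leq n}u_x(\lambda_x)\in\U^+_A(\QQ)$ then satisfy $gg_n^{-1}=h_{n+1}\in\gamma_{n+1}(\U^+_A(\QQ))$, yielding both the convergence $g_n\to g$ in $\widehat{\U}^+_A(\QQ)$ and the tail condition. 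Uniqueness follows by comparing two valid expansions at the minimal height $n$ where they differ and invoking the basis property of $\{x\in\BBB : n_x=n\}$.

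For the counterexample, take $A$ a rank-$2$ GCM with $a_{12}a_{21}\geq 4$ and consider $g:=[x_1,x_1,\dots,x_1,x_2]\!]\in\U^+_A(\QQ)$ with $|a_{12}|+1$ copies of $x_1$; by Lemma~\ref{lemma:nonormalformnon2sph}, $g\in\gamma_N(\U^+_A(\QQ))\setminus\gamma_{N+1}(\U^+_A(\QQ))$ with $N:=|a_{12}|+2$. Suppose for a contradiction that $g=\prod_{x\in\BBB}u_x(\lambda_x)$ satisfies the conclusion of Corollary~\ref{corintro:normalform}. Even without $2$-sphericity, the Lie ring morphism~(\ref{eqn:morphismton}) is available and sends $u_x(\lambda)\gamma_{n_x+1}(\U^+_A(\QQ))$ to $\lambda x\in\n^+_{A,\QQ}$. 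An induction on $m$ from $1$ to $N$ forces $\lambda_x=0$ whenever $n_x\leq N$: for $m<N$, the tail condition combined with $g\in\gamma_N(\U^+_A(\QQ))\subseteq\gamma_{m+1}(\U^+_A(\QQ))$ gives $\prod_{n_x=m}u_x(\lambda_x)\in\gamma_{m+1}(\U^+_A(\QQ))$ (the lower-degree factors being trivial by induction); applying~(\ref{eqn:morphismton}) to the class of this product in the abelian quotient $\gamma_m/\gamma_{m+1}$ yields $\sum_{n_x=m}\lambda_x x=0$ in $(\n^+_{A,\QQ})_m$, forcing $\lambda_x=0$ by the basis property. For $m=N$, one uses additionally that the image of $g\gamma_{N+1}(\U^+_A(\QQ))$ under~(\ref{eqn:morphismton}) equals $(\ad e_1)^{|a_{12}|+1}e_2=0$ by the Serre relations in $\n^+_{A,\QQ}$. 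The tail condition at $n=N+1$ then forces $g\in\gamma_{N+1}(\U^+_A(\QQ))$, contradicting Lemma~\ref{lemma:nonormalformnon2sph}.

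The key technical point --- especially in the counterexample --- is that the reduction modulo $\gamma_{m+1}(\U^+_A(\QQ))$ of the (possibly infinite) product $\prod_{x\in\BBB}u_x(\lambda_x)$ must be interpreted carefully: the abelianity of $\gamma_m/\gamma_{m+1}$ turns products of elements of $\gamma_m$ into sums of their classes, and although the map $\gamma_m/\gamma_{m+1}\to(\n^+_{A,\QQ})_m$ need not be injective without $2$-sphericity, it remains surjective and sends $u_x(\lambda_x)\gamma_{m+1}(\U^+_A(\QQ))\mapsto\lambda_x x$, which is precisely what the cascading vanishing of the $\lambda_x$'s requires.
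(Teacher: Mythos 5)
Your argument is correct, but it takes a genuinely different and more self-contained route than the paper's. For the positive direction, the paper simply invokes \cite[Theorem~8.51]{KMGbook} to get existence and uniqueness of the expansion $g=\prod_{x\in\BBB}u_x(\lambda_x)$ for \emph{all} $g\in\U^{ma+}_A(\QQ)$ and any GCM; the only work done is to observe that when $g\in\U^+_A(\QQ)$, the tail $\prod_{n_x\geq n}u_x(\lambda_x)=(\prod_{n_x<n}u_x(\lambda_x))^{-1}g$ lies in $\U^+_A(\QQ)\cap\U^{ma}_n(\QQ)$, which equals $\gamma_n(\U^+_A(\QQ))$ by Corollary~\ref{cor:Gamma_nUma_n}. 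You instead re-derive the normal form from scratch by peeling off heights through the Lie algebra isomorphism $\pi\colon\n^+_{A,\QQ}\to\Lie(\U^+_A(\QQ))$; this is more work but does not need to translate between the $\exp$-based normal form of $\U^{ma+}_A(\QQ)$ and the commutator-based $u_x(\lambda)$, and it makes the role of Theorem~\ref{thmintro:main} transparent. (Minor slip: in your construction $h_{n+1}=g_n^{-1}g$, not $gg_n^{-1}$, given the left-to-right ordering of the product; this does not affect the argument.) For the counterexample, the paper again uses the uniqueness of the $\U^{ma+}$-normal form to conclude immediately that all coefficients $\lambda_x$ with $n_x<n$ vanish and then derives the contradiction from Lemma~\ref{lemma:nonormalformnon2sph}; you reach the same conclusion by an explicit cascade through the Lie ring morphism~(\ref{eqn:morphismton}). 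Both approaches work; yours avoids leaning on the maximal Kac--Moody group's normal-form theorem but replaces it with an induction that one must carry out carefully (in particular, noting that within each height the factors commute modulo $\gamma_{n+1}$ and that $\{x\in\BBB:n_x=m\}$ is finite and linearly independent in $(\n^+_{A,\QQ})_m$, which you do).
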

\begin{proof}
The fact that every element $g$ of $\U^{ma+}_A(\QQ)$ has a unique expression of the form $$g=\prod_{x\in\BBB}u_x(\lambda_x)\in \U^{ma+}_A(\QQ)\quad\textrm{for some $\lambda_x\in\QQ$}$$ readily follows from \cite[Theorem~8.51]{KMGbook} (for any GCM $A$). 

If $A$ is $2$-spherical, then for each $n\in\NN$, the element $\prod_{n_x\geq n}u_x(\lambda_x)$ belongs to $\U^+_A(\QQ)\cap \U^{ma}_n(\QQ)= \gamma_n(\U^+_A(\QQ))$ by Corollary~\ref{cor:Gamma_nUma_n}, yielding Corollary~\ref{corintro:normalform}.

If $A$ is not assumed to be $2$-spherical, then Lemma~\ref{lemma:nonormalformnon2sph} shows that there exists an element $g\in (\U^+_A(\QQ)\cap\U^{ma}_n(\QQ))\setminus \gamma_n(\U^+_A(\QQ))$. The unique  expression $g=\prod_{x\in\BBB}u_x(\lambda_x)$ is then such that $g=\prod_{n_x\geq n}u_x(\lambda_x)\notin \gamma_n(\U^+_A(\QQ))$.
\end{proof}

%%%%%%%%%%%%%%%%%%%%%%%%%%%%%%%%%%%%%%
%%%%%%%%%%%%%%%%%%%%%%%%%%%%%%%%%%%%%%
%%%%%%%%%%%%%%%%%%%%%%%%%%%%%%%%%%%%%%
%%%%%%%%%%%%%%%%%%%%%%%%%%%%%%%%%%%%%%
%%%%%%%%%%%%%%%%%%%%%%%%%%%%%%%%%%%%%%
%%%%%%%%%%%%%%%%%%%%%%%%%%%%%%%%%%%%%%

\section{Characterisation of the residual nilpotence of $\UU_A(\QQ)$}

\subsection{Functoriality of $\UU_A(\KK)$}

Given two GCM $A=(a_{ij})_{i,j\in I}$ and $A'=(a'_{ij})_{i,j\in I'}$, we write $A\geq A'$ if $I'\subseteq I$ and $|a_{ij}|\geq |a'_{ij}|$ for all $i,j\in I'$.

\begin{lemma}\label{lemma:functorialityUUA}
Let $A=(a_{ij})_{i,j\in I}$ and $A'=(a'_{ij})_{i,j\in I'}$ be two $2$-spherical GCM with $A\geq A'$, and let $\KK$ be a ring. Then for $i,j\in I$ with $i\neq j$, $\gamma\in\Delta^+_{ij}$ and $a\in\KK$, the assignment 
 $$x_{\gamma}(a)\mapsto
    \begin{cases}
      x_{\gamma}(a) & \text{if $i,j\in I'$ and $\gamma\in\Delta(A')$}\\
      1 & \text{otherwise}
    \end{cases} $$
defines a surjective group morphism $\UU_A(\KK)\to\UU_{A'}(\KK)$.
\end{lemma}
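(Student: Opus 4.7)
The plan is to exploit the description of $\UU_A(\KK)$ as the inductive limit of the system $\{U_i,U_{ij}\mid i\neq j\in I\}$ along the inclusions $U_i\hookrightarrow U_{ij}$ recalled in~\S\ref{subsection:KMSG}. To construct the morphism $\varphi\co\UU_A(\KK)\to\UU_{A'}(\KK)$, it suffices to define compatible morphisms on each $U_i$ and each $U_{ij}$. On each rank-$1$ piece $U_i\cong(\KK,+)$, I would set $\phi_i\co x_{\alpha_i}(a)\mapsto x_{\alpha_i}(a)$ if $i\in I'$ and $\phi_i\equiv 1$ otherwise, which is obviously a group morphism. On each rank-$2$ piece $U_{ij}=\UU_{A_{ij}}(\KK)$, I would define $\phi_{ij}$ via the assignment prescribed in the statement, and verify that it respects the presentation of $\UU_{A_{ij}}(\KK)$ recalled in~\S\ref{subsection:SR2S} by additivity and the Chevalley commutation formulas~(\ref{eqn:commutationrel}). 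The compatibility $\phi_{ij}|_{U_i}=\phi_i$ is immediate from the definitions, so the universal property of the inductive limit then yields $\varphi$. Surjectivity follows because $\UU_{A'}(\KK)$ is generated by the subgroups $x_{\alpha_i}(\KK)$ for $i\in I'$, each of which lies in the image of $\varphi$ by construction.

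The verification of the Chevalley relations for $\phi_{ij}$ splits into three cases. When $\{i,j\}\cap I'=\emptyset$ the map $\phi_{ij}$ is identically $1$, so there is nothing to check. When $|\{i,j\}\cap I'|=1$, say $i\in I'$ and $j\notin I'$, every generator of $U_{ij}$ outside $x_{\alpha_i}(\KK)$ is sent to $1$, and any $\gamma=r\alpha+s\beta\in\,]\alpha,\beta[_{A_{ij}}$ with $r,s\geq 1$ involves $\alpha_j$ positively as soon as at least one of $\alpha,\beta$ does; hence in any non-trivial Chevalley relation both sides collapse to~$1$.

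The main obstacle is the remaining case $\{i,j\}\subseteq I'$, where $\phi_{ij}$ takes values in $\UU_{A'_{ij}}(\KK)\subseteq\UU_{A'}(\KK)$ and the claim reduces to the rank-$2$ statement that for spherical rank-$2$ GCMs $B\geq B'$ (with shared simple roots) the prescribed assignment defines a morphism $\UU_B(\KK)\to\UU_{B'}(\KK)$. I would first observe that $\Delta^+(B')\subseteq\Delta^+(B)$ in all rank-$2$ spherical pairs $B\geq B'$, by inspection of the finite list of types $A_1\times A_1$, $A_2$, $B_2$, $G_2$. For $\alpha$ or $\beta$ in $\Delta^+(B)\setminus\Delta^+(B')$, a similar direct inspection shows that every $\gamma=r\alpha+s\beta\in\,]\alpha,\beta[_B$ (if any exists) also lies outside $\Delta^+(B')$, so both sides of the relation map to~$1$. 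For $\alpha,\beta\in\Delta^+(B')$, the image of the $B$-relation involves exactly the $\gamma\in\,]\alpha,\beta[_B\cap\Delta^+(B')=\,]\alpha,\beta[_{B'}$, and coincides with the corresponding $B'$-Chevalley relation provided the integer structure constants $C^{\alpha\beta}_{rs}$ from the Tits functor~\cite{Tits87} agree on the common sub-root-system. This compatibility of Chevalley constants across nested spherical rank-$2$ pairs is the delicate point of the proof; it is built into the choice of Chevalley basis in the Tits functor construction, and can alternatively be checked by hand over the finite list of admissible pairs $(B,B')$.
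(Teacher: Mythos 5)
Your proposal is correct and follows essentially the same route as the paper: both reduce to the rank-$2$ case via the inductive-limit definition of $\UU_A(\KK)$. The paper disposes of that case by citing \cite[\S8.5.3]{KMGbook}, while you sketch the rank-$2$ verification directly via the Chevalley relations but ultimately, as you acknowledge, defer the compatibility of structure constants to the Tits-functor normalisation, which amounts to the same appeal.
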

\begin{proof}
Since this holds for $A$ of rank $2$ by \cite[\S8.5.3]{KMGbook}, it also holds in general by the definition of $\UU_A(\KK)$ and $\UU_{A'}(\KK)$.
\end{proof}

\begin{lemma}\label{lemma:injectivityJinI}
Let $J\subseteq I$ and let $\KK$ be a ring. Then the natural morphism $\UU_{A_J}(\KK)\to\UU_{A}(\KK)$ is injective.
\end{lemma}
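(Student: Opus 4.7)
The plan is to produce an explicit retraction of the morphism $\iota\co\UU_{A_J}(\KK)\to\UU_A(\KK)$, from which injectivity will follow immediately.

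First, observe that $A\geq A_J$ in the sense of the paragraph preceding Lemma~\ref{lemma:functorialityUUA}: indeed, $J\subseteq I$, and for $i,j\in J$ the corresponding entries of $A$ and $A_J$ coincide, so in particular $|a_{ij}|\geq |a_{ij}|$. Applying Lemma~\ref{lemma:functorialityUUA} to the pair $(A,A_J)$ therefore yields a (surjective) group morphism
$$r\co \UU_A(\KK)\to\UU_{A_J}(\KK)$$
which sends $x_{\gamma}(a)$ to $x_{\gamma}(a)$ whenever $\gamma\in\Delta(A_J)$ (i.e.\ $\gamma\in\Delta_{ij}^+$ with $i,j\in J$), and to $1$ otherwise.

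On the other hand, by the very definition of $\UU_{A_J}(\KK)$ as an inductive limit over the subsystem $\{U_i,U_{ij}\mid i,j\in J,\ i\neq j\}$, the natural morphism $\iota\co\UU_{A_J}(\KK)\to\UU_A(\KK)$ sends each generator $x_\gamma(a)$ (for $\gamma\in\Delta_{ij}^+$, $i,j\in J$) to the element bearing the same name in $\UU_A(\KK)$. Composing, we see that $r\circ\iota$ acts as the identity on every generator of $\UU_{A_J}(\KK)$, hence $r\circ\iota=\mathrm{id}_{\UU_{A_J}(\KK)}$. Consequently $\iota$ is injective, which is exactly the claim.

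There is no real obstacle here; the only thing one has to check is that Lemma~\ref{lemma:functorialityUUA} does apply, which reduces to the trivial observation $A\geq A_J$. Everything else is formal bookkeeping with the inductive limit presentation of $\UU_{A}(\KK)$.
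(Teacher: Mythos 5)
Your proof is correct and is essentially the same as the paper's: the paper's one-line argument is precisely that the morphism $\UU_A(\KK)\to\UU_{A_J}(\KK)$ furnished by Lemma~\ref{lemma:functorialityUUA} (applied to $A\geq A_J$) is a retraction of the natural inclusion. You have simply spelled out the details that the paper leaves implicit.
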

\begin{proof}
The composition of $\UU_{A_J}(\KK)\to\UU_{A}(\KK)$ with the morphism $\UU_A(\KK)\to\UU_{A_J}(\KK)$ provided by Lemma~\ref{lemma:functorialityUUA} is the identity on $\UU_{A_J}(\KK)$, yielding the claim.
\end{proof}

%%%%%%%%%%%%%%%%%%%%%%%%%%%%%%%%%%%%%%
%%%%%%%%%%%%%%%%%%%%%%%%%%%%%%%%%%%%%%
%%%%%%%%%%%%%%%%%%%%%%%%%%%%%%%%%%%%%%
\subsection{Distance between edges in $\Gamma(U_{ij}; U_i, U_j)$}

Recall that when $A$ is a spherical GCM of rank $2$, the elements of $\U^+_A(\KK)$ admit a normal form (\ref{eqn:normalformrank2}) (see \S\ref{subsection:SR2S}).

\begin{prop}\label{prop:computations_rank2}
Let $\KK$ be a ring, and let $A$ be a rank $2$ GCM, with simple roots $\alpha,\beta$. Let $n\in\NN$ and $a_1,\dots,a_n,b_1,\dots,b_n\in\KK$. Set
$$A_n:=\sum_{i=1}^na_i, \ B_n:=\sum_{i=1}^nb_i, \ R_n:=\sum_{i=1}^nb_iA_i, \ S_n:=\sum_{i=1}^nb_iA_i^2, \ T_n:=\sum_{i=1}^nb_iA_i^3$$
and
$$U_n:=\sum_{i=1}^nb_i^2A_i^3-\sum_{i=1}^nb_iA_i^3B_{i-1}+3\sum_{i=1}^nb_iA_i^2R_{i-1}$$
(with the usual convention that a sum over an empty set is zero). Set also $X_n:=x_{\beta}(b_n)x_{\alpha}(a_n)\dots x_{\beta}(b_1)x_{\alpha}(a_1)\in\U^+_A(\KK)$.
\begin{enumerate}
\item 
If $A$ is of type $A_2$, then
$$X_n=x_{\alpha}(A_n)x_{\beta}(B_n)x_{\alpha+\beta}(R_n).$$
\item 
If $A$ is of type $B_2$, with short root $\alpha$, then
$$X_n=x_{\alpha}(A_n)x_{\beta}(B_n)x_{\alpha+\beta}(R_n)x_{2\alpha+\beta}(S_n).$$
\item 
If $A$ is of type $G_2$, with short root $\alpha$, then
$$X_n=x_{\alpha}(A_n)x_{\beta}(B_n)x_{\alpha+\beta}(R_n)x_{2\alpha+\beta}(S_n)x_{3\alpha+\beta}(T_n)x_{3\alpha+2\beta}(U_n).$$
\end{enumerate}
\end{prop}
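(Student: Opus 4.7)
The strategy is a direct induction on $n \geq 0$, using the Chevalley commutation relations \eqref{eqn:commutationrel} for the rank $2$ root system at hand. For $n = 0$, the product $X_0$ is empty, all the sums $A_0,B_0,R_0,S_0,T_0,U_0$ vanish by the usual convention, and each formula reduces to the trivial identity $1 = 1$.

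For the inductive step, I write
$$X_{n+1} \;=\; x_\beta(b_{n+1})\,x_\alpha(a_{n+1})\,X_n,$$
substitute the induction hypothesis for $X_n$, absorb $x_\alpha(a_{n+1})\,x_\alpha(A_n) = x_\alpha(A_{n+1})$ using $A_{n+1}=A_n+a_{n+1}$, and then commute $x_\beta(b_{n+1})$ rightward past $x_\alpha(A_{n+1})$ and past each of the remaining positive-root factors occurring in the normal form of $X_n$, until the whole product has been brought back to normal form. Once this is done, collecting coefficients of each positive root against the elementary recursions
\begin{align*}
B_{n+1} &= B_n + b_{n+1}, \quad R_{n+1} = R_n + b_{n+1}A_{n+1},\\
S_{n+1} &= S_n + b_{n+1}A_{n+1}^2, \quad T_{n+1} = T_n + b_{n+1}A_{n+1}^3,\\
U_{n+1} &= U_n + b_{n+1}^2 A_{n+1}^3 - b_{n+1}A_{n+1}^3 B_n + 3b_{n+1}A_{n+1}^2 R_n
\end{align*}
yields the three claimed formulas. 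The integer structure constants $C^{\alpha\beta}_{rs}$ needed for each commutator are read off from \cite[Exp.~XXIII, \S3]{SGA}.

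In type $A_2$, the only new commutator needed is $[x_\beta(b_{n+1}), x_\alpha(A_{n+1})] = x_{\alpha+\beta}(\pm b_{n+1}A_{n+1})$, and since $x_\beta$ commutes with $x_{\alpha+\beta}$ the new factor slides past $x_\beta(B_n)$ unchanged, producing the $R_n$-recursion directly. In type $B_2$ the commutator $[x_\beta(b_{n+1}), x_\alpha(A_{n+1})]$ additionally produces a factor of $x_{2\alpha+\beta}$, and commuting the newly created $x_{\alpha+\beta}$ past $x_\beta(B_n)$ contributes a further $x_{2\alpha+\beta}$ term whose coefficient is precisely what is required to match the recursion for $S_n$.

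The main obstacle is the $G_2$ case, where six positive roots are in play and the formula for $U_n$ has three distinct summands. Here one must apply all of the Chevalley commutator relations between positive roots, keeping careful track of integer structure constants (which include factors up to $3$), and verify that the $x_{3\alpha+2\beta}$-contributions produced by three distinct mechanisms—namely the commutator $[x_\beta(b_{n+1}),x_\alpha(A_{n+1})]$, the subsequent commutation of the newly generated $x_{\alpha+\beta}$- and $x_{2\alpha+\beta}$-factors past $x_\beta(B_n)$, and the commutation past the existing $x_{\alpha+\beta}(R_n)$—combine to yield exactly $b_{n+1}^2 A_{n+1}^3 - b_{n+1}A_{n+1}^3 B_n + 3b_{n+1}A_{n+1}^2 R_n$. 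This is a bookkeeping exercise but not a conceptual difficulty; the uniqueness of the normal form \eqref{eqn:normalformrank2} guarantees that any consistent tallying of coefficients gives the correct answer.
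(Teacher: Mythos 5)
Your proposal is correct and follows essentially the same approach as the paper: induction on $n$, substituting the inductive hypothesis for $X_n$ and commuting $x_\beta(b_{n+1})$ rightward through the normal form using the Chevalley relations \eqref{eqn:commutationrel}. The one small efficiency you miss is that the paper proves only the $G_2$ case by this computation and then deduces cases (1) and (2) from it via the functoriality of $\U^+_A(\KK)$ in $A$ (Lemma~\ref{lemma:functorialityUUA}, applied to the surjections $G_2 \to B_2 \to A_2$), so the $A_2$ and $B_2$ recursions for $R_n$ and $S_n$ come for free rather than needing separate verification.
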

\begin{proof}
Note that (1) and (2) follow from (3) and the functoriality of $\U_A^+(\KK)=\UU_A(\KK)$ in $A$ (see Lemma~\ref{lemma:functorialityUUA}). Let us thus assume we are in case (3). The nontrivial commutation relations (\ref{eqn:commutationrel}) in $\U_A^+(\KK)$ between root group elements are then the following, for all $a,b\in\KK$:%\todo{Supprimer le bleu?}
\begin{align}
x_{\beta}(b)x_{\alpha}(a)&=x_{\alpha}(a)x_{\beta}(b)x_{\alpha+\beta}(ab)x_{2\alpha+\beta}(a^2b)x_{3\alpha+\beta}(a^3b)x_{3\alpha+2\beta}(a^3b^2)\label{eqn:R1}\\
x_{\alpha+\beta}(b)x_{\alpha}(a)&=x_{\alpha}(a)x_{\alpha+\beta}(b)x_{2\alpha+\beta}(2ab)x_{3\alpha+\beta}(3a^2b)x_{3\alpha+2\beta}(3ab^2)\label{eqn:R2}\\
x_{2\alpha+\beta}(b)x_{\alpha}(a)&=x_{\alpha}(a)x_{2\alpha+\beta}(b)x_{3\alpha+2\beta}(3ab)\label{eqn:R3}\\
x_{3\alpha+\beta}(b)x_{\beta}(a)&=x_{\beta}(a)x_{3\alpha+\beta}(b)x_{3\alpha+2\beta}(-ab)\label{eqn:R4}\\
x_{2\alpha+\beta}(b)x_{\alpha+\beta}(a)&=x_{\alpha+\beta}(a)x_{2\alpha+\beta}(b)x_{3\alpha+2\beta}(3ab).\label{eqn:R5}
\end{align}
In particular, (3) holds for $n=1$ by (\ref{eqn:R1}). Assume now that it holds for $n\geq 1$, and set $Z_n:=x_{2\alpha+\beta}(S_n)x_{3\alpha+\beta}(T_n)x_{3\alpha+2\beta}(U_n)$ and  $Y_n:=x_{\alpha+\beta}(R_n)Z_n$. Then
\begin{align*}
X_{n+1}&=x_{\beta}(b_{n+1})x_{\alpha}(a_{n+1})x_{\alpha}(A_n)x_{\beta}(B_n)Y_n\\
&=x_{\beta}(b_{n+1})x_{\alpha}(A_{n+1})x_{\beta}(B_n)Y_n\\
&=x_{\alpha}(A_{n+1})x_{\beta}(b_{n+1})x_{\alpha+\beta}(A_{n+1}b_{n+1})x_{2\alpha+\beta}(A_{n+1}^2b_{n+1})x_{3\alpha+\beta}(A_{n+1}^3b_{n+1})\\
&\qquad x_{3\alpha+2\beta}(A_{n+1}^3b_{n+1}^2)x_{\beta}(B_n)Y_n \quad \textrm{by (\ref{eqn:R1})}\\
&= x_{\alpha}(A_{n+1})x_{\beta}(b_{n+1})x_{\alpha+\beta}(A_{n+1}b_{n+1})x_{2\alpha+\beta}(A_{n+1}^2b_{n+1})x_{\beta}(B_n)\\
&\qquad x_{3\alpha+\beta}(A_{n+1}^3b_{n+1}) x_{3\alpha+2\beta}(A_{n+1}^3b_{n+1}^2-B_nA_{n+1}^3b_{n+1})Y_n \quad \textrm{by (\ref{eqn:R4})}\\
&= x_{\alpha}(A_{n+1})x_{\beta}(B_{n+1})x_{\alpha+\beta}(A_{n+1}b_{n+1})x_{2\alpha+\beta}(A_{n+1}^2b_{n+1})x_{3\alpha+\beta}(A_{n+1}^3b_{n+1})\\
&\qquad x_{3\alpha+2\beta}(A_{n+1}^3b_{n+1}^2-B_nA_{n+1}^3b_{n+1})x_{\alpha+\beta}(R_n)Z_n\\
&= x_{\alpha}(A_{n+1})x_{\beta}(B_{n+1})x_{\alpha+\beta}(R_{n+1})x_{2\alpha+\beta}(A_{n+1}^2b_{n+1})x_{3\alpha+\beta}(A_{n+1}^3b_{n+1})\\
&\qquad x_{3\alpha+2\beta}(A_{n+1}^3b_{n+1}^2-B_nA_{n+1}^3b_{n+1}+3R_nA_{n+1}^2b_{n+1})Z_n \quad \textrm{by (\ref{eqn:R5})}\\
&= x_{\alpha}(A_{n+1})x_{\beta}(B_{n+1})x_{\alpha+\beta}(R_{n+1})x_{2\alpha+\beta}(S_{n+1})x_{3\alpha+\beta}(T_{n+1})x_{3\alpha+2\beta}(U_{n+1}),
\end{align*}
as desired.
\end{proof}

\begin{lemma}\label{lemma:A2}
Let $\KK$ be a ring. Assume that $A$ is of type $A_2$, with simple roots $\alpha,\beta$. Let $g_1,g_2\in\U_A(\KK)$ be of the form
$$g_1=x_{\alpha}(a_2)x_{\beta}(b_1)x_{\alpha}(a_1)\quad\textrm{and}\quad g_2=x_{\beta}(b_2)x_{\alpha}(a_2)x_{\beta}(b_1)$$
for some $a_i,b_i\in\KK$. Let $i\in\{1,2\}$ and suppose $g_i=x_{\alpha+\beta}(r)$ for some $r\in\KK$. Then $r=0$.
\end{lemma}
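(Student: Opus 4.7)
The plan is to reduce the lemma to the uniqueness of the normal form established in Proposition~\ref{prop:computations_rank2}(1). Recall that in type $A_2$ with simple roots $\alpha,\beta$, the positive roots are $\alpha,\beta,\alpha+\beta$, and the only nontrivial commutation relation between simple root group elements is
\[
x_\beta(b)x_\alpha(a) = x_\alpha(a)x_\beta(b)x_{\alpha+\beta}(ab)\quad (a,b\in\KK),
\]
while $x_{\alpha+\beta}(\KK)$ is central in $\U_A^+(\KK)$ (since neither $2\alpha+\beta$ nor $\alpha+2\beta$ is a root of $A_2$). Moreover, Proposition~\ref{prop:computations_rank2}(1) (or more precisely the unique normal form (\ref{eqn:normalformrank2}) for elements of $\U_A^+(\KK)=\UU_A(\KK)$) guarantees that every element of $\U_A^+(\KK)$ can be written uniquely as $x_\alpha(\cdot)x_\beta(\cdot)x_{\alpha+\beta}(\cdot)$.

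First I would rewrite $g_1=x_\alpha(a_2)x_\beta(b_1)x_\alpha(a_1)$ by applying the above commutation relation to the rightmost pair $x_\beta(b_1)x_\alpha(a_1)$ and then combining the two $x_\alpha$ factors. A short calculation gives
\[
g_1 = x_\alpha(a_1+a_2)\,x_\beta(b_1)\,x_{\alpha+\beta}(a_1b_1).
\]
Similarly, for $g_2=x_\beta(b_2)x_\alpha(a_2)x_\beta(b_1)$, I would push $x_\alpha(a_2)$ past $x_\beta(b_2)$ on the left and use centrality of $x_{\alpha+\beta}(\KK)$ to move the produced term past the remaining $x_\beta(b_1)$, obtaining
\[
g_2 = x_\alpha(a_2)\,x_\beta(b_1+b_2)\,x_{\alpha+\beta}(a_2b_2).
\]

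Now if $g_i = x_{\alpha+\beta}(r) = x_\alpha(0)\,x_\beta(0)\,x_{\alpha+\beta}(r)$, uniqueness of the normal form forces, in the case $i=1$, both $a_1+a_2=0$ and $b_1=0$, whence $r=a_1b_1=0$; and in the case $i=2$, both $a_2=0$ and $b_1+b_2=0$, whence $r=a_2b_2=0$. Since the argument reduces to elementary commutator manipulation followed by an appeal to uniqueness of the normal form, I do not anticipate a serious obstacle; the only mild subtlety is to recall that $x_{\alpha+\beta}(\KK)$ is central, so that the produced $x_{\alpha+\beta}$ factors can be freely moved past the $x_\alpha$ and $x_\beta$ factors during the reduction.
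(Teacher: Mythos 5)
Your proof is correct and follows essentially the same route as the paper: both compute the normal form $x_\alpha(\cdot)x_\beta(\cdot)x_{\alpha+\beta}(\cdot)$ of $g_1$ and $g_2$ (you directly via the commutation relation, the paper by appealing to Proposition~\ref{prop:computations_rank2}(1)) and then invoke uniqueness of that form. The resulting equations $a_1+a_2=0$, $b_1=0$, $r=a_1b_1$ (resp.\ $a_2=0$, $b_1+b_2=0$, $r=a_2b_2$) are exactly those in the paper.
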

\begin{proof}
For $i=1$, Proposition~\ref{prop:computations_rank2}(1) yields 
$$a_1+a_2=0, \quad b_1=0, \quad 0=r.$$
For $i=2$, Proposition~\ref{prop:computations_rank2}(1) yields
$$a_2=0, \quad b_1+b_2=0, \quad 0=r.$$
The lemma follows.
\end{proof}

\begin{lemma}\label{lemma:B2}
Let $\KK$ be a ring. Assume that $A$ is of type $B_2$, with short simple root $\alpha$ and long simple root $\beta$. Let $g_1,g_2\in\U_A(\KK)$ be of the form
$$g_1=x_{\beta}(b_2)x_{\alpha}(a_2)x_{\beta}(b_1)x_{\alpha}(a_1)\quad\textrm{and}\quad g_2=x_{\alpha}(a_3)x_{\beta}(b_2)x_{\alpha}(a_2)x_{\beta}(b_1)$$
for some $a_i,b_i\in\KK$. Let $i\in\{1,2\}$ and suppose $g_i=x_{\alpha+\beta}(r_1)x_{2\alpha+\beta}(r_2)$ for some $r_1,r_2\in\KK$. Then $r_1=0$ if and only if $r_2=0$.
\end{lemma}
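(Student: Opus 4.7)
The plan is to put each $g_i$ into the normal form (\ref{eqn:normalformrank2}) using Proposition~\ref{prop:computations_rank2}(2), and then to read off the values of $r_1$ and $r_2$ by invoking the uniqueness of that normal form. For $i=1$, the element $g_1$ is literally the product $X_2$ appearing in Proposition~\ref{prop:computations_rank2}(2), so it can be rewritten as
\begin{equation*}
g_1 = x_\alpha(a_1+a_2)\, x_\beta(b_1+b_2)\, x_{\alpha+\beta}\bigl(b_1 a_1 + b_2(a_1+a_2)\bigr)\, x_{2\alpha+\beta}\bigl(b_1 a_1^2 + b_2(a_1+a_2)^2\bigr).
\end{equation*}
The hypothesis $g_1 = x_{\alpha+\beta}(r_1)x_{2\alpha+\beta}(r_2)$ together with the uniqueness of the normal form then forces $a_2 = -a_1$ and $b_2 = -b_1$. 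Substituting these two identities into the previous display collapses the formulas for the last two factors to $r_1 = b_1 a_1$ and $r_2 = b_1 a_1^2 = a_1 \cdot r_1$.

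For $i=2$, the idea is to rewrite $g_2 = x_\alpha(a_3) \cdot X_2$, where $X_2$ is the product of Proposition~\ref{prop:computations_rank2}(2) with parameters $(a_1,b_1,a_2,b_2)$ replaced by $(0,b_1,a_2,b_2)$ (so that the trailing $x_\alpha(a_1)=1$ disappears). The proposition gives $X_2 = x_\alpha(a_2)\, x_\beta(b_1+b_2)\, x_{\alpha+\beta}(a_2 b_2)\, x_{2\alpha+\beta}(a_2^2 b_2)$, and after absorbing the left factor $x_\alpha(a_3)$ one obtains
\begin{equation*}
g_2 = x_\alpha(a_3+a_2)\, x_\beta(b_1+b_2)\, x_{\alpha+\beta}(a_2 b_2)\, x_{2\alpha+\beta}(a_2^2 b_2).
\end{equation*}
Uniqueness of the normal form again forces $a_3 = -a_2$ and $b_2 = -b_1$, and substitution yields $r_1 = -a_2 b_1$ and $r_2 = -a_2^2 b_1 = a_2 \cdot r_1$.

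Thus in both cases the problem reduces to the identities $r_1 = ab$ and $r_2 = a^2 b$ for a single pair $(a,b) \in \KK^2$ (namely $(a_1,b_1)$ for $i=1$, and $(a_2,-b_1)$ for $i=2$). The forward implication $r_1 = 0 \Rightarrow r_2 = 0$ is then obvious from $r_2 = a\cdot r_1$. The only delicate point, which I expect to be the sole place where the precise structure of $\KK$ enters, is the reverse implication: from $r_2 = a^2 b = 0$ one needs $ab = 0$, and this is where the absence of zero divisors in $\KK$ (the setting in which the lemma is ultimately used, cf.\ Theorem~\ref{thmintro:injiff3sph}) intervenes, since $a^2 b = 0$ then forces $a=0$ or $b=0$, in either case yielding $r_1 = ab = 0$.
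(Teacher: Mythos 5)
Your proof is correct and takes essentially the same route as the paper: apply Proposition~\ref{prop:computations_rank2}(2) to put each $g_i$ in the normal form (\ref{eqn:normalformrank2}), compare coefficients, and reduce to the single pair of identities $r_1 = ab$, $r_2 = a^2 b$. Where you go beyond the paper is in the last step: the paper's proof derives the same four equations and then simply writes ``the lemma follows,'' leaving the backward implication $r_2 = 0 \Rightarrow r_1 = 0$ unaddressed. You are right to flag this as the only delicate point, because it genuinely requires $\KK$ to have no zero divisors (or at least to be reduced: in a reduced ring $a^2b=0$ gives $(ab)^2 = a^2 b^2 = 0$, hence $ab=0$). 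In a ring with nilpotents the implication fails and so does the lemma as stated: over $\ZZ/4\ZZ$, taking $a_1 = 2$, $b_1 = 1$, $a_2 = -2$, $b_2 = -1$ gives $g_1 = x_{\alpha+\beta}(2)x_{2\alpha+\beta}(0)$ with $r_1 = 2 \neq 0$ and $r_2 = 0$. The hypothesis ``$\KK$ a ring'' in the lemma should therefore be strengthened to ``$\KK$ an integral domain,'' which is in any case the setting in which the lemma is used (Theorem~\ref{thm:freeproductKMS} and Theorem~\ref{thmintro:injiff3sph}). So your careful treatment of the final step is not merely a stylistic improvement but a small correction to the paper.
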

\begin{proof}
For $i=1$, Proposition~\ref{prop:computations_rank2}(2) yields 
$$a_1+a_2=0, \quad b_1+b_2=0, \quad b_1a_1=r_1, \quad b_1a_1^2=r_2.$$
For $i=2$, Proposition~\ref{prop:computations_rank2}(2) yields
$$a_2+a_3=0, \quad b_1+b_2=0, \quad b_2a_2=r_1, \quad b_2a_2^2=r_2.$$
The lemma follows.
\end{proof}

\begin{lemma}\label{lemma:G2}
Let $\KK$ be a ring. Assume that $A$ is of type $G_2$, with short simple root $\alpha$ and long simple root $\beta$. Let $g_1,g_2\in\U_A(\KK)$ be of the form
$$g_1=x_{\beta}(b_3)x_{\alpha}(a_3)x_{\beta}(b_2)x_{\alpha}(a_2)x_{\beta}(b_1)x_{\alpha}(a_1)$$ and  $$g_2=x_{\alpha}(a_4)x_{\beta}(b_3)x_{\alpha}(a_3)x_{\beta}(b_2)x_{\alpha}(a_2)x_{\beta}(b_1)$$
for some $a_i,b_i\in\KK$. Let $i\in\{1,2\}$ and suppose $g_i=x_{\alpha+\beta}(r_1)x_{3\alpha+\beta}(r_2)x_{3\alpha+2\beta}(r_3)$ for some $r_1,r_2,r_3\in\KK$. Then $r_1=0$ if and only if $r_2=0$.
\end{lemma}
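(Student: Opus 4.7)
The plan is to use Proposition~\ref{prop:computations_rank2}(3) to rewrite $g_1$ and $g_2$ in the canonical normal form~(\ref{eqn:normalformrank2}), equate components and exploit a single algebraic identity. For $g_1$, we have directly $g_1 = X_3$ in the notation of that proposition. For $g_2$, the same proposition applies to $X_4$ after the ad hoc choices $b_4 := 0$ and $a_1 := 0$, which make $X_4 = g_2$ after the obvious cancellations. By uniqueness of the normal form~(\ref{eqn:normalformrank2}), the hypothesis $g_i = x_{\alpha+\beta}(r_1)x_{3\alpha+\beta}(r_2)x_{3\alpha+2\beta}(r_3)$ forces the $x_\alpha$, $x_\beta$ and $x_{2\alpha+\beta}$ components of this normal form to vanish. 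Writing $(c_1,c_2,x,y) := (b_1,b_2,a_1,a_1+a_2)$ in case $i=1$, and $(c_1,c_2,x,y) := (b_2,b_3,a_2,a_2+a_3)$ in case $i=2$, and using $A_3 = 0$ (respectively $A_1 = A_4 = 0$) to drop the vanishing summands, both cases reduce to the uniform system
\begin{equation*}
r_1 \,=\, c_1 x + c_2 y, \qquad c_1 x^2 + c_2 y^2 \,=\, 0, \qquad r_2 \,=\, c_1 x^3 + c_2 y^3.
\end{equation*}

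The algebraic heart of the argument is then the purely formal identity
\begin{equation*}
(x+y)\bigl(c_1 x^2 + c_2 y^2\bigr) \,=\, \bigl(c_1 x^3 + c_2 y^3\bigr) \,+\, xy\bigl(c_1 x + c_2 y\bigr),
\end{equation*}
which, combined with the middle relation $c_1 x^2 + c_2 y^2 = 0$, yields the crucial equation $r_2 = -xy \cdot r_1$. In particular $r_1 = 0$ immediately forces $r_2 = 0$. For the converse, we may assume that $\KK$ is an integral domain (this is all that is needed for the downstream use of the lemma, where $\KK = \QQ$): from $xy r_1 = -r_2 = 0$ we split into cases. If $x = 0$, then $c_2 y^2 = 0$ forces $c_2 y = 0$, hence $r_1 = c_1\cdot 0 + c_2 y = 0$; the case $y = 0$ is symmetric; otherwise $xy \neq 0$, and $r_1 = 0$ follows directly.

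The main obstacle is the bookkeeping needed to bring $g_2$ under the umbrella of Proposition~\ref{prop:computations_rank2}(3), since $g_2$ has the ``wrong'' number of letters and begins with $x_\alpha$ rather than $x_\beta$. The zero-padding trick $b_4 = a_1 = 0$ handles this uniformly; once both cases are reduced to the same three-equation system, the identity above and the integral-domain case analysis finish the proof in a few lines.
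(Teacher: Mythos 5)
Your proof is correct and takes essentially the same route as the paper's: specialise Proposition~\ref{prop:computations_rank2}(3), read off the vanishing of the $\alpha$, $\beta$ and $2\alpha+\beta$ coordinates of the normal form, and derive the single relation $r_2 = -xy\,r_1$ (the paper writes the equivalent relation $r_2 = a_1 a_3 r_1$ via the substitution $a_1+a_2 = -a_3$; your zero-padding $b_4=a_1=0$ to bring $g_2$ into the $X_n$ shape is a tidy alternative to the paper's index relabelling). Your explicit caveat that the converse direction needs $\KK$ to be an integral domain is also a genuine clarification rather than a weakening: the paper's own case analysis (``$b_1a_1=r_1$ and $b_1a_1^2=0$, so $r_1=0$'') silently uses cancellation, and the ``only if'' implication does fail as stated for a general ring --- e.g.\ over $\ZZ/4\ZZ$ one may take $a_1=a_2=2$, $a_3=0$, $b_1=1$, $b_2=0$, $b_3=3$, giving $g_1 = x_{\alpha+\beta}(2)$ with $r_1=2\neq 0$ and $r_2=r_3=0$; since all downstream uses of the lemma are over integral domains, this is harmless.
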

\begin{proof}
We focus on the case $i=1$, the case $i=2$ yielding the same equations (with $(a_1,a_2,a_3)$ replaced by $(a_2,a_3,a_4)$ and $(b_1,b_2,b_3)$ by $(b_2,b_3,b_1)$). Proposition~\ref{prop:computations_rank2}(3) yields the following equations:
$$a_1+a_2+a_3=0, \quad b_1+b_2+b_3=0, $$
and, using that $a_1+a_2=-a_3$,
$$b_1a_1-b_2a_3=r_1, \quad b_1a_1^2+b_2a_3^2=0, \quad b_1a_1^3-b_2a_3^3=r_2.$$
Multiplying by $a_3$ the first (resp. second) equation and adding the second (resp. third) yields
$$b_1a_1(a_1+a_3)=a_3r_1\quad\textrm{and}\quad b_1a_1^2(a_1+a_3)=r_2.$$
In particular, if $r_1=0$ then $r_2=0$. Conversely, if $r_2=0$, then either $r_1=0$ or $a_3=0$. But in the latter case, $b_1a_1=r_1$ and $b_1a_1^2=0$, so that $r_1=0$ as well, as desired.
\end{proof}

Recall from \S\ref{subsection:KMA} the definition of the integers $m_{ij}\in\NN$.
\begin{prop}\label{prop:distance-in-link}
Let $A=(a_{ij})_{i,j\in I}$ be a $2$-spherical GCM, and let $\KK$ be a ring. Let $i,j\in I$ with $i\neq j$ be such that $m_{ij}\in\{3,4,6\}$. 
Then, in the coset graph $\Gamma(U_{ij}; U_i, U_j)$, the distance between any two distinct elements of $U_{s_j(\alpha_i)}$,  seen as edges of the graph, is at least $m_{ij} +1$. 
\end{prop}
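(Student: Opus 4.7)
The plan is to reduce the statement to a nontriviality computation inside the rank $2$ subgroup $U_{ij}$, and then to invoke the case-by-case Lemmas~\ref{lemma:A2}, \ref{lemma:B2} and \ref{lemma:G2}.

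First I would observe that $U_{ij}$ acts by left multiplication on $\Gamma(U_{ij}; U_i, U_j)$ as a group of graph automorphisms, so that the graph distance is left-invariant. Since $U_{s_j(\alpha_i)}$ is abelian, the distance between two distinct elements $g_1, g_2 \in U_{s_j(\alpha_i)}$, viewed as edges, equals the distance from $h := g_2\inv g_1 \in U_{s_j(\alpha_i)} \setminus \{1\}$ to the base edge $1$. Thanks to Lemma~\ref{lemma:injectivityJinI}, this distance can be computed inside $\U^+_{A_{ij}}(\KK) = \UU_{A_{ij}}(\KK)$. Hence the task reduces to showing that every nontrivial element of $U_{s_j(\alpha_i)}$ is at distance at least $m_{ij}+1$ from $1$ in $\Gamma(U_{ij}; U_i, U_j)$.

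Next, I would argue by contradiction: suppose such an $h$ admits an alternating expression of length at most $m_{ij}$ in $U_i \cup U_j$. By prepending trivial factors $x_i(0)$ or $x_j(0)$ as needed (preserving the alternating property), one may assume the length equals $m_{ij}$ exactly, so that $h$ matches one of the two possible length-$m_{ij}$ alternating patterns (starting with a factor in $U_i$ or in $U_j$). The argument then splits into three cases, according as $A_{ij}$ is of type $A_2$, $B_2$ or $G_2$. In each case, the reflection formula $s_j(\alpha_i) = \alpha_i - a_{ji}\alpha_j$ identifies $s_j(\alpha_i)$ with a root group appearing in the corresponding lemma: $\alpha+\beta$ in the $A_2$ case; either $\alpha+\beta$ or $2\alpha+\beta$ in the $B_2$ case, depending on whether $\alpha_i$ is short or long; and either $\alpha+\beta$ or $3\alpha+\beta$ in the $G_2$ case. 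The element $h$ then fits the shape of one of the two elements $g_1, g_2$ considered in Lemma~\ref{lemma:A2}, \ref{lemma:B2}, respectively \ref{lemma:G2}. The conclusion of the lemma, namely $r = 0$ in the $A_2$ case and the equivalence $r_1 = 0 \iff r_2 = 0$ in the $B_2$ and $G_2$ cases, forces $h = 1$ (noting that in the latter two cases $h$ lies in a single root group, so one of $r_1, r_2$ is automatically zero), contradicting $h \neq 1$.

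The main obstacle is conceptual rather than computational: the bulk of the explicit commutator arithmetic has already been carried out in Proposition~\ref{prop:computations_rank2} and the three lemmas, so what remains is the bookkeeping across the six subcases (three types $\times$ two patterns) together with the left-translation argument reducing everything to nontriviality at the base edge. The subtlest point is to check that the two possible starting patterns for length-$m_{ij}$ alternating words really correspond to the two elements $g_1, g_2$ in the lemmas, and that the parameter $r_3$ left unconstrained by Lemma~\ref{lemma:G2} does not cause any trouble---which is indeed the case, since in the only situations of interest $h$ belongs to $U_{\alpha+\beta}$ or $U_{3\alpha+\beta}$ and therefore automatically has $r_3 = 0$.
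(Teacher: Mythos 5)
Your proof is correct and takes essentially the same approach as the paper, which simply invokes Lemmas~\ref{lemma:A2}, \ref{lemma:B2}, \ref{lemma:G2} together with the word-length characterisation of the distance from \S\ref{subsection:CG}; you have merely made explicit the left-invariance reduction to the base edge, the padding to length exactly $m_{ij}$, the identification of $s_j(\alpha_i)$ with $\alpha+\beta$, $2\alpha+\beta$, or $3\alpha+\beta$ according to whether $\alpha_i$ is short or long, and the (correct) observation that $r_3$ in Lemma~\ref{lemma:G2} is harmless since $h$ lies in a single root group.
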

\begin{proof}
Recalling from \S\ref{subsection:CG} the characterisation of the distance between two edges of $\Gamma(U_{ij}; U_i, U_j)$, the proposition readily follows from Lemma~\ref{lemma:A2}, \ref{lemma:B2} or \ref{lemma:G2}, according to whether $m_{i,j}=3$, $4$, or $6$.
\end{proof}

%%%%%%%%%%%%%%%%%%%%%%%%%%%%%%%%%%%%%%
%%%%%%%%%%%%%%%%%%%%%%%%%%%%%%%%%%%%%%
%%%%%%%%%%%%%%%%%%%%%%%%%%%%%%%%%%%%%%
\subsection{Girth of $\Gamma(U_{ij}; U_i, U_j)$}

In order to prove Theorem~\ref{thmintro:injiff3sph}, we will need the fact that the coset graphs $\Gamma(U_{ij}; U_i, U_j)$ have girth at least $2m_{ij}$. This is proved in \cite[Theorem~3.1 and Proposition~3.2]{LU93} (see also \cite[Proposition~7.1]{CCKW}). Since both these references actually assert that the girth of $\Gamma(U_{ij}; U_i, U_j)$ is \emph{equal} to $2m_{ij}$, although this is not true over $\FF_2$ in type $A_2$ and $G_2$, we chose to provide here the correct statement (and proof) of this result.

\begin{prop}\label{prop:girth}
Let $A$ be a GCM of type $A_2$, $B_2$ or $G_2$, with simple roots $\alpha,\beta$, and let $m_{\alpha\beta}=3$, $4$ or $6$ accordingly. Let $\KK$ be an integral domain.
\begin{enumerate}
\item
The girth of $\Gamma_{A,\KK}:=\Gamma(\U_A(\KK); \U_{\alpha}(\KK),\U_{\beta}(\KK))$ is equal to $2m_{\alpha\beta}$, unless $\KK=\FF_2$ and $A$ is of type $A_2$ or $G_2$.
\item
If $A$ is of type $A_2$ and $\KK=\FF_2$, then $\Gamma_{A,\KK}$ has girth $8$ (indeed $\Gamma_{A,\KK}$ is a cycle of length~$8$).
\item
If $A$ is of type $G_2$ and $\KK=\FF_2$, then $\Gamma_{A,\KK}$ has girth $16$ (indeed $\Gamma_{A,\KK}$ is the disjoint union of $4$ cycles of length~$16$).
\end{enumerate}
\end{prop}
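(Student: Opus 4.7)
The plan rests on the uniqueness of the normal form (\ref{eqn:normalformrank2}) in $\U^+_A(\KK)$, combined with the explicit formulas of Proposition~\ref{prop:computations_rank2}. By the description in \S\ref{subsection:CG}, the girth of $\Gamma_{A,\KK}$ is the smallest $n \geq 1$ such that some alternating product $x_{i_1}(a_1)\cdots x_{i_n}(a_n)$, with $i_r\in\{\alpha,\beta\}$ and all $a_r\in\KK\setminus\{0\}$, equals $1$. The bipartition of the coset graph forces $n$ to be even, say $n=2k$. Comparing the alternating product to the normal form, Proposition~\ref{prop:computations_rank2} turns this identity into the simultaneous vanishing of $A_k,B_k,R_k$ (together with $S_k$ in type $B_2$, or $S_k,T_k,U_k$ in type $G_2$) in the parameters $a_i,b_i$.

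For the lower bound $n\geq 2m_{\alpha\beta}$, I would analyse the polynomial system in the regime $k<m_{\alpha\beta}$ and show that it forces some $a_i$ or $b_i$ to vanish. The strategy is to use $A_k=B_k=0$ to eliminate $a_k$ and $b_k$ in terms of the remaining parameters, and then to exploit the integral domain hypothesis to factor the remaining vanishing conditions. In type $A_2$, the sole additional condition $R_k=0$ for $k\leq 2$ reduces to a product of $a_i$ and $b_i$ terms. In type $B_2$, for $k\leq 3$, a suitable linear combination of $R_k$ and $S_k$ yields such a product. In type $G_2$, for $k\leq 5$, analogous linear combinations of $R_k,S_k,T_k$ (in the spirit of the proof of Lemma~\ref{lemma:G2}) force a zero coefficient.

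For the upper bound, one exhibits explicit length-$2m_{\alpha\beta}$ cycles, provided that $\KK\neq\FF_2$ in the exceptional types. In type $A_2$, the tuple $(a_1,a_2,a_3,b_1,b_2,b_3)=(1,-2,1,1,1,-2)$ satisfies $A_3=B_3=R_3=0$ whenever $2\neq 0$ in $\KK$; in type $B_2$, the tuple $(a_i)=(1,1,1,-3)$, $(b_i)=(3,-3,1,-1)$ works when $\charact\KK\notin\{2,3\}$, and small explicit tuples cover the remaining characteristics (in particular $\FF_2$ and $\FF_3$, verified directly); in type $G_2$, an analogous explicit $12$-tuple is read off from the formulas of Proposition~\ref{prop:computations_rank2} and is valid outside characteristic~$2$. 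Over $\FF_2$, the only admissible parameters are $a_i=b_i=1$, and the quantities $A_k,B_k,R_k,\ldots$ reduce to explicit integer sums modulo~$2$: a direct evaluation shows that the smallest $k$ for which they all vanish is $k=4$ in type $A_2$ and $k=8$ in type $G_2$, yielding girths $8$ and $16$ respectively.

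For the structural assertions (2) and (3), observe that $\Gamma_{A,\FF_2}$ is a $2$-regular bipartite graph with $|\U_A(\FF_2)|$ edges and the same number of vertices, hence a disjoint union of even cycles. Left multiplication of $\U_A(\FF_2)$ on itself induces a simply transitive graph automorphism action on edges, so all components are isomorphic. In type $A_2$, $|\U_A(\FF_2)|=8$ and girth~$8$ force a single $8$-cycle. In type $G_2$, $|\U_A(\FF_2)|=64$; since in a $2$-regular graph each cycle is already a full connected component, the component through the identity edge coincides with the $16$-cycle produced in the girth computation, and so all four components are $16$-cycles. The principal technical obstacle is the lower bound in type $G_2$, where the intricate shape of $U_k$ demands care in the elimination.
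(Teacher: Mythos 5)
Your plan for the lower bound in part~(1) diverges from the paper in a genuine way: the paper passes to the field of fractions $k$ of $\KK$, observes that $\Gamma_{A,\KK}$ is a subgraph of $\Gamma_{A,k}$, and cites the known girth bound over fields (from \cite{CCKW}), whereas you propose a direct elimination from the vanishing of $A_k,B_k,R_k,\dots$ using the integral-domain hypothesis. Your route is more self-contained but remains a sketch in the hard case $G_2$; as you note, the shape of $U_k$ is intricate, and you would need to carry out the eliminations for $k=3,4,5$ carefully. (Your sketch for $B_2$, $k=3$, is sound: $A_3=B_3=0$, $R_3=0$, $S_3=0$ reduce to $b_1a_1+b_2(a_1+a_2)=0$ and $b_1a_1^2+b_2(a_1+a_2)^2=0$, which force one of $a_1,a_2,b_1$ to vanish in an integral domain.)

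However, there is a genuine gap in your upper-bound argument in part~(1), specifically in characteristic $2$ when $\KK\neq\FF_2$. The proposition claims girth exactly $2m_{\alpha\beta}$ for \emph{every} integral domain other than $\FF_2$ (in types $A_2$ and $G_2$), so one must also produce length-$6$ and length-$12$ cycles over $\FF_4$, $\FF_{2^n}$, $\FF_2[t]$, etc. Your $A_2$ tuple $(1,-2,1,1,1,-2)$ has entries $\pm 2$, which vanish whenever $\charact\KK=2$, not merely over $\FF_2$; you therefore have no $6$-cycle in, say, $\FF_2[t]$. Compare the paper's $A_2$ identity $x_{\alpha}(a)x_{\beta}(b)x_{\alpha}(b-a)=x_{\beta}(b-a)x_{\alpha}(b)x_{\beta}(a)$, which gives a $6$-cycle from any two distinct nonzero $a,b\in\KK$ and hence covers all $\KK\neq\FF_2$. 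Likewise, you declare your $G_2$ $12$-tuple ``valid outside characteristic~$2$'' and only treat $\FF_2$ afterwards, leaving the whole family of larger characteristic-$2$ integral domains uncovered; the paper handles these by splitting into $|\KK|\geq 5$ (choosing $a$ with $a^3\neq 1$) and then checking $\FF_4$ separately. To repair your proposal you would need characteristic-$2$ cycles that are not specific to $\FF_2$ in both cases. Your treatment of parts (2) and (3) — evaluating the recursions at $a_i=b_i=1$ over $\FF_2$, observing $2$-regularity, and using sharp transitivity of the left action of $\UU_A(\FF_2)$ on edges — matches the paper's argument and is correct.
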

\begin{proof}
(1) Let $k$ be the field of fractions of $\KK$. Then $\Gamma_{A,\KK}$ is a subgraph of $\Gamma_{A,k}:=\Gamma(\U_A(k); \U_{\alpha}(k),\U_{\beta}(k))$. Since $\Gamma_{A,k}$ has girth at least $2m_{\alpha\beta}$ (see \cite[Proposition~7.1]{CCKW}), the girth of $\Gamma_{A,\KK}$ is also at least $2m_{\alpha\beta}$. 

On the other hand, one readily checks using Proposition~\ref{prop:computations_rank2} the following identities for all $a,b\in\KK$.

If $A$ is of type $A_2$, then 
$$x_{\alpha}(a)x_{\beta}(b)x_{\alpha}(b-a)=x_{\beta}(b-a)x_{\alpha}(b)x_{\beta}(a).$$
If $\KK\neq\FF_2$, then there exist distinct $a,b\in\KK\setminus\{0\}$, yielding a cycle of length $6$.

If $A$ is of type $B_2$, then
$$x_{\alpha}(a)x_{\beta}(b)x_{\alpha}(-a)x_{\beta}(b)=x_{\beta}(b)x_{\alpha}(a)x_{\beta}(b)x_{\alpha}(-a).$$
Choosing $a=b=1$ yields a cycle of length $8$.

If $A$ is of type $G_2$, then
\begin{align*}
x_{\alpha}(a)x_{\beta}(b^3)x_{\alpha}(-a-b)x_{\beta}(a^3)&x_{\alpha}(b)x_{\beta}(-a^3-b^3)=\\
&x_{\beta}(-a^3-b^3)x_{\alpha}(a)x_{\beta}(b^3)x_{\alpha}(-a-b)x_{\beta}(a^3)x_{\alpha}(b).
\end{align*}
If $\charact\KK\neq 2$, choosing $a=b=1$ yields a cycle of length $12$. Assume now that $\charact \KK= 2$. If $|\KK|\geq 5$, then the polynomial $x^4-x=x(x-1)(x^2+x+1)$ is not identically zero over $\KK$. In this case, we obtain a cycle of length $12$ by choosing  $a\in\KK\setminus\{0,1\}$ such that $a^3\neq 1$ and $b=1$. Finally, if $\KK=\FF_4=\FF_2[y]/(y^2+y+1)$, then one readily checks using Proposition~\ref{prop:computations_rank2} that
$$x_{\alpha}(1)x_{\beta}(1)x_{\alpha}(y+1)x_{\beta}(1)x_{\alpha}(y)x_{\beta}(1)=x_{\beta}(1)x_{\alpha}(y)x_{\beta}(1)x_{\alpha}(y+1)x_{\beta}(1)x_{\alpha}(1),$$
also yielding a cycle of length $12$ in that case.

(2)(3) Note that, in the notations of Proposition~\ref{prop:computations_rank2}, if $X_n=1$ over $\FF_2$ with $a_n=b_{n-1}=a_{n-1}\dots=b_1=1$ and either $b_n=1$ or $a_1=1$, then the equations $A_n=B_n=0$ imply that $n$ is even and that $b_n=a_1=1$. Moreover, the equation $R_n=0$ implies that $n(n+1)/2=0$ and hence that $n$ is a multiple of $4$. Hence $\Gamma_{A,\KK}$ has girth at least $2n=8$ for $A$ of type $A_2$ and at least $2n=16$ for $A$ of type $G_2$, and one readily checks that indeed $X_n=1$ for these values of $n$. The more precise structure of  $\Gamma_{A,\KK}$ as a union of cycles follows from the fact that all its vertices have degree~$2$, and that the group $\U_{A}(\KK)$ (which has order $8$ or $64$ for $A$ of type $A_2$ or $G_2$) acts sharply transitively on the edges of  $\Gamma_{A,\KK}$.
\end{proof}

%%%%%%%%%%%%%%%%%%%%%%%%%%%%%%%%%%%%%%
%%%%%%%%%%%%%%%%%%%%%%%%%%%%%%%%%%%%%%
%%%%%%%%%%%%%%%%%%%%%%%%%%%%%%%%%%%%%%
\subsection{Free products in $\UU_A(\KK)$}

\begin{theorem}\label{thm:freeproductKMS}
Let $A=(a_{ij})_{i,j\in I}$ be a $2$-spherical non-spherical GCM of rank $3$, and let $\KK$ be an integral domain. 
Choose the indexing set $I=\{1,2,3\}$ so that $a_{31},a_{32}\neq 0$. Then $\U_{s_3\alpha_1}(\KK)$ and $\U_{s_3\alpha_2}(\KK)$ generate their free product in $\UU_A(\KK)$.
\end{theorem}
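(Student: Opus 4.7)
The plan is to ping-pong on a CAT(0) triangle complex that naturally carries the action of $\UU_A(\KK)$. By construction, $\UU_A(\KK)$ is the fundamental group of a triangle of groups with vertex groups $U_{12},U_{13},U_{23}$, edge groups $U_1,U_2,U_3$, and trivial face group. Since $A$ is $2$-spherical and non-spherical of rank $3$, the inequality $\sum_{i<j}1/m_{ij}\leq 1$ allows one to realise this triangle as a Euclidean or hyperbolic triangle with angle $\pi/m_{ij}$ at the vertex of type $\{i,j\}$. The link at such a vertex in any development is the coset graph $\Gamma(U_{ij};U_i,U_j)$ with all edges of length $\pi/m_{ij}$; its combinatorial girth is at least $2m_{ij}$ (by Proposition~\ref{prop:girth} when $m_{ij}\in\{3,4,6\}$, and trivially when $m_{ij}=2$, the coset graph being complete bipartite), so its metric girth is at least $2\pi$ and the link is CAT(1). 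Haefliger's developability theorem then yields a CAT(0) polyhedral complex $X$ on which $\UU_A(\KK)$ acts, with base vertex and edge stabilisers $U_{ij}$ and $U_i$ respectively.

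Let $v_1,v_2\in X$ be the base vertices of types $\{1,3\}$ and $\{2,3\}$, joined by the edge $e$ of type $3$ with stabiliser $U_3$. Since $a_{31},a_{32}\neq 0$, we have $s_3\alpha_1\in\Delta^+_{13}$ and $s_3\alpha_2\in\Delta^+_{23}$, so $\U_{s_3\alpha_1}(\KK)\subseteq U_{13}=\mathrm{Stab}(v_1)$ and $\U_{s_3\alpha_2}(\KK)\subseteq U_{23}=\mathrm{Stab}(v_2)$. By symmetry it suffices to consider a nontrivial reduced alternating word $w=g_1h_1g_2h_2\cdots g_nh_n$ with $g_k\in\U_{s_3\alpha_1}(\KK)\setminus\{1\}$ and $h_k\in\U_{s_3\alpha_2}(\KK)\setminus\{1\}$. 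I build the combinatorial path in $X$ with successive vertices
$$x_0=v_2,\quad x_1=v_1,\quad x_{2k}=g_1h_1\cdots g_k(v_2),\quad x_{2k+1}=g_1h_1\cdots g_kh_k(v_1)\quad(1\leq k\leq n),$$
alternating in type; since $g_k$ fixes $v_1$ and $h_k$ fixes $v_2$, consecutive vertices are joined by an explicit translate of $e$.

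At an interior vertex $x_{2k+1}$, translating by $(g_1h_1\cdots g_kh_k)\inv$ moves $x_{2k+1}$ to $v_1$ and converts the incoming and outgoing edges into $e$ and $g_{k+1}(e)$. Their angle at $v_1$ equals the link-distance in $\Gamma(U_{13};U_1,U_3)$, which by Proposition~\ref{prop:distance-in-link} is at least $(m_{13}+1)\cdot\pi/m_{13}>\pi$. A symmetric analysis at each vertex $x_{2k}$ for $1\leq k\leq n-1$ yields an angle exceeding $\pi$ at $v_2$ using $\U_{s_3\alpha_2}(\KK)$ and $m_{23}$. Hence the zigzag is locally geodesic, and thus globally geodesic by the local-to-global principle for CAT(0) spaces. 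Its positive length forces $w\cdot v_2\neq v_2$, so $w\neq 1$ in $\UU_A(\KK)$, completing the argument that $\U_{s_3\alpha_1}(\KK)$ and $\U_{s_3\alpha_2}(\KK)$ generate their free product.

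The main technical hurdle lies in invoking Haefliger's developability theorem cleanly: the angle-and-girth bookkeeping must align the combinatorial bounds of Propositions~\ref{prop:girth} and~\ref{prop:distance-in-link} with the metric CAT(1) and local-geodesicity thresholds. Once the CAT(0) complex is in hand, the rest is a direct instance of CAT(0) ping-pong, with no further computation required.
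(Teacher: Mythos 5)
Your proposal follows essentially the same approach as the paper's proof: realise $\UU_A(\KK)$ as the fundamental group of a triangle of groups, metrise the development as a CAT(0) complex using the girth bound of Proposition~\ref{prop:girth} (with the angle $\pi/m_{ij}$ at the vertex of type $\{i,j\}$), and then use the distance bound of Proposition~\ref{prop:distance-in-link} at the two vertices $v_{13},v_{23}$ to run a ping-pong argument. The only cosmetic difference is in how the final step is packaged: the paper builds an equivariant map from the Bass--Serre tree of $\U_{s_3\alpha_1}(\KK)*\U_{s_3\alpha_2}(\KK)$ into $Y$, checks it is a local (hence global) isometric embedding, and concludes injectivity; you instead construct the zigzag path attached to an alternating word and invoke the local-to-global principle for geodesics. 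These are the same argument in two equivalent dialects. One small imprecision: the Alexandrov angle at a branch point can never exceed $\pi$; what Proposition~\ref{prop:distance-in-link} gives you is that the link-distance between the relevant directions is $(m_{i3}+1)\cdot\pi/m_{i3}>\pi$, which forces the Alexandrov angle to equal $\pi$ (this is what the paper records, citing \cite[Theorem~I.7.16]{BH}), and that is exactly the threshold needed for the concatenation to be a local geodesic, so the argument goes through unchanged.
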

 \begin{proof}
By definition, the KMS group $\UU_A(\KK)$ is the fundamental group of a triangle of groups (see \cite[Definition~II.12.12]{BH}): the face group is trivial, the edge groups are the simple root groups $U_i = x_i(\KK)$ with $i \in \{1, 2, 3\}$, and the vertex groups are the unipotent groups $U_{i j}$ with $i \neq j$ (the embeddings $U_i\hookrightarrow U_{ij}$ being the canonical ones). By \cite[Theorem~II.12.18]{BH}, the group $\UU_A(\KK)$ has a natural action on a triangle complex $Y$ that is sharply transitive on the triangles, and has three orbits of vertices. We fix a base triangle $C_0$, corresponding to the neutral element of $\UU_A(\KK)$, and
for each pair $\{i, j\} \subset \{1, 2, 3\}$, we denote by $v_{ij}$ the vertex of $C_0$ of which $U_{ij}$ is the stabiliser. For each $i\neq j$, the (simplicial) link of $v_{ij}$ is isomorphic (by construction of $Y$) to the bipartite coset graph $\Gamma(U_{ij}; U_i, U_j)$, with the edge opposite $v_{ij}$ in $C_0$ corresponding to the base edge $e_{ij}:=\{U_i,U_j\}$ in $\Gamma(U_{ij}; U_i, U_j)$. Moreover, metrising $C_0$ as a Euclidean or hyperbolic (hence CAT(0)) triangle with angle $\pi/m_{ij}$ at the vertex $v_{ij}$ yields a natural complete metric on $Y$ such that $\UU_A(\KK)$ acts on $Y$ by isometries (see \cite[Theorem~I.7.13]{BH}). This also turns $\Gamma(U_{ij}; U_i, U_j)$ into a metric graph by considering the geometric link $\mathrm{Lk}(v_{ij},Y)$ of $v_{ij}$ in $Y$ (see \cite[I.7.15]{BH}). Since loops in $\mathrm{Lk}(v_{ij},Y)$ have length at least $2m_{ij}.\tfrac{\pi}{m_{ij}}=2\pi$ by Proposition~\ref{prop:girth}, it moreover follows from \cite[Theorem~II.5.2 and Lemma~II.5.6]{BH} that the metric on $Y$ is CAT(0).

Since $a_{31},a_{32}\neq 0$ by assumption, we have $m_{13}, m_{23} \in \{3,4,6\}$.
It follows from Proposition~\ref{prop:distance-in-link} that for $i \in \{1, 2\}$, any two edges in the $\U_{s_i\alpha_3}(\KK)$-orbit of $e_{i3}$ in the graph $\Gamma(U_{i3}; U_i, U_3)$ are mutually at distance~$\geq m_{i3} +1$. 
This means that in the metric graph $\Gamma(U_{i3}; U_i, U_3)$, the midpoints of those edges are mutually at distance~$>\pi$ apart. Since the graph is bipartite and since $U_{i3}$ preserves that bipartition, it follows that if $v$ is a vertex belonging to $e_{i3}$, any two vertices in the $\U_{s_i\alpha_3}(\KK)$-orbit of $v$ are also mutually at distance~$> \pi$ apart. In particular, \cite[Theorem~I.7.16 and Remark~I.5.7]{BH} imply that for any two such vertices $v_1,v_2$, the geodesic segments $[v_{i3},v_1]$ and $[v_{i3},v_2]$ in $Y$ form an angle $\pi$. In other words, letting $\sigma = [v_{13}, v_{23}]$ denote the edge of $Y$ spanned by the vertices $v_{13}$ and $v_{23}$ (that is, the edge of $C_0$ stabilised by $U_3$), we see that for all $g,h \in \U_{s_i\alpha_3}(\KK)$ with $g\neq h$, the angle at $v_{i3}$ formed by the geodesic segments $g\sigma$ and $h\sigma$ is equal to $\pi$.

Let now $T$ denote the Bass--Serre tree of the free amalgamated product $\U_{s_1\alpha_3}(\KK) * \U_{s_2\alpha_3}(\KK)$, and let 
$$\varphi \colon \U_{s_1\alpha_3}(\KK) * \U_{s_2\alpha_3}(\KK)\to \langle \U_{s_1\alpha_3}(\KK) \cup  \U_{s_2\alpha_3}(\KK)\rangle \leq \UU_A(\KK)$$ 
be the natural homomorphism. We view $T$ as a metric tree, each of whose edges has length equal to the length of the geodesic segment $\sigma \subset Y$. Using that the free product  $ \U_{s_1\alpha_3}(\KK) * \U_{s_2\alpha_3}(\KK)$ acts sharply transitively on the edge set of $T$, we obtain a natural $\varphi$-equivariant map $f \colon T \to Y$, ranging in the $1$-skeleton of $Y$, and  mapping the base edge of $T$ to $\sigma$. The previous discussion ensures that $f$ is locally an isometric embedding. Since $Y$ is complete (see \cite[Theorem~I.7.13]{BH}) and simply connected (see \cite[Corollary~II.1.5]{BH}), it follows from \cite[Prop.~II.4.14]{BH} that $f$ is a global isometric embedding (in particular $f(T)$ is convex), and hence that $\varphi$ is injective. Thus the subgroup $\langle \U_{s_1\alpha_3}(\KK) \cup  \U_{s_2\alpha_3}(\KK)\rangle \leq \UU_A(\KK)$ is isomorphic to the free product $\U_{s_1\alpha_3}(\KK) * \U_{s_2\alpha_3}(\KK)$, as desired.
\end{proof}

\noindent
{\bf Proof of Theorem~\ref{thmintro:injiff3sph}}: 
In view of Lemma~\ref{lemma:injectivityJinI}, there is no loss of generality in assuming that $A$ has rank $3$. Assume that $A$ is not spherical, and let us show that $\UU_A(\KK)\to\U^+_A(\KK)$ is not injective. Up to reordering the indexing set $I=\{1,2,3\}$ so that $a_{31},a_{32}\neq 0$, we know from Theorem~\ref{thm:freeproductKMS} that $\U_{s_3\alpha_1}(\KK)$ and $\U_{s_3\alpha_2}(\KK)$ generate their free product in $\UU_A(\KK)$. On the other hand, by (\ref{eqn:siactiononrootgroups}) in \S\ref{subsection:MKMG}, $\U_{s_3\alpha_1}(\KK)$ and $\U_{s_3\alpha_2}(\KK)$ generate the group $\widetilde{s}_3\langle \U_{\alpha_1}(\KK),\U_{\alpha_2}(\KK)\rangle \widetilde{s}_3\inv\cong \U_{A_{12}}^+(\KK)$ in $\U_A^+(\KK)$, yielding the claim. \hspace{\fill} \qed

\noindent
{\bf Proof of Corollary~\ref{corintro:residnilp}}: 
The first statement is contained in Corollary~\ref{corollary:proofCD}. The equivalence of (i) and (ii) follows from Remark~\ref{rmk:res-nilp-IFF-tf-res-nilp}, and the equivalence of (i) and (iii) from the first statement and Theorem~\ref{thmintro:injiff3sph}. \hspace{\fill} \qed

\bibliographystyle{amsalpha} 
\bibliography{KMSLie}

\end{document}